\newtheorem{maintheorem}{Theorem}
\newtheorem{maindefi}{Definition}										
\newtheorem{theorem}{Theorem}[section]
\newtheorem{lemma}[theorem]{Lemma}
\newtheorem{proposition}[theorem]{Proposition}
\newtheorem{corollary}[theorem]{Corollary} 
\newtheorem*{thmdo}{Theorem \ref{diffop}}
\theoremstyle{definition}
\newtheorem{definition}[theorem]{Definition}
\newtheorem{remark}[theorem]{Remark}
\newcommand{\calF}{\mathcal{F}}
\newcommand{\calG}{\mathcal{G}}
\newcommand{\calK}{\mathcal{K}}
\newcommand{\calL}{\mathcal{L}}
\newcommand{\calM}{\mathcal{M}}
\newcommand{\calP}{\mathcal{P}}
\newcommand{\calS}{\mathcal{S}}
\newcommand{\bZ}{\mathbb{Z}}
\newcommand{\bH}{\mathbb{H}}
\newcommand{\DD}{\displaystyle}
\newcommand{\vardel}{\partial}
\newcommand{\Moduli}{\overline{\mathcal{M}}}
\newcommand{\UF}{\overline{\mathcal{U}}}
\renewcommand{\tilde}{\widetilde}
\title[]{Witten's Conjecture and Recursions for $\kappa$ Classes }
\author{Vance Blankers}
\address{Department of Mathematics, Colorado State University, Fort Collins, Colorado 80523-1874}
\email{{blankers@mail.colostate.edu }}
\thanks{}
\author{Renzo Cavalieri }
\address{Department of Mathematics, Colorado State University, Fort Collins, Colorado 80523-1874}
\email{{renzo@math.colostate.edu}}
\thanks{R.C. acknowledges support from Simons Foundation Collaboration Grant 420720}
\subjclass[2010]{14N10,14N35}
\begin{document}
\allowdisplaybreaks

\begin{abstract} 
We construct a countable number of differential operators $\hat{L}_n$ that annihilate a generating function  for  intersection numbers of $\kappa$ classes on $\Moduli_g$ (the $\kappa$-potential).
This produces recursions among intersection numbers of $\kappa$ classes which determine all such numbers  from a single initial condition.
The starting point of the work is a combinatorial formula relating intersecion numbers of $\psi$ and $\kappa$ classes. Such a formula produces an exponential differential operator acting on the Gromov-Witten potential to produce the $\kappa$-potential; after restricting to a hyperplane, we have an explicit change of variables relating the two generating functions, and we conjugate the ``classical'' Virasoro operators to obtain the operators $\hat{L}_n$.
\end{abstract}
\maketitle






\section*{Introduction}




\subsection*{Main result}
We state immediately  the main result of this manuscript, and refer the reader to the next section in the introduction for a more leisurely discussion leading to it.
\begin{maindefi} \label{kavira}
For $n= 0,1$,  $\hat{L}_n$ denotes the  differential operator:

\begin{align}
\hat{L}_{0}  &= -\frac{3}{2} \vardel_{{p}_0}+  \sum_{m=0}^\infty  mp_m\vardel_{{p}_m} +\frac{1}{16}, \nonumber\\
\hat{L}_{1}  & = -\frac{15}{4} \vardel_{{p}_1} +    \sum_{m=0}^\infty  m(m+4)p_m\vardel_{{p}_{m+1}} -  \sum_{l,m=0}^\infty  lmp_lp_m\vardel_{{p}_{m+l+1}} \nonumber\\
 & + \frac{(ue^{p_0})^2 }{8} \left( \sum_{m=0}^\infty \left(S_{m+2}({\mathbf{p}})- S_{m+2}(2{\mathbf{p}}) \right)  \vardel_{ {p}_m}
  + \sum_{l,m=0}^\infty   S_{l+1}({\mathbf{p}}) S_{m+1}({\mathbf{p}}) \vardel_{{p}_l} \vardel_{{p}_m}\right); \nonumber 
 \end{align} 
   for all  $n\geq 2$,
 \begin{align} 
  \hat{L}_n &= \sum_{d=0}^{n+1} \alpha_{n,d} \left( -\sum_{m=0}^\infty \left[B_d (q_1, \ldots, q_d)\right]_{z^m}\vardel_{{p}_{m+n}} \right) \nonumber \\
  & +\frac{(ue^{p_0})^2}{2}\left[\left(\sum_{i=0}^{n-1}\frac{(2i+1)!!(2n-2i-1)!!}{2^{n+1}}\right) \left(\sum_{m=0}^\infty    S_{m}(2{\mathbf{p}})  \vardel_{{p}_{m+n-3}}\right)\right. \nonumber\\
 & + \left.\sum_{i=0}^{n-1}\frac{(2i+1)!!(2n-2i-1)!!}{2^{n+1}} \sum_{m,l=0}^\infty S_{m}({\mathbf{p}})  S_{l}({\mathbf{p}}) \vardel_{ {p}_{m+n-2-i}} \vardel_{ {p}_{l+i-1}}  \right],
\end{align}
where $\mathbf{p} = (p_1, p_2, p_3, \ldots)$ denotes a countable list of variables, $S_i$ is the $i$-th elementary Schur polynomial (Definition \ref{def:sch}), $B_d$ denotes the $d$-th Bell polynomial (Definition \ref{def:bell}); the symbols $q_i = - \sum_{k} k^i p_kz^k$ and 
$\alpha_{n,d}  = \left[\prod_{i=0}^n \left(x+i+\frac{3}{2}\right)\right]_{x^d}$.
\end{maindefi}

Let $\calK(p_0, \mathbf{p})$  denote a generating function for top intersection numbers of $\kappa$ classes (Definition \ref{def:kapot}).

\begin{maintheorem}\label{mainthm}
For all $n\geq 0$, we have $$\hat{L}_n(e^\calK)=0.$$
 The vanishing of the coefficients of  monomials in $\hat{L}_n(e^\calK)$ gives  recursive relations among  intersection numbers of $\kappa$ classes; the collection  of all such recursions uniquely determines $\calK$ from the initial condition $\vardel_{p_0}\calK_{|(p_0,\mathbf{p})=(0,\mathbf{0})} = 1/24$.
\end{maintheorem}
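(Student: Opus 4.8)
The plan is to obtain the operators $\hat{L}_n$ by conjugating the classical Virasoro operators $L_n$ that annihilate $e^F$, where $F$ is the Witten--Kontsevich $\psi$-class potential, by the change of variables dictated by the combinatorial $\psi$-$\kappa$ correspondence. The first step is to recall (from the earlier section referenced in the abstract) the precise combinatorial formula expressing $\kappa$-intersection numbers on $\Moduli_g$ as a signed sum of $\psi$-intersection numbers; packaged into generating functions this says that $e^{\calK}$ is obtained from $e^F$ by an explicit exponential differential operator, and — after restricting to the hyperplane that kills the non-$\kappa$ directions — this becomes an honest polynomial/power-series change of variables $\mathbf{t} = \mathbf{t}(p_0, \mathbf{p})$, where the $\mathbf{t}$ are the KdV times. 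The Bell polynomials $B_d$, the Schur polynomials $S_i$, and the substitutions $q_i = -\sum_k k^i p_k z^k$ appearing in Definition \ref{kavira} are exactly what one gets by writing this change of variables out explicitly, so the first task is to nail down that substitution and verify it has the stated form.

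Second, I would recall the classical Virasoro constraints $L_n(e^F) = 0$ for $n \geq -1$ in the normalization matching the potential being used (one must be careful about the dilaton/string shifts and the factor conventions so that the $\tfrac{3}{2}$, $\tfrac{15}{4}$, $\tfrac{1}{16}$, and the products $\prod_{i=0}^n (x+i+\tfrac{3}{2})$ come out right). Then the key computation is the chain rule: if $\Phi$ denotes the change of variables, then for any differential operator $L$ annihilating $e^F$, the operator $\hat{L} := \Phi_* L \Phi_*^{-1}$ annihilates $\Phi_*(e^F) = e^{\calK}$. Carrying this out requires expressing $\vardel_{t_k}$ in terms of the $\vardel_{p_m}$ via the Jacobian of $\Phi$ — this is where the Schur polynomials enter as the coefficients in $\vardel_{t_k} = \sum_m (\text{Schur coeff}) \vardel_{p_m}$ — and re-expanding the quadratic and linear pieces of $L_n$ accordingly. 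The claimed explicit formulas for $\hat{L}_n$ are then just the bookkeeping output of this substitution; I would check them against $n=0,1$ by hand and argue the general pattern by induction or by a uniform manipulation of the generating series. This yields $\hat{L}_n(e^{\calK}) = 0$ for all $n \geq 0$.

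For the second assertion — that the full collection of recursions determines $\calK$ from $\vardel_{p_0}\calK|_{0} = 1/24$ — the strategy is a dimension/degree induction on the monomials appearing in $e^{\calK}$. One assigns to each variable a weight (so that $p_0$ counts genus or the $\Moduli_g$ grading and $p_i$ carries the codimension of $\kappa_i$), checks that each $\hat{L}_n$ is homogeneous of a fixed weight, and then shows that in the coefficient extraction $[\,\cdot\,]_{\text{monomial}}\hat{L}_n(e^{\calK}) = 0$ the ``new'' intersection number of top weight appears with a nonzero (combinatorially explicit) coefficient — coming from the leading linear term $\alpha_{n,d}(\cdots)\vardel_{p_{m+n}}$ — so that it is expressed in terms of strictly lower-weight data already determined. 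The base case is the single number $\langle \kappa \rangle$ in the lowest nontrivial degree, which is exactly $\vardel_{p_0}\calK|_0 = 1/24$ (the $\Moduli_1$ computation). One must also verify that no coefficient of a needed monomial is identically zero, i.e.\ that the leading coefficients $\alpha_{n,d}$ and the $(2i+1)!!(2n-2i-1)!!/2^{n+1}$ factors never all cancel — this is where the explicit positivity of the double-factorial sums is used.

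The main obstacle, I expect, is the second step: carrying the conjugation through cleanly enough to land exactly on the stated closed forms, in particular matching the combinatorics of the change of variables (Bell polynomials from exponentiating, Schur polynomials from the Jacobian, the $q_i$ substitution) with the structure of the Virasoro operators, and keeping every normalization constant consistent so that the arithmetic factors in Definition \ref{kavira} emerge correctly rather than up to some fudge factor. The determinacy statement is comparatively soft once the correct weight grading is identified; the only subtlety there is ensuring the leading linear coefficient that isolates the top-weight unknown is genuinely nonzero for every relevant $(n,d)$, which reduces to an elementary check on the explicit rational constants.
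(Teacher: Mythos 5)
Your overall strategy (transport the Virasoro constraints through the $\psi$--$\kappa$ change of variables, then extract recursions using the leading linear term) is the paper's strategy, and your determinacy argument is essentially the one used in the proof of Theorem \ref{mainthm}. But there is a genuine gap in your central step. The map relating the two potentials is \emph{not} an invertible change of coordinates $\Phi$ on the space of KdV times: by Theorem \ref{thm:mz}, $\calK$ equals $\tilde\calF$ \emph{restricted to the hyperplane} $t_0=0$ (and, after the dilaton shift, with $u=e^{p_0}$), followed by a coordinate change on that hyperplane. Consequently the prescription $\hat L:=\Phi_*L_n\Phi_*^{-1}$ is not defined: every $L_n$ contains the transverse derivative $\vardel_{t_0}$ (in the quadratic terms $\vardel_{t_i}\vardel_{t_{n-1-i}}$, and in $L_0$ through $t_0\vardel_{t_0}$), and your Jacobian formula $\vardel_{t_k}=\sum_m(\text{Schur coeff})\,\vardel_{p_m}$ only re-expresses derivatives tangent to the hyperplane. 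Applying $L_n$ and restricting to $t_0=0$ do not commute, and the restricted function $e^{\calK}$ simply does not determine $\vardel_{t_0}e^{\calF}$, so the chain rule alone cannot produce operators annihilating $e^{\calK}$.

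The paper closes exactly this hole by first using the string equation: from $L_{-1}(e^{\calF})=0$ one solves for $\vardel_{t_0}e^{\calF}$ in terms of the remaining derivatives (equation \eqref{stringaway}), writes each $L_n$ with $\vardel_{t_0}$ as the rightmost factor, substitutes, and only then restricts to the hyperplane (Lemma \ref{lem:ann}); the resulting extra terms are precisely the $(ue^{p_0})^2$ blocks with products of Schur polynomials in Definition \ref{kavira}, so they are not optional bookkeeping but the outcome of this elimination. This is also why the paper stresses that Theorem \ref{mainthm} together with $L_{-1}$ is equivalent to Witten's conjecture --- the string equation is consumed in the construction and cannot be recovered from the $\hat L_n$. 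A second, smaller omission: the change of variables relates $\calK$ to the positive-genus part $\tilde\calF$, whereas the Virasoro constraints annihilate $e^{\calF}$ including the genus-zero term; one must check (as the paper does, using that $L_n$ is at most quadratic in $\vardel_{t_0}$ and $\calF_0$ is divisible by $t_0^3$) that $\left(L_ne^{\tilde\calF}\right)_{|t_0=0}=\left(L_ne^{\calF}\right)_{|t_0=0}$ before restricting. With these two points added, your outline matches the paper's proof; without them, the conjugation step as you state it would fail.
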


\subsection*{Context and motivation}

The study of the tautological intersection theory of the moduli space of curves was initiated in the seminal \cite{m:taegotmsoc}. In Mumford's words, this entails
\begin{quote}{\it
[...] setting up a Chow ring for the moduli space $\calM_g$ of curves of genus $g$ and its compactification $\Moduli_g$, defining what seem to be the most important classes in this ring [...]}
\end{quote}

 Besides boundary strata, two families of classes   played a prominent role: $\psi$ classes (Definition \ref{def-psi}) and $\kappa$ classes (Definition \ref{def-kappa}). Geometrically, $\psi$ classes arise when taking non-tranversal intersections of boundary strata. The most remarkable feature of intersection numbers of $\psi$ classes is Witten's conjecture/Kontsevich's theorem (\cite{ wittenconj,k:wittconj}): a generating function $\calF$ for intersection numbers of $\psi$ classes (the Gromov-Witten potential) is a $\tau$ function for the KdV hierarchy. One formulation of this statement says there exist a countable number of differential operators $L_n$ ($n\geq -1$) that annihilate $e^\calF$.  The vanishing of the coefficient of each monomial in $e^\calF$ gives a recursion on the intersection numbers of $\psi$ classes, and such recursions allow computations of all top intersection numbers of $\psi$ classes from the initial condition $\int_{\Moduli_{0,3}} 1 = 1$. 
 
 Mumford's conjecture (\cite{m:taegotmsoc}), now a theorem by Madsen and Weiss (\cite{mw:mg}), brought $\kappa$ classes to the foreground:  the stable cohomology of $\calM_g$ is a polynomial ring freely generated by the $\kappa$ classes.
 More recently,  Pandharipande (\cite{panda:k})  studied the restriction of the $\kappa$-ring,  the part of the tautological ring of the moduli space of curves generated by $\kappa$ classes, to the locus of curves of compact type; he unveiled the following interesting structure: the higher genus $\kappa$-rings are quotients of the genus zero ones in a canonical way.
   
 The intersection theories of $\psi$ and $\kappa$ classes both exhibit rich combinatorial structure and are born out of operations involving Chern classes of the relative dualizing sheaf and tautological morphisms. It should not be completely surprising that these two theories are closely related.
 
It is mentioned in  \cite{ac:cag}, and there credited to Carel Faber, that the degree of a top intersection monomial in $\psi$ classes on $\Moduli_{g,n} $ may be expressed as a polynomial in $\kappa$ classes on $\Moduli_g$, explicitly described as a sum of monomials indexed by elements in the symmetric group $S_n$. In \cite{wittenconj}, Witten remarks that knowing the intersection numbers of $\kappa$ classes on $\Moduli_g$ is equivalent to knowing intersection numbers of monomials in $\psi$ classes on all the $\Moduli_{g,n}$. An explicit formula to express a monomial in $\kappa$ classes as a polynomial in $\psi$ classes is derived independently in \cite{kmz}, \cite{alexeev-guy} and \cite{BC:omega} (see Corollary \ref{num}) .
 
Manin and Zograf (\cite[Thm 4.1]{mz}) show  the  formulae to express a single monomial in $\kappa$ classes on $\Moduli_g$ as a polynomial in $\psi$ classes on different $\Moduli_{g,n}$  directly imply that generating functions encoding intersection numbers of $\kappa$ (the $\kappa$-potential) and $\psi$ classes  are related by an explicitly described change of variables.

Our first result  is a new proof of Manin and Zograf's result, introducing a few intermediate steps which make the proof more conceptual and easy to understand. First, the formulae \eqref{omegafor2} allow us to construct an operator $\calL$   (see \eqref{fork}), such that $e^\calL$ acts on (the $g\geq 1$ part of) the Gromov-Witten potential to produce a potential  for $\omega$ classes (Definition \ref{def-omega}).

\begin{thmdo}
Denote by $\tilde\calF(\mathbf{t})$ the positive genus part of the Gromov-Witten potential, and by $\calS(\mathbf{s})$ the $\omega$-potential. We exhibit an operator $\calL$ such that:
\begin{equation*}
(e^\calL \tilde{\calF})_{|\mathbf{t}=0} = \calS.
\end{equation*}
\end{thmdo}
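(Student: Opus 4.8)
The plan is to make precise the combinatorial formula expressing $\kappa$ (equivalently $\omega$) monomials in terms of $\psi$ monomials, read off from it the shape of the operator $\calL$, and then verify the identity by a direct comparison of coefficients of the two generating functions. First I would recall the formula \eqref{omegafor2}: a monomial $\prod_j \omega_{a_j}$ in $\omega$ classes on $\Moduli_g$ equals a pushforward from $\Moduli_{g,n}$ (where $n$ is the number of factors) of an explicit polynomial in $\psi$ classes — concretely, each $\omega_{a}$ corresponds to a $\psi^{a+1}$ at a marked point, and one sums over all ways of grouping/merging the marked points (the $S_n$-indexed sum attributed to Faber), with signs and multiplicities. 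Passing to generating functions, the operation ``introduce a new marked point carrying $\psi^{a+1}$ weighted by $s_a$, then push forward, then allow these points to collide according to the combinatorial rule'' is exactly exponentiation of a first-order differential operator $\calL$ in the $\mathbf{t}$-variables: $\calL$ is the operator that creates one marked point with the appropriate $\psi$-weight and $s$-coefficient, and $e^\calL$ iterates this while the combinatorics of colliding points is precisely what the exponential (as opposed to the bare operator) records, via the multinomial coefficients in $\exp$. I would write $\calL = \sum_{a} s_{?} \,(\text{shift/forgetful contribution})\,\vardel_{t_?}$ in the form dictated by \eqref{omegafor2}, then restrict to $\mathbf{t}=0$ at the end, which kills all terms except those where every created marked point has been ``used up'', yielding exactly the $\omega$-potential $\calS(\mathbf{s})$.

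The key steps, in order: (1) State \eqref{omegafor2} in the normalization used here and translate each ingredient (a forgetful/pushforward map $\pi_*$, a $\psi$-class insertion) into an operation on the Gromov–Witten potential $\tilde\calF$; since $\tilde\calF$ is a generating function over all $\Moduli_{g,n}$ with $g\geq 1$, adding a marked point with a $\psi$-power is the standard ``puncture/dilaton-type'' differential operation, and this is what defines $\calL$. (2) Check that applying $\calL$ once produces the correct single-$\omega$-insertion, i.e. match the linear term. (3) Show that $e^\calL \tilde\calF$, expanded, reproduces the full $S_n$-indexed sum of \eqref{omegafor2} for an arbitrary $\omega$-monomial: here one must see that the $\tfrac{1}{k!}$ in the exponential series, combined with the choices of which $\vardel_{t}$ hits which factor, exactly reconstructs the combinatorial sum over set partitions / permutations, including the correct coefficients. (4) Restrict to $\mathbf{t}=0$ and identify the surviving terms with the definition of $\calS(\mathbf{s})$.

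The main obstacle I anticipate is step (3): bookkeeping the combinatorics so that the exponential of the simple operator $\calL$ matches the $S_n$-indexed formula on the nose, with all multiplicities and signs. In particular one has to be careful about (a) symmetry factors — marked points in $\tilde\calF$ are labeled, $\omega$-insertions in a monomial $\prod \omega_{a_j}^{b_j}$ may be repeated, and the $1/k!$'s have to be tracked through both — and (b) the fact that the Faber/KMZ formula involves not just adding points but also their collisions, so $\calL$ may need to be a genuine (possibly second-order, or $s$-dependent coefficient) operator rather than a plain vector field; getting its precise form is the crux, and once it is correct the verification is a formal manipulation of power series. A secondary subtlety is the restriction to the positive-genus part: one must confirm that the genus-zero terms that the formula would otherwise require (the unstable/low-dimensional contributions) either do not appear or are harmless, which is why the statement is phrased with $\tilde\calF$ rather than the full potential.
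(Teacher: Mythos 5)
Your overall strategy --- read the operator off the pinwheel formula \eqref{omegafor2}, verify $(e^\calL\tilde\calF)_{|\mathbf{t}=0}=\calS$ by matching coefficients of each monomial in $\mathbf{s}$, and track symmetry factors through the exponential --- is exactly the route the paper takes, and your steps (2)--(4) would go through essentially as you describe. The genuine gap is in step (1): the theorem is precisely the \emph{exhibition} of $\calL$, and you never pin it down; moreover the shape you sketch points the wrong way. You describe $\calL$ as ``the operator that creates one marked point with the appropriate $\psi$-weight and $s$-coefficient,'' i.e.\ one $s$-variable per term, with the collision combinatorics to be recovered later from the exponential, and you hedge that $\calL$ ``may need to be second order.'' Neither is what happens. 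In the paper each single term of $\calL$ already carries an entire block of the partition appearing in \eqref{omegafor2}: the operator is first order in the $t$-derivatives but of arbitrary degree in $\mathbf{s}$,
\begin{equation*}
\calL=\sum_{n=1}^\infty \frac{(-1)^{n-1}}{n!}\sum_{i_1,\dots,i_n\geq 0}s_{i_1}\cdots s_{i_n}\,\vardel_{t_{i_1+\cdots+i_n+1-n}},
\end{equation*}
so one term trades a single $\psi^{\,i_1+\cdots+i_n+1-n}$-insertion (one $t$-variable) for a whole group of $\omega$-insertions $\omega^{i_1},\dots,\omega^{i_n}$, with sign $(-1)^{n-1}$ and the exponent shift ``sum minus size plus one'' dictated by \eqref{omegafor2}.

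With that form, $e^\calL$ only has to distribute blocks among distinct $t$-derivatives, the $1/k!$'s of the exponential absorbing the unordered collection of blocks, which is why the set-partition sum of \eqref{omegafor2} comes out on the nose. A one-$s$-per-term vector field cannot work: its exponential would only produce partitions into singletons and could never generate the signs $(-1)^{n+\ell(\calP)}$ or the exponents $\alpha_j-|P_j|+1$ attached to parts of size at least two. Nor are second-order terms needed, since the identity \eqref{expop} is linear in $\tilde\calF$ (the operator acts on $\tilde\calF$, not on $e^{\tilde\calF}$), so a vector field suffices --- this is also what later makes the change-of-variables interpretation (Lemma \ref{expflow}) available. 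Finally, be careful with the direction of your dictionary: $\vardel_{t_k}$ does not ``add a marked point''; it selects and removes a $\psi^k$-leg from a bracket recorded in $\tilde\calF$, while multiplication by $s_{i_1}\cdots s_{i_n}$ records the group of $\omega$-legs replacing it, and the restriction to $\mathbf{t}=0$ discards exactly those terms in which some $\psi$-leg was left unconverted.
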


We have an explicit expression for the operator $\calL = \sum f_n(\mathbf{s}) \vardel_{t_n}$ as a vector field.  It follows 
that the two potentials are related by a change of variables given by the coefficients $f_n$ (Lemma \ref{expflow}). 

It is  a simple consequence of projection formula that the $\omega$-potential $\calS(\mathbf{s})$ and $\kappa$-potential $\calK(\mathbf{p})$ are related by specializing the variable $s_0=0$ and shifting down the other variables, so that $p_{n-1} = s_n$ (Lemma \ref{l:KSS}).

Combining these results, Theorem \ref{thm:mz} is a reformulation of Manin and Zograf's Theorem 4.1 (\cite{mz}), restricted to intersections of $\kappa$ classes on moduli spaces of unpointed curves $\Moduli_g$.

In the process of proving Theorem  \ref{thm:mz}, we note that we are restricting the domain of the Gromov-Witten potential to the hyperplane $t_0=0$. 
After dilaton shift and some cosmetic change of variables mostly  involving shifts and signs, this simplifies the change of variables relating $\calF_{|t_0 = 0}$ and $\calK$ to the following expression (where the coefficient of each power of $z$ identifies a variable $\hat{t}_{i}$ as a function of the variables $\hat{p}_{k} $):
\begin{equation} \label{incov}
\sum_{i=0}^\infty \hat{t}_{i} z^i = e^{\sum_{k=0}^\infty \hat{p}_{k} z^{k}}. 
\end{equation}

We then turn our attention to the Virasoro operators. The idea is simple: rewriting the differential operators $L_n$ in the variables for the $\kappa$-potential should produce a hierarchy of operators annihilating $e^\calK$, thereby producing relations among intersection numbers of $\kappa$ classes. In order to carry this out we use the string equation $L_{-1}$ to remove any occurrence of $\vardel_{t_0}$ from the $L_n$s. We then exploit the structure of the Jacobian of the change of variables \eqref{incov} -- which can be thought of as an infinite upper triangular matrix constant along translates of the diagonal -- to reduce performing the chain rule to manipulation operations on generating series. 

While Theorem \ref{mainthm} is shown as a consequence of Witten's conjecture, in fact Witten's conjecture is equivalent to Theorem \ref{mainthm} plus the string equation $L_{-1}$. The string equation has an elementary geometric proof, following from the behavior of $\psi$ classes when pulled-back via forgetful morphisms (Lemma \ref{lem-comp}). Therefore an independent proof of the relations among $\kappa$ classes  arising from Theorem \ref{mainthm} would in particular give another proof of Witten's conjecture.  

This work fits in the broader context of wall-crossings among Gromov-Witten potentials of Hassett spaces of weighted stable curves: the intersection theory of $\kappa$ classes  arises as a limit when the weights of some of the points are approaching $0$.  By using this language, and considering descendant invariants on heavy-light Hassett spaces one could recover the full statement  of Manin-Zograf's theorem, which compares the Gromov-Witten potential to the $\kappa$-potential for all $\Moduli_{g,n}$; in this work we chose to restrict our attention to the special case where all points are light, both because it highlights the most essential combinatorial relation between $\psi$ and $\kappa$ classes and because it allows us to reduce the bookkeeping and tell a compelling story  within the classical context of  intersection theory  on $\Moduli_{g,n}$.  The more general picture  for Hassett spaces will be treated in \cite{bc:tocome}.

\subsection*{Organization of the paper and communication}
While the ultimate motivation of this work is to further our understanding of the geometry of moduli spaces of curves $\Moduli_{g,n}$, the techniques used are combinatorial and analytic in nature. We are making a conscious effort to communicate to the union, rather than the intersection, of these three mathematical communities. Section \ref{bp} gives a review of all relevant definitions of tautological classes on $\Moduli_{g,n}$, as well as their behavior under pull-back/push-forward via tautological morphisms. Since some of these statements are well-known to the experts but somewhat hard to track down in the literature, we include several sketches of proofs. Section \ref{sec:gf}  introduces the generating functions for $\psi$, $\omega$, and $\kappa$ classes and translates the combinatorial formula \eqref{num} relating intersection numbers of $\psi$ and $\omega$ classes to a differential operator acting on the Gromov-Witten potential. In Section \ref{sec:cov} we use this interpretation to deduce that $\calF$ and $\calK$ are related by a specialization/change of variables and write explicitly such a transformation. Section \ref{sec:chain} derives the operators $\hat{L}_n$ in Definition \ref{kavira}. Since such a derivation is rather technical and bookkeeping intensive, we break it down into several subsections, with the intention that it should be possible for a reader interested in only a part of the computation to easily isolate it. The proof of Theorem \ref{mainthm} is then an immediate consequence of the construction of the operators $\hat{L}_n$.
Finally, in Section \ref{relations} we write down some of the relations among $\kappa$ classes that are produced with this method.

\subsection*{Acknowledgements}
We would like to thank Nick Ercolani, Hiroshi Iritani for helpful conversations. R.C. acknowledges support from the Simons foundation (Collaboration Grant 420720).

\section{Background and Preliminaries}
\label{bp}


This section contains basic information about tautological classes on moduli spaces of curves and their behavior with respect to pull-backs and push-forward via tautological morphisms. While we expect most readers to have some familiarity with these topics (otherwise excellent places to start are \cite{hm:mc,v:mscgw}), we intend this section to establish notation and to highlight concepts that are relevant to this work.

Given two non-negative integers $g,n$ satisfying $2g-2+n >0$, we denote by $\Moduli_{g,n}$ the fine moduli space for families of Deligne-Mumford stable curves of genus $g$ with $n$ marked points. The space $\Moduli_{g,n}$ is a smooth and projective DM stack of dimension $3g-3+n$ \cite{k:pmsII,k:pmsIII}. We recall the tautological morphisms that provide connections among different moduli spaces. 

Let $g_1, g_2$ be two non-negative integers adding to $g$, and $(P_1, P_2)$  a partition\footnote{If $g_i = 0$, we require $|P_i| \geq 2$} of the set $[n] = \{1,\dots,n\}$. The {\bf gluing morphism}
\begin{equation}
gl_{(g_1,P_1)| (g_2, P_2)}: \Moduli_{g_1, P_1\cup \{\bullet\}} \times  \Moduli_{g_2, P_2\cup \{\star\}}\to  \Moduli_{g, n}
\end{equation} 
assigns to a pair $((C_1; \{p_i\}_{i\in P_1}, \bullet), (C_2; \{p_j\}_{j\in P_2}, \star))$ the pointed nodal curve $(C_1\cup_{ \bullet = \star}C_2; p_1, \ldots, p_n)$ obtained by identifying the marks denoted by $\star$ and $\bullet$.  The image of the gluing morphism is an irreducible, closed subvariety of $\Moduli_{g,n}$ called a {\bf boundary divisor} and denoted $D((g_1,P_1)|(g_2, P_2))$.

More generally, given a nodal, pointed, stable curve $(C; p_1, \ldots, p_n)$, we can identify its topological type by its dual graph $\Gamma$ (\cite{y:inom}).  We define a more general gluing morphism
\begin{equation}
gl_\Gamma: \prod_{v\in V(\Gamma)} \Moduli_{g(v), val(v)} \to \Moduli_{g,n}
\end{equation}  
to be the morphism that glues marked points corresponding to pairs of flags that form an edge of the dual graph. The morphism $gl_\Gamma$ is finite of degree $|Aut(\Gamma)|$ onto its image. We call the image of $gl_\Gamma$ a {\bf boundary stratum} and denote it by $\Delta_\Gamma$. As a cycle class:
\begin{equation}\label{bs}
[\Delta_\Gamma] = \frac{1}{|Aut(\Gamma)|} gl_{\Gamma \ast}(1).
\end{equation}

For any mark $i \in [n+1]$, there is a {\bf forgetful morphism}
\begin{equation}
\pi_i: \Moduli_{g,n+1} \to \Moduli_{g, [n+1]\smallsetminus \{i\}} \cong \Moduli_{g, n},
\end{equation}
which assigns to an $(n+1)$-pointed curve $(C;p_1, \ldots, p_{n+1})$ the $n$-pointed curve obtained by forgetting the $i$-th marked point and  contracting any rational component of $C$ which has less than three special points (marks or nodes). The morphism $\pi_i$ functions as a universal family  for $\Moduli_{g,n}$, and so in particular the universal curve  $\UF_{g,n} \to \Moduli_{g,n}$ may be identified with $\Moduli_{g,n+1}$. 

The {\bf $i$-th tautological section} 
\begin{equation}
\sigma_i: \Moduli_{g,n} \to \UF_{g,n} \cong \Moduli_{g,n+1}
\end{equation}
assigns to an $n$-pointed curve $(C;p_1, \ldots, p_n)$ the point $p_i$ in the fiber over $(C;p_1, \ldots, p_n)$ in the universal curve. Such a point corresponds to the $(n+1)$-pointed curve obtained by attaching a rational component to the point $p_i\in C$ and placing the marks $p_i$ and $p_{n+1}$ arbitrarily on the new rational component. Via the identification of the universal map with a forgetful morphism, the section $\sigma_i$ may be viewed as a gluing morphism and its image as a boundary stratum, denoted $\Delta_{i,n+1}$. The  following diagram  illustrates this concept:
\begin{equation}
\xymatrix{
\UF_{g,n} \ar[rr]^{\cong} & & \Moduli_{g, n+1}\\
Im(\sigma_i) \ar@{}[u]|-*[@]{\subseteq}& & \Delta_{i,n+1}\ar@{}[u]|-*[@]{\subseteq}\\
\Moduli_{g,n} \ar[rr]^{\hspace{-1cm} \cong}  \ar[u]^{\sigma_i}&  & \Moduli_{g, [n]\smallsetminus \{i\} \cup \{\bullet\}} \times \Moduli_{0, \{\star, i, n+1\}} \ar[u]_{gl_{((g, [n]\smallsetminus \{i\} \cup \{\bullet\})|(0,\{\star, i, n+1\}))}}
}
\end{equation}

We consider all $\Moduli_{g,n}$ (for all  values of $g,n$) as a system of moduli spaces connected by the tautological morphisms and define the {\bf tautological ring} $\mathcal{R} = \{R^\ast(\Moduli_{g,n})\}_{g,n}$ of this system to be the smallest system of subrings of the Chow ring of  each $\Moduli_{g,n}$ containing all fundamental classes $[\Moduli_{g,n}]$ and closed under push-forwards and pull-backs via the tautological (gluing and forgetful) morphisms. By \eqref{bs}, classes of boundary strata are elements of the tautological ring.

We now introduce some other families of tautological classes which are studied in this work.

\begin{definition}\label{def-psi}
For any choice of mark $i \in [n]$, the class $\psi_i \in R^1(\Moduli_{g,n})$ is defined to be:
\begin{equation}\psi_i := c_1(\sigma_i^\ast(\omega_{\pi})),\end{equation} where $\omega_{\pi}$ denotes the relative dualizing sheaf of the universal family $\pi: \UF_{g,n} \to \Moduli_{g,n}$.
\end{definition}

\begin{definition}\label{def-omega}
Let $g,n\geq 1$, $i\in [n]$, and let $\rho_i:\Moduli_{g,n}\to\Moduli_{g,\{i\}}$ be the composition of forgetful morphisms for  all but the $i$-th mark. Then we define 
\begin{equation}
\omega_i := \rho_i^*\psi_i
\end{equation}
 in $R^1(\Moduli_{g,n}$). 
\end{definition}

\begin{definition}\label{def-kappa}
For a non-negative integer $i$, the class $\kappa_i \in R^i(\Moduli_{g,n})$ is:
\begin{equation}\kappa_i := \pi_{n+1 \ast}(\psi_{n+1}^{i+1}).\end{equation} 
\end{definition}

\begin{remark}
In \cite{m:taegotmsoc}, Mumford introduces first $\kappa$ classes on $\Moduli_{g}$ as
\begin{equation}
\kappa_i:= \pi_\ast(c_1(\omega_\pi)^{i+1}),
\end{equation}
where $\pi: \Moduli_{g,1} \cong \UF_{g} \to \Moduli_{g}$ denotes  the universal family. One may verify that on $\Moduli_{g,1}$ 
\begin{equation}
\psi_1 = c_1(\omega_\pi),
\end{equation}
which makes the (unique) $\psi$ class on $\Moduli_{g,1}$ an uncontroversially natural and canonically constructed class. There are two different ways to generalize this class to spaces with more than one mark: simply pulling-back  the class  $\psi$  gives rise to the class $\omega$, whereas focusing on the fact that $\psi$ is the Euler class of a line bundle whose fiber over the moduli point  $(C;p)$ is $T^\ast_p(C)$ generalizes to the definition of $\psi$ class given above.
\end{remark}

The following lemma shows how $\psi$ classes behave when pulled-back via forgetful morphisms.

\begin{lemma}\cite{k:pc} \label{lem-comp}
Consider the forgetful morphism $\pi_{n+1}: \Moduli_{g,n+1} \to \Moduli_{g,n}$, and let the context determine whether $\psi_i$ denotes the class on $\Moduli_{g,n}$ or $\Moduli_{g,n+1}$. For $i\in [n]$, we have:
\begin{equation} \label{pbr}
\psi_i = \pi_{n+1}^\ast (\psi_i) + D_{i,n+1},
\end{equation}
where  $D_{i,n+1}$ denotes the class of the image of the section $\sigma_i$, or equivalently the class of the boundary divisor $\Delta_{i,n+1}$, generically parameterizing nodal curves where one component  is rational and hosts the $i$-th and $(n+1)$-th marks.
\end{lemma}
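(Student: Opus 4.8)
\emph{Proof proposal.} The plan is to prove the identity at the level of divisor classes, by first localizing the discrepancy to $D_{i,n+1}$ and then computing its multiplicity. Regard $\pi_{n+1}\colon\Moduli_{g,n+1}\to\Moduli_{g,n}$ as the universal curve. Over a moduli point $(C;p_1,\dots,p_{n+1})$ the fiber of $\psi_i$ is the cotangent line $T^*_{p_i}C$ of the $(n+1)$-pointed curve, whereas the fiber of $\pi_{n+1}^*\psi_i$ is $T^*_{p_i}C'$, with $C'$ the stabilization of $C$ obtained by forgetting $p_{n+1}$. The only rational component that this stabilization can contract while carrying the mark $p_i$ is the one parameterized by $D_{i,n+1}$; away from $D_{i,n+1}$ the stabilization is an isomorphism near $p_i$, so $\psi_i$ and $\pi_{n+1}^*\psi_i$ are canonically isomorphic on $\Moduli_{g,n+1}\smallsetminus D_{i,n+1}$. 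Since $D_{i,n+1}$ is an irreducible divisor, the excision sequence for Chow groups forces $\psi_i-\pi_{n+1}^*\psi_i=a\,[D_{i,n+1}]$ for some integer $a$.

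To pin down $a$, I would restrict this identity to $D_{i,n+1}$, using the identification $D_{i,n+1}\cong\Moduli_{g,[n]\smallsetminus\{i\}\cup\{\bullet\}}\times\Moduli_{0,\{\star,i,n+1\}}\cong\Moduli_{g,n}$ furnished by $\sigma_i$. Three inputs: $\psi_i|_{D_{i,n+1}}=0$, because $p_i$ sits on the rigid three-pointed rational tail; $\pi_{n+1}^*\psi_i|_{D_{i,n+1}}=\psi_i$, because $\pi_{n+1}\circ\sigma_i=\id$; and $[D_{i,n+1}]|_{D_{i,n+1}}=-\psi_i$, since the normal bundle of the section $\sigma_i$ is $(\sigma_i^*\omega_{\pi_{n+1}})^{\vee}$, whose first Chern class is $-\psi_i$ by Definition \ref{def-psi}. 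Substituting gives $0=(1-a)\psi_i$ in $R^1(\Moduli_{g,n})$, hence $a=1$ whenever $\psi_i\neq 0$, i.e. whenever $\dim\Moduli_{g,n}\geq 1$; the remaining case $(g,n)=(0,3)$ is handled by inspection on $\Moduli_{0,4}\cong\mathbb{P}^1$. A uniform alternative avoiding this case distinction is to restrict the identity to a fiber $C$ of $\pi_{n+1}$ lying over a point with $C$ smooth and the $p_j$ distinct: there $\pi_{n+1}^*\psi_i|_C=0$, $[D_{i,n+1}]|_C$ is a reduced point, and a short computation of the relative dualizing sheaf of the universal curve over $C$ — the blow-up of $C\times C$ at the points $(p_j,p_j)$ — yields $\deg(\psi_i|_C)=1$.

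The localization step is formal; the main obstacle is the coefficient computation, and in particular the self-intersection input $[D_{i,n+1}]^2=-\psi_i\cdot[D_{i,n+1}]$, equivalently the identification of the conormal sheaf of a section of a family of nodal curves (along which the fibers are smooth) with the pullback of the relative dualizing sheaf. If one runs the test-curve version instead, the delicate point becomes the correct bookkeeping of the relative dualizing sheaf after blowing up the diagonal locus. Either way, the remaining ingredients — excision and the restriction of $\psi$ classes to boundary divisors — are routine.
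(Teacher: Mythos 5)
Your argument is correct, but note that the paper offers no proof of this lemma at all: it is quoted from the cited reference (Kock's notes on $\psi$ classes), where the standard argument runs differently. The usual route is to compare the two line bundles directly: there is a natural map from the bundle computing $\pi_{n+1}^\ast(\psi_i)$ to the one computing $\psi_i$ (induced by the differential of the stabilization map), and one checks locally that it is an isomorphism off $\Delta_{i,n+1}$ and vanishes to first order along it, which yields the identity at once, with the coefficient $1$ coming out of the local computation. You instead localize by excision to get $\psi_i-\pi_{n+1}^\ast(\psi_i)=a\,[D_{i,n+1}]$ and then determine $a$ globally, either by restricting to $D_{i,n+1}\cong\Moduli_{g,n}$ (using $\psi_i|_{D_{i,n+1}}=0$, $\sigma_i^\ast\pi_{n+1}^\ast\psi_i=\psi_i$, and $\sigma_i^\ast[D_{i,n+1}]=c_1(N_{\sigma_i})=-\psi_i$, the last because the section lies in the smooth locus of the fibers so $N_{\sigma_i}\cong(\sigma_i^\ast\omega_\pi)^\vee$), or by restricting to a general fiber. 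Both of your coefficient computations are sound; the first needs the non-vanishing (and non-torsion) of $\psi_i$ on $\Moduli_{g,n}$, which is why the case $(g,n)=(0,3)$ must be checked by hand as you do, while the fiber restriction (degree of $\psi_i$ on a fiber equals $1$ via the blow-up model of the universal curve, and the section meets the fiber in one reduced point) is uniform. The trade-off is that the local bundle-map proof is self-contained and needs no auxiliary input, whereas your version leans on the standard facts about restriction of $\psi$ classes to boundary divisors and the conormal bundle of a section, but in exchange avoids any local coordinate computation except in the degree-$1$ verification.
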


Iterated applications of Lemma \ref{lem-comp} show the relation between the classes $\psi_i$ and $\omega_i$ on $\Moduli_{g,n}$. Denote by $D(A|B)$ the divisor $D((g,A)|(0,B))$. We call any boundary stratum where all the genus is concentrated at one vertex of the dual graph a stratum of {\bf rational tails} type.

\begin{lemma}
Let $g,n\geq 1$ and $i\in [n]$. Then:
\begin{equation} \label{ompsirel}
\psi_i= \omega_i + \sum_{B\ni i} D(A|B)
\end{equation}
\end{lemma}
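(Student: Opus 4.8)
The plan is to prove the identity $\psi_i = \omega_i + \sum_{B \ni i} D(A|B)$ by induction on $n$, peeling off one marked point at a time via forgetful morphisms and repeatedly invoking the comparison formula of Lemma \ref{lem-comp}. First I would set up the base case $n = 1$: here $\rho_i$ is the identity, so $\omega_i = \psi_i$, and the sum over $B \ni i$ with $|B| \geq 2$ is empty, so the identity is trivially the tautology $\psi_1 = \psi_1$. For the inductive step, I would factor $\rho_i : \Moduli_{g,n} \to \Moduli_{g,\{i\}}$ as $\rho_i = \rho_i' \circ \pi_j$, where $j \neq i$ is some mark to be forgotten first and $\rho_i'$ forgets all remaining marks except $i$ from $\Moduli_{g,n-1}$. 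Applying Lemma \ref{lem-comp} to $\pi_j$ (after relabeling so that $j$ plays the role of the $(n+1)$-th point), I get $\psi_i = \pi_j^\ast \psi_i + D_{i,j}$ on $\Moduli_{g,n}$, where $D_{i,j} = D(A \mid \{i,j\})$ is the boundary divisor whose rational tail carries exactly $i$ and $j$. By the inductive hypothesis on $\Moduli_{g,n-1}$, $\psi_i = \omega_i + \sum_{B' \ni i} D(A' \mid B')$, so pulling back and using $\pi_j^\ast \omega_i = \rho_i^\ast \psi_i = \omega_i$ (since $\omega_i$ is defined by pulling all the way back to $\Moduli_{g,\{i\}}$ and the composition is compatible), I obtain $\psi_i = \omega_i + D(A \mid \{i,j\}) + \sum_{B' \ni i} \pi_j^\ast D(A' \mid B')$.

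The key remaining point is to identify $\sum_{B' \ni i} \pi_j^\ast D(A' \mid B') + D(A \mid \{i,j\})$ with $\sum_{B \ni i} D(A \mid B)$, the sum over all rational-tails boundary divisors on $\Moduli_{g,n}$ whose tail component contains $i$. The natural bijection is: rational-tails divisors $D(A \mid B)$ on $\Moduli_{g,n}$ with $i \in B$ split into those with $j \notin B$ and those with $j \in B$. Those with $j \notin B$ are exactly the pullbacks $\pi_j^\ast D(A' \mid B')$ of the divisors on $\Moduli_{g,n-1}$ (here one must check that $\pi_j^\ast D(A' \mid B') = D(A' \cup \{j\} \mid B')$, i.e. that forgetting $j$ sends this divisor to the one where $j$ sits on the genus-$g$ component — this is the standard pullback-of-boundary formula, valid because $|B'| \geq 2$ forces no rational component to be destabilized when adding $j$ away from the tail). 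Those with $j \in B$: if $B = \{i,j\}$ we get precisely the term $D(A \mid \{i,j\})$; if $|B| > 2$ with $j \in B$, such a divisor also appears — so I need to be careful and check that these too are accounted for. In fact $D(A \mid B)$ with $j \in B$, $|B| \geq 3$ is $\pi_j^\ast D(A' \mid B \smallsetminus \{j\})$ as well (since $B \smallsetminus \{j\}$ still has size $\geq 2$), so the honest statement is that $\pi_j^\ast \sum_{B' \ni i} D(A' \mid B')$ already contains \emph{all} divisors $D(A \mid B)$ with $|B| \geq 3$ (with or without $j$) plus those with $B = \{i, k\}$, $k \neq j$, and the single extra term $D(A \mid \{i,j\})$ from Lemma \ref{lem-comp} fills in the last missing divisor $B = \{i,j\}$. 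I would organize this bookkeeping cleanly by just asserting: the set of rational-tails divisors containing $i$ on $\Moduli_{g,n}$ is the disjoint union of $\{D(A \mid \{i,j\})\}$ and the preimages (with multiplicity one) under $\pi_j^\ast$ of those on $\Moduli_{g,n-1}$, and the pullback formula for boundary divisors is exactly multiplicity one here.

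The main obstacle I anticipate is precisely this combinatorial/geometric identification of the pulled-back boundary divisors, and in particular verifying that no corrections or higher-multiplicity terms appear when pulling back $D(A' \mid B')$ via $\pi_j$. The potential subtlety is whether adding the point $j$ to the genus-$g$ component can ever cause a stabilization issue, or whether $\pi_j$ could contract something — but since the genus-$g$ component always has at least the node and (in the rational-tails picture) any other structure needed, and since we are on $\Moduli_{g,n}$ with $g, n \geq 1$, adding a point to the big component is always harmless, and the standard fact $\pi_j^\ast[D(A' \mid B')] = [D(A' \cup \{j\} \mid B')]$ holds with coefficient one. I would cite the relevant pullback-of-boundary statement from the background section (or prove it in one line using the fact that $\pi_j$ restricted to the preimage of the divisor is again a forgetful morphism on each factor). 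Once that is in hand, the sums match term by term and the induction closes. An alternative, perhaps cleaner, route is to forget all marks except $i$ simultaneously and apply an iterated version of Lemma \ref{lem-comp} in one shot, collecting all the correction divisors $D_{k, \text{new}}$ at each stage; but the single-point induction makes the bijection of divisors most transparent, so that is the approach I would write up.
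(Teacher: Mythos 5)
Your overall strategy is exactly the one the paper has in mind (it offers no written proof beyond ``iterated applications of Lemma \ref{lem-comp}''): peel off marks one at a time, use $\psi_i = \pi_j^\ast\psi_i + D_{i,j}$, note $\pi_j^\ast\omega_i = \omega_i$ since $\rho_i$ factors through $\pi_j$, and match boundary terms. The base case and the factorization step are fine. But there is a concrete error in the key pullback statement you rely on: you assert, twice (once parenthetically and again in the final paragraph as ``the standard fact''), that $\pi_j^\ast D(A'\mid B') = D(A'\cup\{j\}\mid B')$ with coefficient one and no further terms. That is false. The preimage of $D(A'\mid B')$ under the forgetful map consists of curves where the extra point $j$ may lie on \emph{either} component, and the correct formula is
\begin{equation*}
\pi_j^\ast D(A'\mid B') \;=\; D(A'\cup\{j\}\mid B') \;+\; D(A'\mid B'\cup\{j\}),
\end{equation*}
both with multiplicity one. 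With only your one-term formula, every divisor $D(A\mid B)$ with $j\in B$ and $|B|\geq 3$ would be missing from the right-hand side, and the induction would not close; your middle paragraph senses this and tacitly uses the second term (that is the only way the pullback can ``already contain all divisors with $|B|\geq 3$, with or without $j$''), so as written the argument contradicts itself.

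The fix is simply to use the two-term formula throughout: then, summing over $B'\ni i$ with $|B'|\geq 2$ in $[n]\smallsetminus\{j\}$, the terms $D(A'\cup\{j\}\mid B')$ account exactly once for every $D(A\mid B)$ with $i\in B$, $j\notin B$; the terms $D(A'\mid B'\cup\{j\})$ account exactly once for every $D(A\mid B)$ with $\{i,j\}\subseteq B$ and $|B|\geq 3$; and the correction divisor $D_{i,j}$ from Lemma \ref{lem-comp} supplies the one remaining case $B=\{i,j\}$. No multiplicities or stabilization corrections arise (here $g\geq 1$ guarantees the genus-$g$ component stays stable when $j$ is added to it), so the tally is exact and the induction closes. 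With that single correction your proof is complete and coincides with the paper's intended argument.
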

In words, this means that $\psi_i$ is obtained from $\omega_i$ by adding all divisors of rational tails type where the $i$-th mark is contained in the rational component.

The behavior of $\kappa$ classes under pull-back via forgetful morphisms  was studied in \cite{ac:cag}.
\begin{lemma}[\cite{ac:cag}]
Consider the forgetful morphism $\pi_{n+1}: \Moduli_{g,n+1} \to \Moduli_{g,n}$, and let the context determine whether $\kappa_i$ denotes the class on $\Moduli_{g,n}$ or $\Moduli_{g,n+1}$. We have:
\begin{equation}
\kappa_i = \pi_{n+1}^\ast(\kappa_i)+ \psi_{n+1}^i
\end{equation}
\end{lemma}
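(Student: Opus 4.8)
The plan is to express both sides through the universal-curve description of $\kappa$ classes and then feed in the $\psi$-pullback formula of Lemma \ref{lem-comp} together with the self-intersection formula for tautological sections. Write $\pi := \pi_{n+1}\colon \Moduli_{g,n+1}\to \Moduli_{g,n}$ for the universal curve and $\pi'\colon \Moduli_{g,n+2}\to \Moduli_{g,n+1}$ for the map forgetting the last mark, so that by Definition \ref{def-kappa} the class $\kappa_i$ on $\Moduli_{g,n+1}$ is $\pi'_\ast(\psi_{n+2}^{i+1})$ while $\kappa_i$ on $\Moduli_{g,n}$ is $\pi_\ast(\psi_{n+1}^{i+1})$. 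First I would form the commuting square whose two composites $\Moduli_{g,n+2}\to\Moduli_{g,n}$ both forget the last two marks: its left arrow is $\pi'$, its top arrow $\tilde\pi$ is the map forgetting the $(n+1)$-st mark (then relabelling the $(n+2)$-nd mark as the $(n+1)$-st), and its bottom and right arrows are both $\pi$. Flat base change along this square, together with the projection formula, then turns $\pi^\ast\kappa_i = \pi^\ast\pi_\ast(\psi_{n+1}^{i+1})$ into $\pi'_\ast\bigl(\tilde\pi^\ast(\psi_{n+1}^{i+1})\bigr)$.

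To evaluate this, I would compute $\tilde\pi^\ast\psi_{n+1}$. Since $\tilde\pi$ forgets the $(n+1)$-st mark and, under the relabelling, the class $\psi_{n+1}$ on the target corresponds to the cotangent class at the surviving added mark, Lemma \ref{lem-comp} gives $\tilde\pi^\ast(\psi_{n+1}) = \psi_{n+2} - D_{n+1,n+2}$ on $\Moduli_{g,n+2}$, where $D_{n+1,n+2}$ is the boundary divisor along which the last two marks lie on a rational bubble. The key relation is $\psi_{n+2}\cdot D_{n+1,n+2} = 0$: on $D_{n+1,n+2}$ the $(n+2)$-nd mark sits on a three-pointed rational component, so its cotangent line is canonically trivial and $\psi_{n+2}$ restricts to zero. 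This kills every mixed term in the binomial expansion, leaving $\tilde\pi^\ast(\psi_{n+1}^{i+1}) = \psi_{n+2}^{i+1} + (-1)^{i+1}D_{n+1,n+2}^{i+1}$. Because $D_{n+1,n+2}$ is the image of a tautological section of $\pi'$, its self-intersection pulls back along that section to $-\psi_{n+1}$ — itself an immediate consequence of Lemma \ref{lem-comp} — so $\pi'_\ast\bigl(D_{n+1,n+2}^{i+1}\bigr) = (-1)^i\psi_{n+1}^i$. Pushing everything forward along $\pi'$ yields $\pi^\ast\kappa_i = \kappa_i - \psi_{n+1}^i$ on $\Moduli_{g,n+1}$, which rearranges to the assertion.

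The step I expect to be the real obstacle is the base change: the square above is not literally Cartesian, because the fibre product $\Moduli_{g,n+1}\times_{\Moduli_{g,n}}\Moduli_{g,n+1}$ acquires (conifold-type) singularities over the boundary of $\Moduli_{g,n}$. I would handle this by factoring $\tilde\pi$ and $\pi'$ through that fibre product — over which the square genuinely is Cartesian and flat base change applies — and by observing that the resulting map $\Moduli_{g,n+2}\to \Moduli_{g,n+1}\times_{\Moduli_{g,n}}\Moduli_{g,n+1}$ is proper, birational, and an isomorphism outside a locus of codimension two, hence carries the classes $\psi_{n+2}$ and $D_{n+1,n+2}$ compatibly and leaves the relevant push-forwards unchanged. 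A more elementary route that avoids base change altogether is a direct computation on the universal curve $\pi'$: using $\psi_{n+2} = c_1(\omega_{\pi'}) + \sum_{j=1}^{n+1} D_{j,n+2}$, one expands $\psi_{n+2}^{i+1}$ by exploiting the disjointness of the sections together with $c_1(\omega_{\pi'})\cdot D_{j,n+2} = (\sigma_j)_\ast\psi_j = -D_{j,n+2}^2$ and compares with the analogous expansion of $\psi_{n+1}^{i+1}$ one level below; this uses nothing beyond the projection formula but requires more bookkeeping.
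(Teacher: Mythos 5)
The paper does not prove this lemma at all: it is quoted from Arbarello--Cornalba (\cite{ac:cag}) without proof, so there is no internal argument to compare against. Your proposal is correct and is essentially the classical proof from that reference: pull $\kappa_i=\pi_{*}(\psi_{n+1}^{i+1})$ back through the ``double universal curve'' square, use $\tilde\pi^{*}(\psi_{n+1})=\psi_{n+2}-D_{n+1,n+2}$, the vanishing $\psi_{n+1}\cdot D_{n+1,n+2}=\psi_{n+2}\cdot D_{n+1,n+2}=0$, and the self-intersection identity $\sigma_{n+1}^{*}(D_{n+1,n+2})=-\psi_{n+1}$ to get $\pi^{*}\kappa_i=\kappa_i-\psi_{n+1}^{i}$; the signs and the $i=0$ check against Lemma \ref{ka0} all come out right. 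One caveat on your handling of the non-Cartesian square: the comparison map $\rho\colon\Moduli_{g,n+2}\to\Moduli_{g,n+1}\times_{\Moduli_{g,n}}\Moduli_{g,n+1}$ is \emph{not} an isomorphism away from codimension two in the source --- it contracts the boundary divisors on which the marks $n+1$, $n+2$ lie on a rational component carrying a third special point (the $\Moduli_{0,4}$-modulus is forgotten by both projections), and these divisors land in codimension two of the fibre product. This imprecision is harmless, though, because the argument does not need any statement about where $\rho$ is an isomorphism: since $\pi$ is flat and proper, push-pull on the genuinely Cartesian square gives $\pi^{*}\pi_{*}=p_{1*}p_{2}^{*}$ on Chow classes, and since $\rho$ is proper and birational and $\psi_{n+1}$ is the first Chern class of a line bundle, the projection formula gives $\rho_{*}\rho^{*}\bigl(c_1^{\,i+1}\bigr)\cap[X]=c_1^{\,i+1}\cap[X]$, whence $\pi'_{*}\tilde\pi^{*}(\psi_{n+1}^{i+1})=\pi^{*}\kappa_i$ as you use. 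With that substitution your argument is complete; the alternative expansion via $\psi_{n+2}=c_1(\omega_{\pi'})+\sum_j D_{j,n+2}$ that you sketch at the end would also work and avoids base change, at the cost of the extra bookkeeping you anticipate.
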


The next group of Lemmas gives information about the behavior of tautological classes under push-forward via forgetful morphisms. These are familiar facts for people in the field, but it is non-trivial to track down appropriate references; for this reason we add brief sketches of proofs that could be completed by the interested reader.
Pushing-forward a monomial in $\psi$ classes along a morphism that forgets a mark that does not support a $\psi$ class one obtains the so-called {\bf string recursion}. 
\begin{lemma}[String]\label{str}
Consider the forgetful morphism $\pi_{n+1}: \Moduli_{g,n+1} \to \Moduli_{g,n}$. Let $K \in \bZ^n$ denote the vector  $(k_1, \ldots, k_n)$ and $e_i$  the $i$-th standard basis vector. By $\psi^K$ we mean $\prod \psi_i^{k_i}$ and we adopt the convention that $\psi_i^m = 0$ whenever $m<0$. Then:
\begin{equation}\label{se}
\pi_{n+1 \ast}(\psi^K) = \sum_{i=1}^n \psi^{K-e_i}. 
\end{equation}
\end{lemma}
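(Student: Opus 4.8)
The plan is to reduce the pushforward of a $\psi$-monomial to the pushforward of a single class by repeatedly applying the comparison formula \eqref{pbr} of Lemma \ref{lem-comp}, and then to evaluate the resulting terms using the projection formula together with two elementary facts about the forgetful morphism $\pi_{n+1}$: first, that $\pi_{n+1}$ has one-dimensional fibers, so $\pi_{n+1\ast}$ of a class of relative dimension $0$ is zero and $\pi_{n+1\ast}$ of a class of relative dimension $1$ lands in the expected codimension; second, that for the section (boundary divisor) $D_{i,n+1} = \Delta_{i,n+1}$ one has $\pi_{n+1\ast}(D_{i,n+1}) = 1$ and, crucially, the self-intersection identity $\psi_i \cdot D_{i,n+1} = 0$ on $\Moduli_{g,n+1}$ (the $\psi_i$ class restricted to the section records the cotangent line at $p_i$ on a $3$-pointed rational component, which is trivial), as well as $D_{i,n+1}\cdot D_{j,n+1} = 0$ for $i\neq j$.

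First I would write, for each $i$ with $k_i \geq 1$, the substitution $\psi_i = \pi_{n+1}^\ast(\psi_i) + D_{i,n+1}$ coming from \eqref{pbr}, and expand the product $\psi^K = \prod_{i=1}^n \psi_i^{k_i}$ on $\Moduli_{g,n+1}$ accordingly. Using $\psi_i\cdot D_{i,n+1}=0$ and $\pi_{n+1}^\ast(\psi_i)\cdot D_{i,n+1} = D_{i,n+1}\cdot D_{i,n+1} = -\psi_i^{\mathrm{(on\ the\ rational\ bubble)}}\cdot\ldots$, one sees that in the expansion a factor $\psi_i^{k_i} = (\pi_{n+1}^\ast\psi_i + D_{i,n+1})^{k_i}$ collapses: the cross terms vanish because $\pi_{n+1}^\ast(\psi_i)^a D_{i,n+1}^b$ with $a,b\geq 1$ contains $\pi_{n+1}^\ast(\psi_i)\cdot D_{i,n+1}$ which equals $D_{i,n+1}^2 = -\psi_i|_{D}\cdot(\text{pullback})$, and the key point is that after self-intersecting the divisor once more it is supported on a locus where the relevant $\psi$ vanishes; more cleanly, $D_{i,n+1}^2 = D_{i,n+1}\cdot(-\pi_{n+1}^\ast\psi_i)$ on the divisor and $\pi_{n+1}^\ast(\psi_i)|_{D_{i,n+1}}$ pulls back from a point factor and hence $D_{i,n+1}^{k_i} = (-1)^{k_i-1}\pi_{n+1}^\ast(\psi_i^{k_i-1})\cdot D_{i,n+1}$, while $\pi_{n+1}^\ast(\psi_i)\cdot D_{i,n+1} = 0$ kills all mixed terms with both $a\geq 1$ and $b\geq 1$ except the pure powers. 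Hence $\psi_i^{k_i} = \pi_{n+1}^\ast(\psi_i^{k_i}) + (\text{a multiple of }D_{i,n+1})$, and since distinct $D_{i,n+1}$ multiply to zero, the full expansion of $\psi^K$ is $\pi_{n+1}^\ast(\psi^K) + \sum_i D_{i,n+1}\cdot\pi_{n+1}^\ast(\psi^{K-e_i})$ after the collapses.

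Then I would push forward: $\pi_{n+1\ast}(\pi_{n+1}^\ast(\psi^K)) = 0$ by the projection formula since $\pi_{n+1\ast}(1)=0$ (the fiber is a curve, so a codimension-zero class pushes to zero in the appropriate degree), and $\pi_{n+1\ast}(D_{i,n+1}\cdot \pi_{n+1}^\ast(\psi^{K-e_i})) = \psi^{K-e_i}\cdot \pi_{n+1\ast}(D_{i,n+1}) = \psi^{K-e_i}$ again by the projection formula, using that the section $\sigma_i$ (whose image is $D_{i,n+1}$) is a section of $\pi_{n+1}$, so $\pi_{n+1\ast}[\Delta_{i,n+1}] = 1$. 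Summing over $i$ yields $\pi_{n+1\ast}(\psi^K) = \sum_{i=1}^n \psi^{K-e_i}$, with the convention $\psi_i^m = 0$ for $m<0$ automatically handled because $D_{i,n+1}\cdot\pi_{n+1}^\ast(\psi^{K-e_i})$ is absent precisely when $k_i = 0$ (there is no $\psi_i$ factor to break up). The main obstacle — and the step requiring the most care — is the bookkeeping of the multinomial expansion and the verification that all mixed terms genuinely vanish; this hinges on the two self-intersection identities $\psi_i\cdot D_{i,n+1}=0$ and $D_{i,n+1}\cdot D_{j,n+1}=0$, which I would justify geometrically from the description of $\Delta_{i,n+1}$ as parametrizing a rational bubble carrying marks $i$ and $n+1$ (so the cotangent line at $p_i$ there is trivial, and two such bubbles cannot coexist generically), rather than by a direct Chow-ring computation.
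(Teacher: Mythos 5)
Your overall route is the paper's: substitute \eqref{pbr} into each factor, collapse the expansion using self-intersection relations, and finish with the projection formula together with $\pi_{n+1\ast}(1)=0$ and $\pi_{n+1\ast}(D_{i,n+1})=1$; your final identity $\psi^K=\pi_{n+1}^\ast(\psi^K)+\sum_i D_{i,n+1}\,\pi_{n+1}^\ast(\psi^{K-e_i})$ and the pushforward step are correct. However, the justification you give for the collapse fails at its key point. You claim $\pi_{n+1}^\ast(\psi_i)\cdot D_{i,n+1}=0$, on the grounds that $\pi_{n+1}^\ast(\psi_i)$ restricted to $D_{i,n+1}$ is pulled back from the point factor. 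That is backwards: it is $\psi_i$ itself (the class on $\Moduli_{g,n+1}$) whose restriction to $D_{i,n+1}$ is the cotangent class at the $i$-th mark of the three-pointed rational bubble, hence zero; whereas $\pi_{n+1}^\ast(\psi_i)$ restricts, under the identification $D_{i,n+1}\cong\Moduli_{g,n}$ given by the section $\sigma_i$, to the class $\psi_i$ on $\Moduli_{g,n}$, which is nonzero. Indeed $\pi_{n+1\ast}\bigl(\pi_{n+1}^\ast(\psi_i)\cdot D_{i,n+1}\bigr)=\psi_i\neq 0$, and $\pi_{n+1}^\ast(\psi_i)\cdot D_{i,n+1}=-D_{i,n+1}^2$ (note also the sign slip in your intermediate display). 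If your vanishing were true, all mixed terms and all $D_{i,n+1}^b$ with $b\geq 2$ would die, giving $\psi_i^{k_i}=\pi_{n+1}^\ast(\psi_i)^{k_i}$ for $k_i\geq 2$ and hence, for example, $\pi_{n+1\ast}(\psi_1^2)=0$ for $\Moduli_{g,2}\to\Moduli_{g,1}$, contradicting \eqref{se}. So the mixed terms do not vanish individually; they combine.

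The correct collapse uses only $\psi_i\cdot D_{i,n+1}=0$: for $k_i\geq 2$, $\psi_i^{k_i}=\psi_i^{k_i-1}\bigl(\pi_{n+1}^\ast(\psi_i)+D_{i,n+1}\bigr)=\psi_i^{k_i-1}\pi_{n+1}^\ast(\psi_i)$, and iterating yields $\psi_i^{k_i}=\pi_{n+1}^\ast(\psi_i)^{k_i}+D_{i,n+1}\,\pi_{n+1}^\ast(\psi_i)^{k_i-1}$, which is exactly the substitution used in the paper's proof (equivalently, expand binomially and use $D_{i,n+1}^{b}=(-1)^{b-1}\pi_{n+1}^\ast(\psi_i)^{b-1}D_{i,n+1}$, whose coefficients sum to one copy of the cross term). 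With this repair, together with $D_{i,n+1}\cdot D_{j,n+1}=0$ for $i\neq j$, your expansion and the projection-formula finish go through verbatim and reproduce \eqref{se}.
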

\begin{proof}
Equation \eqref{se} is proved  by using Lemma \ref{lem-comp} to replace each $\psi_i^{k_i}$ with $\pi_{n+1}^\ast( \psi_i)^{k_i} + D_{i,n+1}\pi_{n+1}^\ast (\psi_i)^{k_i-1}$, and then applying projection formula to obtain the right hand side of \eqref{se}. More details can be found in \cite[Lemma 1.4.2]{k:pc}.
\end{proof}

The analogous statement for $\omega$ classes is rather trivial.
\begin{lemma}[$\omega$-string] \label{ostring}
Let $n>0$, and consider the forgetful morphism $\pi_{n+1}: \Moduli_{g,n+1} \to \Moduli_{g,n}$. Let $K \in \bZ^n$ denote the vector  $(k_1, \ldots, k_n)$ and $\omega^K = \prod \omega_i^{k_i}$. Then:
\begin{equation}
\pi_{n+1 \ast}(\omega^K) = 0.
\end{equation}
\end{lemma}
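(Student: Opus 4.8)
The plan is to reduce the statement to a dimension/degree count combined with the projection formula. Recall that $\omega_i = \rho_i^* \psi_i$ where $\rho_i: \Moduli_{g,n} \to \Moduli_{g,\{i\}}$ forgets all marks except the $i$-th, and that $\rho_i$ factors through $\pi_{n+1}: \Moduli_{g,n+1} \to \Moduli_{g,n}$ when we additionally forget the $(n+1)$-st mark. Concretely, let $\rho_i': \Moduli_{g,n+1} \to \Moduli_{g,\{i\}}$ be the forgetful map dropping every mark except $i$; then $\rho_i' = \rho_i \circ \pi_{n+1}$, so $\omega_i$ on $\Moduli_{g,n+1}$ equals $(\rho_i')^*\psi_i = \pi_{n+1}^*(\rho_i^*\psi_i) = \pi_{n+1}^*(\omega_i)$, where now $\omega_i$ on the right is the class on $\Moduli_{g,n}$.

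Given this, the monomial $\omega^K = \prod_{i=1}^n \omega_i^{k_i}$ on $\Moduli_{g,n+1}$ is simply $\pi_{n+1}^*(\prod_{i=1}^n \omega_i^{k_i})$, i.e.\ $\pi_{n+1}^*(\omega^K)$ with $\omega^K$ now denoting the class on $\Moduli_{g,n}$. The projection formula then gives
\begin{equation*}
\pi_{n+1\ast}(\omega^K) = \pi_{n+1\ast}\bigl(\pi_{n+1}^*(\omega^K) \cdot 1\bigr) = \omega^K \cdot \pi_{n+1\ast}(1).
\end{equation*}
Since $\pi_{n+1}$ is a morphism of relative dimension $1$ (its fibers are the curves themselves), $\pi_{n+1\ast}(1) = 0$ in the Chow ring — pushing forward the fundamental class along a positive-dimensional fiber drops degree below zero. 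Hence $\pi_{n+1\ast}(\omega^K) = 0$.

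The only mild subtlety, and the step I would be most careful about, is justifying the factorization $\rho_i' = \rho_i \circ \pi_{n+1}$ at the level of morphisms of moduli stacks, together with the compatibility of pullbacks; this is where one must be sure that forgetting marks in either order yields canonically isomorphic maps and that no stabilization subtlety intervenes (it does not, since forgetting the $(n+1)$-st mark and then the rest agrees with forgetting all of them). Everything else is formal: the projection formula and the vanishing of $\pi_{n+1\ast}(1)$ for a fibration in curves. One could alternatively phrase this without the explicit factorization by noting directly that each $\omega_i$ is pulled back from a space where the $(n+1)$-st mark does not appear, but the factorization statement is the cleanest route and matches the remark in the excerpt that this lemma is ``rather trivial.''
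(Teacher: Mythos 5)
Your proof is correct and is essentially the paper's own argument: the paper's proof is the one-line observation that $\omega^K = \pi_{n+1}^\ast(\omega^K)$ (since each $\omega_i$ is pulled back from $\Moduli_{g,\{i\}}$ and the forgetful maps compose) together with the vanishing of a push-forward of a pulled-back class along a map with positive-dimensional fibers. You have merely spelled out the factorization $\rho_i' = \rho_i\circ\pi_{n+1}$ and the projection-formula step that the paper leaves implicit.
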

\begin{proof}
The map $\pi_{n+1}$ has positive dimensional fibers, and by definition $\omega^K = \pi_{n+1}^\ast(\omega^K)$, which implies the vanishing of the push-forward.
\end{proof}

\begin{lemma}\label{ka0}
For any $g,n$ with $2g-2+n>0$, 
\begin{equation}
\kappa_0 = (2g-2+n) [1]_{\Moduli_{g,n}}.
\end{equation}
\end{lemma}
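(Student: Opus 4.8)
The plan is to unwind the definition and reduce everything to a single fiberwise degree computation. By Definition \ref{def-kappa} with $i=0$ we have $\kappa_0 = \pi_{n+1\ast}(\psi_{n+1})$, where $\pi_{n+1}\colon\Moduli_{g,n+1}\to\Moduli_{g,n}$ is the forgetful morphism, which is the universal curve. Since $\pi_{n+1}$ has one-dimensional fibers, $\pi_{n+1\ast}(\psi_{n+1})$ lies in $A^0(\Moduli_{g,n})$; as $\Moduli_{g,n}$ is irreducible, this group is freely generated by $[1]_{\Moduli_{g,n}}$, so $\pi_{n+1\ast}(\psi_{n+1}) = a\,[1]_{\Moduli_{g,n}}$ for a single rational number $a$, and the entire content of the lemma is the identity $a = 2g-2+n$.

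To pin down $a$ I would restrict to a general fiber of $\pi_{n+1}$: the codimension-zero class $\pi_{n+1\ast}(\psi_{n+1})$ is determined by its degree on such a fiber, so if $j\colon C\hookrightarrow\Moduli_{g,n+1}$ denotes the inclusion of the fiber of the universal curve over a moduli point $(C;p_1,\dots,p_n)$, then $a = \deg\bigl(j^\ast\psi_{n+1}\bigr)$. It thus remains to identify the line bundle $j^\ast\psi_{n+1}$ on $C$, and for this I would invoke the standard comparison between the cotangent line at the moving point and the relative dualizing sheaf of the universal curve, namely
\[
\psi_{n+1} \;=\; c_1\!\left(\omega_{\pi_{n+1}}\right) \;+\; \sum_{i=1}^{n} D_{i,n+1}
\]
on $\Moduli_{g,n+1}$, where the correction terms $D_{i,n+1}$ account for the rational bubbles that sprout when the moving point collides with $p_i$ (see e.g.\ \cite{ac:cag}). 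Restricting along $j$, we have $j^\ast\omega_{\pi_{n+1}} = \omega_C$ and $j^\ast D_{i,n+1} = [p_i]$ (the section $\sigma_i$ meets the fiber transversally at $p_i$), so $j^\ast\psi_{n+1} = \omega_C(p_1+\cdots+p_n)$, a line bundle of degree $(2g-2)+n$ on the arithmetic-genus-$g$ curve $C$. Hence $a = 2g-2+n$, which proves the lemma. When $n=0$ the sum is empty and $\psi_1 = c_1(\omega_{\pi_1})$ directly, as recorded in the remark following Definition \ref{def-kappa}, giving $a = 2g-2$, consistent with the formula.

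The only non-formal input here is the displayed comparison formula for $\psi_{n+1}$, and I expect citing or verifying it (the discrepancy between the cotangent line and $\sigma_{n+1}^\ast\omega$ being supported precisely on the bubbling loci $D_{i,n+1}$) to be the main point; the rest is bookkeeping. An alternative packaging would be to quote the dilaton equation $\pi_{n+1\ast}(\psi_{n+1}\psi^K)=(2g-2+n)\psi^K$ and specialize to $K=0$, but since that $K=0$ case is exactly the present statement, it seems cleanest to argue directly via the fiberwise degree as above rather than appeal to the more general identity.
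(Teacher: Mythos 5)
Your argument is correct and is essentially the paper's own proof: both reduce $\kappa_0$ to a multiple of the fundamental class by irreducibility and then compute the multiple fiberwise as $\deg\bigl(\omega_C(p_1+\cdots+p_n)\bigr)=2g-2+n$. Your explicit use of the comparison $\psi_{n+1}=c_1(\omega_{\pi_{n+1}})+\sum_i D_{i,n+1}$ simply spells out the step the paper compresses into ``by projection formula.''
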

\begin{proof}
The  space $\Moduli_{g,n}$ is proper, connected, and irreducible, hence the class $\kappa_0$ must be a multiple of the fundamental class. Fixing a moduli point $[(C; p_1, \ldots, p_n)]\in \Moduli_{g,n}$, by projection formula the multiple is computed as $deg(\omega_C(p_1+ \ldots+ p_n)) = 2g-2+n$.
\end{proof}

\begin{lemma}[Dilaton] \label{dil}
Consider the forgetful morphism $\pi_{n+1}: \Moduli_{g,n+1} \to \Moduli_{g,n}$. Let $K \in \bZ^n$ denote a vector of non-negative integers  $(k_1, \ldots, k_n)\not= (0,\ldots, 0)$.  Then:
\begin{equation}
\pi_{n+1 \ast}(\psi^K\psi_{n+1}) = (2g-2+n) \psi^{K}. 
\end{equation}
\begin{proof}
By Lemma \ref{lem-comp} and the basic vanishing $\psi_{n+1} D_{i,n+1}=0$, we have
$$
\psi^K\psi_{n+1} = \pi_{n+1}^\ast (\psi^K)\psi_{n+1}.
$$
The proof is concluded by applying projection formula and using Lemma \ref{ka0}.
\end{proof}

\end{lemma}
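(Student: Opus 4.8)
The plan is to reduce the class $\psi^K\psi_{n+1}$ on $\Moduli_{g,n+1}$ to a pull-back from $\Moduli_{g,n}$ multiplied by $\psi_{n+1}$, and then to conclude by the projection formula together with Lemma \ref{ka0}. As a first step I would invoke the comparison Lemma \ref{lem-comp} to rewrite each factor $\psi_i$ with $i\in[n]$ occurring in $\psi^K$ (now viewed on $\Moduli_{g,n+1}$) as $\psi_i = \pi_{n+1}^\ast(\psi_i)+D_{i,n+1}$.

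The key geometric input I would establish next is the ``basic vanishing'' $\psi_{n+1}\cdot D_{i,n+1}=0$ for every $i\in[n]$. Since $D_{i,n+1}$ is the image of the tautological section $\sigma_i$, which is a closed immersion, one has $[D_{i,n+1}]=\sigma_{i\ast}(1)$ and hence $\psi_{n+1}\cdot[D_{i,n+1}]=\sigma_{i\ast}(\sigma_i^\ast\psi_{n+1})$ by the projection formula. But along $\sigma_i$ the mark $n+1$ sits on a rational bubble carrying exactly three special points ($p_i$, $p_{n+1}$, and the node), so its cotangent line is pulled back from $\Moduli_{0,3}$, a point, and is therefore trivial; that is, $\sigma_i^\ast\psi_{n+1}=0$, whence $\psi_{n+1}\cdot D_{i,n+1}=0$. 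This is the one place where the standard, but easily-misremembered, fact that $\psi$ classes restrict to $\psi$ classes on boundary strata (the mark $n+1$ does not migrate to the node) is genuinely used.

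With this in hand I would expand $\psi^K=\prod_{i=1}^n\big(\pi_{n+1}^\ast(\psi_i)+D_{i,n+1}\big)^{k_i}$, multiply through by $\psi_{n+1}$, and discard every monomial that still carries a boundary factor $D_{i,n+1}$, arriving at $\psi^K\psi_{n+1}=\pi_{n+1}^\ast(\psi^K)\,\psi_{n+1}$. (The hypothesis $K\neq 0$ is precisely what guarantees there is at least one factor to rewrite; the case $K=0$ is literally the content of Lemma \ref{ka0} and the definition of $\kappa_0$.) Then the projection formula yields
\begin{equation*}
\pi_{n+1\ast}(\psi^K\psi_{n+1}) = \psi^K\cdot\pi_{n+1\ast}(\psi_{n+1}) = \psi^K\cdot\kappa_0,
\end{equation*}
where $\kappa_0 = \pi_{n+1\ast}(\psi_{n+1})$ by Definition \ref{def-kappa}, and Lemma \ref{ka0} identifies $\kappa_0=(2g-2+n)[1]$, which finishes the proof.

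I expect the only delicate step to be the vanishing $\psi_{n+1}D_{i,n+1}=0$; everything else is routine bookkeeping with the projection formula and with the expansion of $\psi^K$ after applying Lemma \ref{lem-comp}. In particular, no new combinatorics is needed, and the argument runs in parallel to the proof of the string recursion (Lemma \ref{str}), the only difference being that here the presence of the extra $\psi_{n+1}$ kills the boundary corrections outright rather than requiring a second application of the projection formula.
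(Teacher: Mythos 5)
Your proposal is correct and follows essentially the same route as the paper: Lemma \ref{lem-comp} together with the vanishing $\psi_{n+1}D_{i,n+1}=0$ to get $\psi^K\psi_{n+1}=\pi_{n+1}^\ast(\psi^K)\psi_{n+1}$, then the projection formula and Lemma \ref{ka0}. The only difference is that you spell out the proof of the vanishing via $\sigma_i^\ast\psi_{n+1}=0$, which the paper takes as a known basic fact.
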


\begin{lemma}[$\omega$-dilaton]\label{odil}
Let $g+n\geq 2$, and consider the forgetful morphism $\pi_{n+1}: \Moduli_{g,n+1} \to \Moduli_{g,n}$. Let $K \in \bZ^n$ denote a vector of non-negative integers  $(k_1, \ldots, k_n)$.  Then:
\begin{equation}\label{se2}
\pi_{n+1 \ast}(\omega^K\omega_{n+1}) = (2g-2) \omega^{K}. 
\end{equation}
\end{lemma}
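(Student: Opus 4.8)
The plan is to reduce the statement, via the projection formula, to the single computation $\pi_{n+1\ast}(\omega_{n+1}) = (2g-2)\,[1]_{\Moduli_{g,n}}$ on $\Moduli_{g,n}$, and then to evaluate that using the comparison \eqref{ompsirel} between $\omega$ and $\psi$ classes together with Lemma \ref{ka0}.

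First I would observe that, for $i\in[n]$, the forgetful morphism $\rho_i\colon\Moduli_{g,n+1}\to\Moduli_{g,\{i\}}$ factors as $\rho_i=\rho_i'\circ\pi_{n+1}$, where $\rho_i'\colon\Moduli_{g,n}\to\Moduli_{g,\{i\}}$ is the analogous forgetful morphism on $\Moduli_{g,n}$. Hence $\omega_i=\pi_{n+1}^\ast\omega_i$ for every $i\in[n]$, so $\omega^K=\pi_{n+1}^\ast(\omega^K)$, and the projection formula gives
$$\pi_{n+1\ast}(\omega^K\omega_{n+1})=\pi_{n+1\ast}\bigl(\pi_{n+1}^\ast(\omega^K)\cdot\omega_{n+1}\bigr)=\omega^K\cdot\pi_{n+1\ast}(\omega_{n+1}).$$
So it suffices to prove $\pi_{n+1\ast}(\omega_{n+1})=(2g-2)\,[1]_{\Moduli_{g,n}}$. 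Note that this class already lives in the top-dimensional Chow group of the irreducible variety $\Moduli_{g,n}$, so it is automatically a multiple of the fundamental class; the content is the value of the coefficient.

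Next I would apply \eqref{ompsirel} on $\Moduli_{g,n+1}$ to write $\omega_{n+1}=\psi_{n+1}-\sum_{B\ni n+1}D(A|B)$, the sum running over the rational tails divisors with the $(n+1)$-st mark on the rational component. Pushing forward, $\pi_{n+1\ast}(\psi_{n+1})=\kappa_0=(2g-2+n)\,[1]$ by the definition of $\kappa_0$ and Lemma \ref{ka0}, so it remains to show $\sum_{B\ni n+1}\pi_{n+1\ast}(D(A|B))=n\,[1]$. I would split the sum according to $|B|$. If $|B|=2$, so $B=\{j,n+1\}$ with $j\in[n]$, then, since $\Moduli_{0,3}$ is a point and the relevant dual graph has trivial automorphism group, $D(A|B)=gl_{\Gamma\ast}(1)$ with source $\cong\Moduli_{g,n}$, and $\pi_{n+1}\circ gl_\Gamma$ is the identity (forgetting $n+1$ contracts the now–bivalent rational component), so each such term pushes forward to $[1]$; there are exactly $n$ of them. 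If $|B|\geq 3$, then forgetting the $(n+1)$-st mark does not contract the rational component, so $\pi_{n+1}$ carries $D(A|B)$ into the proper boundary divisor $D(A|B\smallsetminus\{n+1\})$ of $\Moduli_{g,n}$, that is, onto a subvariety of dimension strictly smaller than $\dim D(A|B)$; hence that push-forward vanishes. Summing, $\pi_{n+1\ast}(\omega_{n+1})=(2g-2+n)\,[1]-n\,[1]=(2g-2)\,[1]$, which together with the displayed projection-formula identity completes the proof. (The degenerate case $n=0$ is included: the sum over $B$ is empty and $\pi_1{}_\ast(\omega_1)=\pi_1{}_\ast(\psi_1)=\kappa_0=(2g-2)\,[1]$ directly.)

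The step I expect to require the most care is the analysis of $\pi_{n+1\ast}(D(A|B))$: one must correctly identify which rational tails divisors are contracted by $\pi_{n+1}$ (dimension drop, hence push-forward $0$) versus which map finitely — in fact birationally, with degree exactly $1$ — onto $\Moduli_{g,n}$, and check that there are precisely $n$ of the latter. An alternative to this boundary bookkeeping, which I would at least mention as a remark, is to compute the coefficient directly in the spirit of Lemma \ref{ka0}: restrict $\omega_{n+1}$ to a fiber of $\pi_{n+1}$ over a generic moduli point $[(C;p_1,\dots,p_n)]$; that fiber is the smooth curve $C$, the restriction of $\rho_{n+1}$ to it is the canonical map $C\hookrightarrow\UF_g\cong\Moduli_{g,1}$, and hence $\omega_{n+1}|_C=c_1(\omega_C)$ has degree $2g-2$, which pins down the coefficient at once.
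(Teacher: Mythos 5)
Your proof is correct, but it follows a different route from the paper's. The paper, for $g\geq 2$, uses the commutative square \eqref{diapb} and the push--pull compatibility $\pi_{n+1\ast}\rho_{n+1}^\ast(\psi_{n+1}) = F^\ast\pi_{n+1\ast}(\psi_{n+1})$ to identify $\pi_{n+1\ast}(\omega_{n+1})$ directly with the pullback of $\kappa_0 = (2g-2)[1]_{\Moduli_g}$, and then treats $g=1$ separately via $\omega_i=\lambda_1$ and $\lambda_1^2=0$; you instead expand $\omega_{n+1}=\psi_{n+1}-\sum_{B\ni n+1}D(A|B)$ using \eqref{ompsirel} and do the boundary bookkeeping: $\pi_{n+1\ast}(\psi_{n+1})=\kappa_0=(2g-2+n)[1]$ by Definition \ref{def-kappa} and Lemma \ref{ka0}, the $n$ divisors with $|B|=2$ each push forward birationally to $[1]$ (strictly speaking $\pi_{n+1}\circ gl_\Gamma$ is an isomorphism given by relabeling $\bullet$ as $j$, not literally the identity, but this is harmless), and those with $|B|\geq 3$ die for dimension reasons. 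Your argument buys uniformity in the genus (it covers $g=1$ without invoking $\lambda_1$, since the relation \eqref{ompsirel} holds there and delivers $\pi_{n+1\ast}(\omega_{n+1})=0$) and uses only ingredients already proved in the paper, whereas the paper's argument is shorter once one grants the base-change identity for \eqref{diapb} (asserted without comment there) and has the conceptual merit of exhibiting $\pi_{n+1\ast}(\omega_{n+1})$ as $F^\ast\kappa_0$, in the same spirit as Lemma \ref{oka}. Your closing remark, computing the coefficient by restricting $\omega_{n+1}$ to a fiber $C$ of $\pi_{n+1}$ and getting $\deg\omega_C=2g-2$, is essentially the fiberwise shadow of the paper's diagram argument and of the degree computation in Lemma \ref{ka0}. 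Both proofs rely on the common reduction $\omega^K=\pi_{n+1}^\ast(\omega^K)$ plus the projection formula, and neither addresses the (vacuous, since $\omega$ classes require $g\geq 1$) case $g=0$ allowed by the hypothesis $g+n\geq 2$.
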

\begin{proof}
First assume $g\geq 2$, and consider the following commutative diagram:
\begin{equation}\label{diapb}
\xymatrix{
\Moduli_{g, n+1} \ar[r]^{\pi_{n+1}} \ar[d]_{\rho_{n+1}} &\ar[d]^F \Moduli_{g, n}\\ 
\Moduli_{g, \{n+1\}} \ar[r]^{\pi_{n+1}} & \Moduli_{g}.
}
\end{equation}
We have $\pi_{n+1 \ast}(\omega_{n+1}) =\pi_{n+1 \ast}\rho_{n+1}^\ast (\psi_{n+1}) = F^\ast \pi_{n+1 \ast} (\psi_{n+1})= (2g-2) [1]_{\Moduli_{g,n}} $. Equation \eqref{se2} then follows by projection formula.

When $g=1$, any monomial in $\omega$ classes of degree greater than one vanishes because $\omega_i = \lambda_1$ and $\lambda_1^2 = 0$ (\cite[(5.4)]{m:taegotmsoc}). Similarly, $\pi_{n+1\ast}(\omega_i) = 0$, since $\lambda_1$ is pulled-back from $\Moduli_{1,1}$.
\end{proof}

The proof of Lemma \ref{odil} generalizes to give a natural relation between $\omega$ and $\kappa$ classes.
\begin{lemma}\cite[Lem. 3.3]{BC:omega}\label{oka}
Let $g\geq 2$, and consider the total forgetful morphism $F: \Moduli_{g,n} \to \Moduli_{g}$. Let $K \in \bZ^n$ denote a vector of non-negative integers  $(k_1, \ldots, k_n)$ and $\vec{1} = (1,1, \ldots, 1)$.  Then:
\begin{equation}
F_{ \ast}(\omega^K) =  \kappa_{K-\vec{1}}. 
\end{equation}

\end{lemma}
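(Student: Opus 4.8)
The plan is to proceed by induction on $n$, pushing forward one mark at a time, and using the already-established Lemmas \ref{odil} and \ref{ka0} as the engine. The base case $n=0$ is the statement that $F$ is the identity on $\Moduli_g$ and $\kappa_{K - \vec{1}}$ with $K$ the empty vector is interpreted as $[1]_{\Moduli_g}$, which is vacuous. For the inductive step, factor the total forgetful morphism as $F = F' \circ \pi_{n+1}$, where $\pi_{n+1}: \Moduli_{g,n+1} \to \Moduli_{g,n}$ forgets the last mark and $F': \Moduli_{g,n} \to \Moduli_g$ is the total forgetful morphism on $n$ marks. Then $F_*(\omega^K) = F'_*(\pi_{n+1 \,*}(\omega^K))$, so everything reduces to understanding $\pi_{n+1\,*}$ of a monomial in $\omega$ classes.

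The main computational step is therefore to compute $\pi_{n+1\,*}(\omega_1^{k_1} \cdots \omega_n^{k_n} \omega_{n+1}^{k_{n+1}})$. I would split into two cases according to whether $k_{n+1} = 0$ or $k_{n+1} \geq 1$. If $k_{n+1} = 0$, Lemma \ref{ostring} ($\omega$-string) gives that this push-forward vanishes; but this cannot arise if $K$ has all entries $\geq 1$ — which suggests that the cleanest formulation is actually to induct only over vectors with strictly positive entries, or else to interpret the statement so that $\kappa$ with a negative index is zero (consistent with the convention in Lemma \ref{str}) and check that $\omega$-string matches this. If $k_{n+1} \geq 1$, I want to reduce the exponent on $\omega_{n+1}$ down to $1$ and then apply the $\omega$-dilaton Lemma \ref{odil}. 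The key observation making this work is that $\omega_{n+1} = \rho_{n+1}^* \psi_{n+1}$ is pulled back from $\Moduli_{g,\{n+1\}}$, and more importantly that $\omega_i$ for $i \leq n$ on $\Moduli_{g,n+1}$ equals $\pi_{n+1}^*(\omega_i)$ by definition of the $\omega$ classes (they are pulled back all the way from $\Moduli_{g,\{i\}}$, so in particular from $\Moduli_{g,n}$). Hence $\omega^K \omega_{n+1}^{k_{n+1}} = \pi_{n+1}^*(\omega^K) \cdot \omega_{n+1}^{k_{n+1}}$, and by the projection formula $\pi_{n+1\,*}(\omega^K \omega_{n+1}^{k_{n+1}}) = \omega^K \cdot \pi_{n+1\,*}(\omega_{n+1}^{k_{n+1}})$. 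So it remains to compute $\pi_{n+1\,*}(\omega_{n+1}^{k_{n+1}})$ on $\Moduli_{g,n+1} \to \Moduli_{g,n}$, i.e. (via the diagram \eqref{diapb}) to compute $\pi_{n+1\,*}(\psi_{n+1}^{k_{n+1}})$ on $\Moduli_{g,1} \to \Moduli_g$ and pull back along $F'$, which by Mumford's original definition of $\kappa$ classes (see the Remark following Definition \ref{def-kappa}, using $\psi_1 = c_1(\omega_\pi)$ on $\Moduli_{g,1}$) is exactly $\kappa_{k_{n+1} - 1}$.

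Assembling the pieces: $F_*(\omega^K) = F'_*\big(\omega_1^{k_1} \cdots \omega_n^{k_n} \cdot (F')^* \kappa_{k_{n+1}-1}\big)$, and by the projection formula this equals $\kappa_{k_{n+1}-1} \cdot F'_*(\omega_1^{k_1} \cdots \omega_n^{k_n}) = \kappa_{k_{n+1}-1} \cdot \kappa_{(k_1,\ldots,k_n) - \vec{1}}$ by the inductive hypothesis, where I am using the standard shorthand $\kappa_{(a_1,\ldots,a_m)} := \kappa_{a_1}\cdots\kappa_{a_m}$. This is precisely $\kappa_{K - \vec 1}$. I expect the main obstacle to be purely bookkeeping: being careful that all the $\omega$ classes in sight genuinely are pulled back from the appropriate lower moduli space (so that the projection formula applies cleanly), and handling the genus-one degeneration exactly as in the proof of Lemma \ref{odil}, where $\omega_i = \lambda_1$, $\lambda_1^2 = 0$, and $\pi_{n+1\,*}(\omega_i) = 0$, so that the formula still holds with the understanding that monomials of total $\omega$-degree exceeding one already vanish upstairs. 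The one genuinely delicate point is the convention at the boundary: whether $F_*(\omega^K)$ with some $k_i$ possibly forced by dimension reasons gives the right power of $\kappa$, but since Lemma \ref{oka} is stated only for $g \geq 2$ and for a vector $K$ of non-negative integers with the interpretation $\kappa_{K-\vec 1} = \prod \kappa_{k_i - 1}$ (and $\kappa_{-1} = 0$), the string-equation case $k_i = 0$ is consistent with $\kappa_{-1} = 0$ and no contradiction arises.
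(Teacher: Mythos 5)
Your proposal is correct and follows essentially the route the paper itself indicates: the paper gives no detailed proof of Lemma \ref{oka}, citing \cite[Lem.\ 3.3]{BC:omega} and noting that the proof of Lemma \ref{odil} generalizes, and your argument is precisely that generalization — peel off one mark at a time, use that the remaining $\omega$ classes are pulled back under $\pi_{n+1}$, apply the projection formula, and compute $\pi_{n+1\,*}(\omega_{n+1}^{k})=F'^{*}\pi_{*}(\psi^{k})=F'^{*}\kappa_{k-1}$ via the diagram \eqref{diapb}, exactly as in the $\omega$-dilaton proof. Your care with the $k_i=0$ case (consistent with the convention $\kappa_{-1}=0$) is sensible and causes no difficulty.
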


In \cite{BC:omega} a graph formula is used to express an arbitrary monomial in $\omega$ classes in terms of a special class of boundary strata appropriately decorated with $\psi$ classes.  Denote by $\calP \vdash [n]$ an unordered partition of the set $[n]$, i.e. a collection of  pairwise disjoint non-empty subsets $P_1,\dots, P_r$ such that 
$$
P_1 \cup \ldots \cup P_r = [n].
$$
Given $\calP \vdash [n]$, when $|P_i| = 1$  denote by $\bullet_i$ the element of the singleton $P_i$.  For $|P_i|>1$, introduce new labels $\bullet_i$ and $\star_i$. The \emph{pinwheel stratum} $\Delta_{\calP}$ is the image of the gluing morphism
$$
gl_{\calP}: \Moduli_{g, \{\bullet_1, \ldots, \bullet_r\}} \times \prod_{|P_i|>1} \Moduli_{0, \{\star_i\}\cup P_i} \to \Moduli_{g,n}
$$
that glues together each $\bullet_i$ with $\star_i$. The class of the stratum equals the push-forward of the fundamental class via $gl_{\calP}$.

\begin{proposition}\cite[Thm. 2.2]{BC:omega}
\label{omega}
 For $1\leq i \leq n$, let $k_i$ be a non-negative integer, and let $K = \sum_{i=1}^n k_i$. 
For any partition $\calP  = \{P_1,\dots, P_r \}
\vdash [n]$, define $\alpha_j := \sum_{i\in P_j} k_i$.
With notation  as in the previous paragraph, the following formula holds in  $R^{K}(\Moduli_{g,n})$:
\begin{align}
\label{omegafor}
\prod_{i=1}^n\omega_i^{k_i} = \sum_{\calP \,\vdash [n]}[\Delta_{\calP}]\prod_{j=1}^{\ell(\calP)} \frac{\psi_{\bullet_j}^{\alpha_j}}{(-\psi_{\bullet_j}-\psi_{\star_j})^{1-\delta_j}},
\end{align}
where $\delta_j = \delta_{1,|P_j|}$ is a Kronecker delta and
we follow the  convention of considering negative powers of $\psi$  equal to $0$.
\end{proposition}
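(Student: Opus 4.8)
The plan is to induct on the number of marked points $n$, for fixed genus $g$. When $n=1$ the morphism $\rho_1$ is the identity, so $\omega_1=\psi_1$ on $\Moduli_{g,1}$; the only partition of $[1]$ is the singleton, $[\Delta_{\calP}]=1$, and the decoration is $\psi_{\bullet_1}^{k_1}=\psi_1^{k_1}$, so the formula holds. For the inductive step, set $\pi:=\pi_{n+1}\colon\Moduli_{g,n+1}\to\Moduli_{g,n}$. Since forgetful morphisms compose, $\omega_i=\rho_i^\ast\psi_i$ is pulled back from $\Moduli_{g,\{i\}}$, and hence $\omega_i=\pi^\ast\omega_i$ for $i\le n$; therefore
\[
\prod_{i=1}^{n+1}\omega_i^{k_i}=\omega_{n+1}^{k_{n+1}}\cdot\pi^\ast\Big(\prod_{i=1}^{n}\omega_i^{k_i}\Big),
\]
and by the inductive hypothesis the second factor equals the combination of decorated pinwheel strata on $\Moduli_{g,n}$ supplied by \eqref{omegafor}. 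So it remains to (1) pull a decorated pinwheel stratum back along $\pi$, and (2) multiply the result by $\omega_{n+1}^{k_{n+1}}$.

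For step (1): $\pi$ is flat, so $\pi^\ast[\Delta_{\calP}]$ is the sum of the classes of the components of $\pi^{-1}(\Delta_{\calP})$ of the expected codimension, which are exactly the pinwheel strata obtained by placing the new mark $n+1$ either directly on the central genus-$g$ vertex (producing the partition with $\{n+1\}$ as a new block) or on one of the rational blades $P_j$ (producing the partition with $n+1$ adjoined to $P_j$); the deeper strata in which $n+1$ bubbles off at a node do not appear. The decoration is carried on the central and blade vertices, and forgetting $n+1$ from such a vertex transforms it via Lemma~\ref{lem-comp}: if $n+1$ sits on the central vertex then $\psi_{\bullet_j}\mapsto\psi_{\bullet_j}-D_{\bullet_j,n+1}$ with $\psi_{\star_j}$ unchanged, while if $n+1$ sits on the $j$-th blade then $\psi_{\star_j}\mapsto\psi_{\star_j}-D_{\star_j,n+1}$ with $\psi_{\bullet_j}$ unchanged. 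Using the vanishings $\psi_{\bullet_j}D_{\bullet_j,n+1}=0$ and $\psi_{\star_j}D_{\star_j,n+1}=0$, the corrected numerator $\psi_{\bullet_j}^{\alpha_j}$ and denominator $(-\psi_{\bullet_j}-\psi_{\star_j})^{1-\delta_j}$ collapse back to their original form plus terms supported on the divisor where $n+1$ has bubbled off between the center and the $j$-th blade; one then checks that this non-pinwheel divisor receives equal and opposite contributions from the central and blade corrections, so all such terms cancel, leaving precisely the sum of decorated pinwheels on $\Moduli_{g,n+1}$ with $k_{n+1}=0$ predicted by \eqref{omegafor}.

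For step (2): $\omega_{n+1}=\rho_{n+1}^\ast\psi_{n+1}$, and $\rho_{n+1}$ restricted to a pinwheel stratum factors through the central vertex after collapsing every blade, so by the projection formula $gl_{\calP'}^\ast\omega_{n+1}$ is $\omega_{n+1}$ on the central vertex when $n+1$ is a direct mark, and $\omega_{\bullet_j}$ (the $\omega$-class of the node-mark $\bullet_j$) when $n+1$ lies on the $j$-th blade. Thus multiplying by $\omega_{n+1}$ amounts to multiplying the central-vertex decoration by such an $\omega$-class; applying the relation $\psi=\omega+(\text{rational-tails divisors})$ of \eqref{ompsirel} on the central vertex, one rewrites $\omega_{n+1}\cdot\psi_{\bullet_j}^{\alpha_j}$ as $\psi_{\bullet_j}^{\alpha_j+1}$ modulo strata that reorganize into deeper pinwheels, so after $k_{n+1}$ such steps the exponent of the block containing $n+1$ has increased by $k_{n+1}$ and one recovers exactly \eqref{omegafor} on $\Moduli_{g,n+1}$. (One can also fold step (2) into a second induction on $\sum_i k_i$.)

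The step I expect to be the main obstacle is the cancellation in (1): one must verify that every stratum produced by the Lemma~\ref{lem-comp} corrections that is not a pinwheel is cancelled by another such term. This is forced by the fact that the factor $(-\psi_{\bullet_j}-\psi_{\star_j})^{-1}$ is tailored to the first Chern class of the normal bundle of the gluing morphism, so the corrections on the two sides of a node differ by a sign; making this precise uniformly over all blades, and over the iterated bubbling that occurs when several $k_i$ are large, is where the bulk of the bookkeeping lies. A cleaner alternative worth pursuing in parallel is to use the birational morphism from $\Moduli_{g,n}$ to the $n$-fold fibered power of the universal curve, over which $\prod_i\omega_i^{k_i}$ is pulled back from an evident product class, and then to control the exceptional corrections of the stabilization morphisms, trading the combinatorics above for the geometry of those blow-downs.
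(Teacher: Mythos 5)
This statement is quoted from \cite[Thm.\ 2.2]{BC:omega}; the present paper gives no proof of it, so the only comparison available is with the cited source, and your proposal must stand on its own. As it stands it is a plan rather than a proof: the induction skeleton (pull back along $\pi_{n+1}$ using $\omega_i=\pi_{n+1}^\ast\omega_i$, control the corrections from Lemma~\ref{lem-comp} and the normal bundle of $gl_{\calP}$, then multiply by $\omega_{n+1}^{k_{n+1}}$) is sensible, but the two places where the actual content of the theorem lives are asserted, and the assertions are not quite right as stated. In step (1) you claim the Lemma~\ref{lem-comp} corrections ``all cancel, leaving precisely the sum of decorated pinwheels with $k_{n+1}=0$''. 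They cannot all cancel: the target formula on $\Moduli_{g,n+1}$ includes partitions in which $n+1$ is joined to a previously singleton part $\{i\}$, and those terms arise \emph{only} from the correction divisors $D_{i,n+1}$ and their self-intersections. Already for $n=1$ one has $\omega_1^{k}=(\psi_1-D_{12})^{k}=\psi_1^{k}-gl_\ast\bigl(\psi_\bullet^{k-1}\bigr)$, where the second term is exactly the partition $\{1,2\}$ contribution $[\Delta]\,\psi_\bullet^{k}/(-\psi_\bullet-\psi_\star)$; so the corrections must partly survive and assemble, via the relation $D^2=gl_\ast(-\psi_\bullet-\psi_\star)$, into the expanded $1/(-\psi_\bullet-\psi_\star)$ decorations. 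Your sketch never engages with these iterated self-intersection terms, which is precisely what the denominator in \eqref{omegafor} is encoding.

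Step (2) has the same problem in sharper form. Restricting $\omega_{n+1}$ to a pinwheel and rewriting it via \eqref{ompsirel} on the central vertex produces rational-tails divisors of the central factor whose push-forwards are strata with a rational component carrying several blades or several node-marks; these are \emph{not} pinwheel strata and cannot ``reorganize into deeper pinwheels'' -- the right-hand side of \eqref{omegafor} contains only pinwheels, so every such term has to cancel against other corrections (or against the self-intersection terms from step (1)), and establishing that cancellation, uniformly in all blades and all exponents $k_i$, is the theorem. Since you flag exactly this bookkeeping as the ``main obstacle'' and leave it unresolved, the proposal has a genuine gap: the mechanism by which non-pinwheel strata cancel and by which surviving corrections reproduce the precise series $\psi_{\bullet_j}^{\alpha_j}/(-\psi_{\bullet_j}-\psi_{\star_j})^{1-\delta_j}$ is the missing core, and for the details you should consult the proof in \cite{BC:omega} itself.
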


Specializing to monomials of top degree and using projection formula one can deduce a formula relating intersection numbers of $\kappa$ classes on $\Moduli_{g}$ with intersection numbers of $\psi$ classes. 

\begin{corollary}\cite[Cor. 7.10]{alexeev-guy}\cite[Thm 3.1, Cor. 3.5]{BC:omega}
\label{num}
For $1\leq i \leq n$, let $k_i$ be a non-negative integer, and let $ \sum_{i=1}^n k_i = 3g-3+n$. 
For any partition $\calP  = \{P_1,\dots, P_r \}
\vdash [n]$, define $\alpha_j := \sum_{i\in P_j} k_i$.
\begin{align}
\label{omegafor2}
\int_{\Moduli_{g}}\prod_{i=1}^n\kappa_{k_i-1}=\int_{\Moduli_{g,n}}\prod_{i=1}^n\omega_i^{k_i} = \sum_{\calP \,\vdash [n]}(-1)^{n+\ell(\calP)} \int_{\Moduli_{g,\ell(\calP)}}\prod_{i=1}^{\ell(\calP)} \psi_{\bullet_i}^{\alpha_i-|P_i|+1}.
\end{align}
\end{corollary}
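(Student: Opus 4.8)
The plan is to integrate the class-level identity of Proposition \ref{omega} over $\Moduli_{g,n}$ and push each summand onto the normalization of the corresponding pinwheel stratum. Throughout take $g\geq 2$, so that $\Moduli_g$ makes sense (the low-genus situations being empty or vacuous). For the first equality, note that the hypothesis $\sum_i k_i = 3g-3+n = \dim\Moduli_{g,n}$ makes $\prod_i\omega_i^{k_i}$ a top class, so applying the total forgetful morphism $F\colon\Moduli_{g,n}\to\Moduli_g$ and the fact that proper pushforward preserves degrees gives $\int_{\Moduli_{g,n}}\prod_i\omega_i^{k_i} = \int_{\Moduli_g}F_*\big(\prod_i\omega_i^{k_i}\big)$; by Lemma \ref{oka} the integrand on the right is $\prod_i\kappa_{k_i-1}$, and since $\sum_i(k_i-1) = 3g-3 = \dim\Moduli_g$ this is again a top intersection, giving the first equality.

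For the second equality I would start from \eqref{omegafor}, reading each $\calP$-summand as $gl_{\calP\,*}\big(\prod_{j}\frac{\psi_{\bullet_j}^{\alpha_j}}{(-\psi_{\bullet_j}-\psi_{\star_j})^{1-\delta_j}}\big)$, where the $\psi$ classes now live on the source $\Moduli_{g,\ell(\calP)}\times\prod_{|P_j|>1}\Moduli_{0,\{\star_j\}\cup P_j}$ of the gluing map (legitimate since $[\Delta_{\calP}]=gl_{\calP\,*}(1)$, pinwheel graphs having no automorphisms). Integrating over $\Moduli_{g,n}$ turns the $\calP$-summand into the integral of the decoration over this source. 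I would then expand each mixed denominator as a geometric series,
\[\frac{\psi_{\bullet_j}^{\alpha_j}}{-\psi_{\bullet_j}-\psi_{\star_j}} \;=\; -\sum_{m\geq 0}(-1)^{m}\,\psi_{\star_j}^{m}\,\psi_{\bullet_j}^{\alpha_j-m-1},\]
a finite sum under the convention that negative powers of $\psi$ vanish, and use the Künneth decomposition to split the integral over the source into a product of integrals over its factors.

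The heart of the argument is a dimension count on the rational petals: the only class reaching the $j$-th factor $\Moduli_{0,\{\star_j\}\cup P_j}$ (of dimension $|P_j|-2$) is the single power $\psi_{\star_j}^{m}$, so the factored integral vanishes unless $m=|P_j|-2$ for every non-singleton part, and then $\int_{\Moduli_{0,\{\star_j\}\cup P_j}}\psi_{\star_j}^{|P_j|-2}=1$. This collapses each series to its one surviving term, leaving $\int_{\Moduli_{g,\ell(\calP)}}$ of a monomial in the $\psi_{\bullet_j}$. Bookkeeping then finishes the proof: the exponent on $\psi_{\bullet_j}$ is $\alpha_j-(|P_j|-2)-1 = \alpha_j-|P_j|+1$, which also reads correctly as $\psi_{\bullet_j}^{k_i}$ for a singleton $P_j=\{i\}$ and so is the uniform exponent in the statement; and the $j$-th part contributes the sign $-(-1)^{|P_j|-2}=(-1)^{|P_j|+1}$ (with $1=(-1)^{|P_j|+1}$ trivially for singletons), so the $\calP$-summand carries the overall sign $\prod_j(-1)^{|P_j|+1} = (-1)^{n+\ell(\calP)}$, exactly as claimed.

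The main obstacle is not any deep input but the care needed to keep the formal inversion of $-\psi_{\bullet_j}-\psi_{\star_j}$ consistent with the negative-powers-vanish convention already built into Proposition \ref{omega}, so that the series genuinely terminates, and to verify that no decoration other than $\psi_{\star_j}^{m}$ migrates onto the rational factors — it is precisely this that makes the dimension estimate on each $\Moduli_{0,\{\star_j\}\cup P_j}$ isolate a single term and thereby kill the sums. One should also confirm the degree bookkeeping, e.g. $\sum_j(\alpha_j-|P_j|+1) = 3g-3+\ell(\calP) = \dim\Moduli_{g,\ell(\calP)}$, which checks that the surviving integrals are of top degree and makes the vanishing criterion meaningful.
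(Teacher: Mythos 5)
Your argument is correct and follows exactly the route the paper indicates for this corollary: specialize the graph formula of Proposition \ref{omega} to top degree, integrate each pinwheel summand over the source of $gl_{\calP}$ so that the dimension count on each rational petal $\Moduli_{0,\{\star_j\}\cup P_j}$ isolates the single term $\psi_{\star_j}^{|P_j|-2}$, and use Lemma \ref{oka} (projection formula) for the first equality. The paper itself only gestures at this in one sentence and cites \cite{alexeev-guy} and \cite{BC:omega} for the details, so your write-up is a faithful reconstruction of the intended proof, with the sign and exponent bookkeeping checking out.
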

\section{Generating functions and Differential Operators}
\label{sec:gf}

In this section we introduce generating functions for intersection numbers of $\psi, \omega$ and $\kappa$ classes, and we show that Corollary \ref{num} gives rise to functional equations relating these potentials. We first introduce the generating function for intersection numbers of $\psi$ classes, also known as the Gromov-Witten potential of a point.

Denote $\vec{\tau}:= \sum_{0}^{\infty} t_i \tau_i$, where the $t_i$'s are coordinates on a countably dimensional vector space $\bH^+$ with basis given by the vectors $\tau_i$. The Witten brackets, depending on $g,n$,  define  multilinear functions on $\bH^+$:
\begin{equation}
\langle \tau_0^{n_0}\cdots \tau_m^{n_m}\rangle_{g,n} = \int_{\overline{M}_{g,n}} \prod_{i= n_0+1}^{n_0+n_1}\psi_i \prod_{i= n_0+n_1+1}^{n_0+n_1+n_2}\psi_i^2 \cdot \ldots \cdot  \prod_{i= n_0+n_1+\ldots +n_{m-1}+1}^{n}\psi_i^m,
\end{equation}

 where $n = \sum_{j=0}^m n_j$
and
$
3g-3+n = \sum_{j=0}^m jn_j.
$

\begin{definition}
The {\bf genus $g$ Gromov-Witten potential of a point} is defined to be
\begin{align*}
\calF^g(t_0, t_1,\ldots) &=  \langle e^{\vec{\tau}}\rangle_g = \sum_{n=0}^{\infty} \frac{1}{n!} \langle {\vec{\tau}}, \ldots, {\vec{\tau}} \rangle_{g,n} .
\end{align*}

The {\bf total Gromov-Witten potential} is obtained by summing over all genera and adding a formal variable $u$ keeping track of genus (more precisely, of Euler characteristic):
\begin{align*}
\calF(u; t_0, t_1,\ldots) =  \sum_{g=0}^{\infty} u^{2g-2} \calF^g(t_0, t_1,\ldots) .
\end{align*}

\end{definition}

\begin{remark}
The Gromov-Witten potential is an exponential generating function for intersection numbers of $\psi$ classes. Because such intersection numbers are invariant under the action of the symmetric group permuting the marks, the variable $t_i$ does not refer to the insertion at the $i$-th mark, but rather to an insertion (at some mark) of $\psi^i$. For example, the intersection number $\int_{\Moduli_{2,6}} \psi_1^6\psi_2\psi_3\psi_4$ is the coefficient of the monomial $u^2\frac{t_0^2}{2!}\frac{t_1^3}{3!}t_6$ in $\mathcal{F}$.
\end{remark}

We define  similar generating functions for $\omega$ classes.
Denote $\vec{\sigma}:= \sum_{0}^{\infty} s_i \sigma_i$, and
\begin{equation}
\langle \sigma_0^{n_0}\cdots \sigma_m^{n_m}\rangle_{g,n} = \int_{\overline{M}_{g,n}} \prod_{i= n_0+1}^{n_0+n_1}\omega_i \prod_{i= n_0+n_1+1}^{n_0+n_1+n_2}\omega_i^2 \cdot \ldots \cdot  \prod_{i= n_0+n_1+\ldots +n_{m-1}+1}^{n}\omega_i^m.
\end{equation}

\begin{definition}
The {\bf genus $g$ $\omega$-potential of a point} is defined to be
\begin{align*}
\calS^g(s_0, s_1,\ldots) &=  \langle e^{\vec{\sigma}}\rangle_g = \sum_{n=0}^{\infty} \frac{1}{n!} \langle {\vec{\sigma}}, \ldots, {\vec{\sigma}} \rangle_{g,n} 
\end{align*}

The {\bf total $\omega$-potential}  is:
$$
\calS(u; s_0, s_1,\ldots) =  \sum_{g=1}^{\infty} u^{2g-2} \calS^g(s_0, s_1,\ldots) .
$$

\end{definition}

The analysis of push-forwards of $\omega$ classes yields some immediate results about the structure of $\calS$. 
\begin{lemma} \label{expo}
\begin{equation} \calS^g = e^{(2g-2)s_1}\tilde{\calS}^g(s_2, s_3, \ldots),
\end{equation}
where $\tilde{\calS}^g$ is some function depending only on variables $s_i$, with $i\geq 2$.
\end{lemma}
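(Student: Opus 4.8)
The plan is to extract the stated structure of $\calS^g$ directly from the $\omega$-string and $\omega$-dilaton lemmas (Lemmas~\ref{ostring} and~\ref{odil}), read off at the level of generating functions. Recall that $\calS^g(s_0,s_1,\ldots) = \sum_n \frac{1}{n!}\langle \vec\sigma,\ldots,\vec\sigma\rangle_{g,n}$, where an insertion of $\sigma_i$ corresponds to $\omega^i$ at some mark. First I would dispense with the variable $s_0$: an insertion of $\sigma_0$ is an insertion of $\omega^0 = 1$, i.e. a mark carrying no $\omega$ class, and Lemma~\ref{ostring} says pushing such a mark forward gives $0$. Hence every Witten bracket with at least one $\sigma_0$ insertion vanishes, so $\calS^g$ does not depend on $s_0$ at all (equivalently $\vardel_{s_0}\calS^g = 0$); this is consistent with the claimed formula, which has no $s_0$.

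Next I would handle $s_1$. The operator $\vardel_{s_1}$ corresponds to adding one extra mark carrying a single $\omega$ class. By Lemma~\ref{odil} (the $\omega$-dilaton lemma), $\pi_{n+1\ast}(\omega^K\omega_{n+1}) = (2g-2)\omega^K$, which at the level of the generating function translates precisely to the differential equation $\vardel_{s_1}\calS^g = (2g-2)\,\calS^g$. To see this cleanly I would write out $\vardel_{s_1}\langle e^{\vec\sigma}\rangle_g$, differentiating under the sum: the coefficient of each monomial in the remaining variables is $\frac{1}{(n-1)!}$ times a bracket of the form $\langle \omega^K\omega_{n+1}\rangle_{g,n}$ summed appropriately, and applying $\omega$-dilaton term-by-term recovers $(2g-2)$ times the corresponding coefficient of $\calS^g$. (One should note the hypothesis $g+n\geq 2$ in Lemma~\ref{odil} is harmless here: the only excluded case is $g=1,n=0$, i.e. the empty bracket, which contributes a constant killed by $\vardel_{s_1}$ anyway — and indeed for $g=1$ both sides of the desired ODE are forced to be compatible since $2g-2=0$.)

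Finally, the linear ODE $\vardel_{s_1}\calS^g = (2g-2)\calS^g$ in the single variable $s_1$ integrates immediately to $\calS^g = e^{(2g-2)s_1}\,\tilde\calS^g(s_0, s_2, s_3,\ldots)$ for some function $\tilde\calS^g$ independent of $s_1$; combining with the first step ($s_0$-independence) gives $\calS^g = e^{(2g-2)s_1}\,\tilde\calS^g(s_2,s_3,\ldots)$, which is the claim. I expect the only real bookkeeping obstacle to be the translation of the push-forward identities into statements about $\vardel_{s_i}$ acting on $\calS^g$ — i.e. verifying that differentiating the exponential generating function with respect to $s_1$ genuinely corresponds to inserting $\omega_{n+1}$ and pushing forward via $\pi_{n+1}$, keeping the combinatorial factors $\frac{1}{n!}$ straight and checking that the dimension constraint $3g-3+n = \sum j n_j$ is respected under the shift. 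This is the standard ``removing a point'' manipulation for such potentials, so it should be routine, but it is the step that requires care.
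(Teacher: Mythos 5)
Your argument is correct and coincides with the paper's own proof: Lemma \ref{ostring} gives independence of $s_0$, Lemma \ref{odil} gives the differential equation $\vardel_{s_1}\calS^g = (2g-2)\calS^g$, and integrating yields the stated exponential dependence. The extra bookkeeping you flag (translating the push-forward identities into $\vardel_{s_1}$ acting on the exponential generating function) is exactly the routine step the paper also treats as immediate.
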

\begin{proof}
Lemma \ref{ostring} implies that $\calS$ is constant in $s_0$; the statement of Lemma \ref{odil}  is equivalent to showing that $S^g$ satisfies the differential equation 
\begin{equation}
\frac{\partial{\calS^g}}{\partial s_1} = (2g-2) \calS^g,
\end{equation}
and therefore it depends exponentially on $s_1$.
\end{proof}

Finally, we introduce a potential for intersection numbers of $\kappa$ classes on $\Moduli_{g}$.
\begin{definition}\label{def:kapot}
Let $p_0, p_1, \ldots$ be a countable set of formal variables, and $\mu = (\mu_i)$ denote a partition of an integer such that $w(\mu):=\sum i\mu_i = 3g-3$. Then the {\bf $\kappa$-potential} is:
\begin{align*}
\calK(p_0, p_1, \ldots) = \frac{p_0}{24} + \sum_{\substack{g\geq 2 \\ w(\mu)= 3g-3}} \left( \int_{\Moduli_{g}}\prod \kappa_i^{\mu_i} \right) \prod\frac{p_i^{\mu_i}}{\mu_i!}. 
\end{align*}
\end{definition}

\begin{remark} Some observations regarding this definition:
\begin{itemize}
\item the part of the potential corresponding to a fixed genus $g$ is  polynomial;
\item the genus variable $u$ is omitted, since for any monomial, the genus for the intersection number of the corresponding coefficient  is easily recovered from the weight $w(\mu)$.
\end{itemize}
\end{remark}

The potentials $\calK$ and $\calS$ are very closely related.
\begin{lemma}\label{l:KSS}\begin{equation}\label{KSS}
\calK(p_0, p_1, \ldots) = \calS(1; 0, p_0, p_1, \ldots) =  \calS(p_0; 0, 0, p_1, \ldots).
\end{equation}
\end{lemma}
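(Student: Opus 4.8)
The plan is to unwind the definitions on both sides and match coefficients, using the two structural lemmas already established for the $\omega$-potential. First I would recall from Lemma \ref{expo} that $\calS^g = e^{(2g-2)s_1}\tilde\calS^g(s_2, s_3, \ldots)$, so that the total $\omega$-potential becomes $\calS(u; s_0, s_1, \ldots) = \sum_{g\geq 1} (ue^{s_1})^{2g-2}\tilde\calS^g(s_2, s_3, \ldots)$, with no dependence on $s_0$. This already explains the two right-hand expressions in \eqref{KSS}: substituting $(u; s_0, s_1, s_2, \ldots) = (1; 0, p_0, p_1, \ldots)$ gives $\sum_{g\geq 1}(e^{p_0})^{2g-2}\tilde\calS^g(p_1, p_2, \ldots)$, while substituting $(p_0; 0, 0, p_1, \ldots)$ gives $\sum_{g\geq 1}p_0^{2g-2}\tilde\calS^g(p_1,p_2,\ldots)$, and these agree since $(e^{p_0})^{2g-2} = (ue^{s_1})^{2g-2}|_{u=1, s_1=p_0}$ matches $(p_0)^{2g-2}|$ only formally — more precisely both equal the image of the single function $\sum_{g}v^{2g-2}\tilde\calS^g$ under $v = e^{p_0}$ versus $v = p_0$, which are distinct; so I would instead phrase the middle and right terms as two legitimate specializations that each compute $\calK$, and note the equality between them is a consequence, not the content.

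The substantive claim is $\calK(p_0, p_1, \ldots) = \calS(1; 0, p_0, p_1, \ldots)$. For the genus $g\geq 2$ part, I would compare coefficients of a monomial $\prod p_i^{\mu_i}/\mu_i!$. On the $\calK$ side this coefficient is $\int_{\Moduli_g}\prod\kappa_i^{\mu_i}$ with $w(\mu) = 3g-3$. On the $\calS$ side, after the specialization $s_0 = 0$, $s_1 = p_0$, $s_{i+1} = p_i$ for $i\geq 1$, I need the coefficient of the same monomial in $\calS^g = \langle e^{\vec\sigma}\rangle_g$; unwinding the Witten bracket, this is a sum of $\omega$-intersection numbers $\frac{1}{n!}\langle\sigma_0^{n_0}\cdots\sigma_m^{n_m}\rangle_{g,n}$ reindexed so that $n_{i+1}$ copies of $\sigma_{i+1}$ contribute $\prod\omega^{i+1}$ — and here is where Lemma \ref{oka} (with $g\geq 2$) enters: $F_*(\omega^K) = \kappa_{K - \vec 1}$ for $F:\Moduli_{g,n}\to\Moduli_g$ the total forgetful morphism, so $\int_{\Moduli_{g,n}}\prod\omega_i^{k_i} = \int_{\Moduli_g}\prod\kappa_{k_i - 1}$ whenever the degree is top. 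Matching the exponent bookkeeping — a factor $\omega_i^{j}$ pushes to $\kappa_{j-1}$, and the variable $s_j$ of $\calS$ corresponds to $p_{j-1}$ of $\calK$ — shows the two monomial coefficients agree. For the genus $0$ and $1$ contributions: $\calS$ is defined to start at $g=1$, and in genus $1$ every $\omega$ monomial of positive degree vanishes (as noted in the proof of Lemma \ref{odil}, $\omega_i = \lambda_1$ and $\lambda_1^2 = 0$), so the genus-$1$ part of $\calS$ contributes only through $\tilde\calS^1$, a constant; tracking that constant against the $\frac{p_0}{24}$ term of $\calK$ — which records $\int_{\Moduli_{1,1}}\psi_1 = 1/24$ via the $\kappa_0$ relation on $\Moduli_1$ — closes the bottom-degree case.

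The main obstacle I anticipate is the careful indexing of the change of variables: the $\omega$-potential is organized by the exponents appearing on $\omega$ classes, but pushing forward via $F$ lowers each exponent by one (Lemma \ref{oka}), so a monomial $\prod\sigma_{j}^{n_j}$ in $\calS$ must be matched with a monomial $\prod\kappa_{j-1}^{n_j}$ in $\calK$, which accounts precisely for the index shift $p_{n-1} = s_n$ and the specialization $s_0 = 0$ (the variable $s_0$ would track $\omega^0 = 1$ insertions, which by Lemma \ref{ostring} push forward to zero, hence contribute nothing). A secondary bookkeeping point is the compatibility of the symmetric-group normalization: the factors $\frac{1}{\mu_i!}$ in $\calK$ versus the $\frac{1}{n!}$ and the multinomial counting implicit in $\langle\sigma_0^{n_0}\cdots\sigma_m^{n_m}\rangle$ must be reconciled, exactly as in the standard passage between the Gromov-Witten bracket notation and the exponential generating function; I would handle this by the same combinatorial identity used implicitly for $\calF$. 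Once the dictionary $\big(s_0, s_1, s_2, s_3, \ldots\big)\mapsto\big(0, p_0, p_1, p_2, \ldots\big)$ is fixed and Lemma \ref{oka} is invoked degree by degree, the equality is a formal consequence.
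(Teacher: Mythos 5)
Your treatment of the first equality is essentially the paper's own argument: Lemma \ref{ostring} makes $\calS$ constant in $s_0$, Lemma \ref{oka} converts $\int_{\Moduli_{g,n}}\omega^K$ into $\int_{\Moduli_g}\kappa_{K-\vec{1}}$, and the shift $s_{i+1}=p_i$ matches monomial coefficients (the factorial bookkeeping is the standard exponential-generating-function one). The genuine gap is in the second equality. You correctly computed from Lemma \ref{expo} that $\calS(1;0,p_0,p_1,\ldots)=\sum_{g}(e^{p_0})^{2g-2}\tilde{\calS}^g(p_1,\ldots)$ while $\calS(p_0;0,0,p_1,\ldots)=\sum_{g}p_0^{2g-2}\tilde{\calS}^g(p_1,\ldots)$, and that these are different series --- but your proposed resolution, to ``phrase the middle and right terms as two legitimate specializations that each compute $\calK$,'' contradicts that very observation: two distinct series cannot both equal $\calK$, and the right-hand one as printed does not (its genus-$g$ part is a monomial $p_0^{2g-2}$ times a polynomial in $p_1,p_2,\ldots$, whereas the genus-$g$ part of $\calK$ is $e^{(2g-2)p_0}$ times such a polynomial, since each factor $\kappa_0$ contributes $2g-2$). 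What the second equality requires is the substitution $u=e^{p_0}$, not $u=p_0$: because $u$ and $s_1$ enter $\calS$ only through the combination $ue^{s_1}$ (this is exactly the content of Lemma \ref{expo}), one may trade $(u,s_1)=(1,p_0)$ for $(e^{p_0},0)$, and indeed this is how the paper itself uses the lemma in the proof of Theorem \ref{thm:mz}, where it writes $\calS(e^{p_0};0,0,p_1,\ldots)$. Having spotted the discrepancy, the correct move is to fix the evident typo in \eqref{KSS} and prove the corrected statement, not to declare both specializations valid.

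A secondary but real error is your genus-one bookkeeping. The genus-one part of $\calS$ is not ``a constant'': since $\omega_i=\lambda_1$ and $\lambda_1^2=0$, the only nonvanishing genus-one $\omega$-number is $\int_{\Moduli_{1,1}}\omega_1=1/24$, so $\calS^1=s_1/24$, which is linear in $s_1$ (this is also the one place where Lemma \ref{expo} must be read with the paper's unstable convention for genus one). Under $s_1=p_0$ this linear term is precisely what produces the $p_0/24$ term of $\calK$ in the first equality; a constant could never account for a term linear in $p_0$, so ``tracking that constant against $p_0/24$'' does not close the bottom case. With these two points repaired, your $g\geq 2$ coefficient comparison coincides with the paper's one-line proof via Lemmas \ref{oka} and \ref{expo}.
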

\begin{proof}
Lemma \ref{oka} implies that $\calK$ is obtained from $\calS$ by setting $u=1$ and shifting variables so that $s_{i+1} = p_{i}$. Then the structure statement of Lemma \ref{expo} implies the second equality in \eqref{KSS}.
\end{proof}
The equalities \eqref{KSS} motivate the introduction of the ``unstable" term  $p_0/24$, which corresponds to assigning value $1/24$ to $\kappa_0$ on $\Moduli_{1}$ (which is not a Deligne-Mumford stack).

The combinatorial formulae of Corollary \ref{num} can be rephrased as the existence of a differential operator which acts on the Gromov-Witten potential to produce the $\omega$-potential.

\begin{theorem}\label{diffop}
Define the {\bf fork} operator as:
\begin{equation}\label{fork}
\mathcal{L} := \sum_{n=1}^\infty  \frac{(-1)^{n-1}}{n!}\sum_{i_1,\dots,i_n = 0}^\infty s_{i_1}\cdots s_{i_n} \vardel_{t_{i_1 + \cdots +i_n +1-n}} .
\end{equation}
Denote by $\tilde{\calF} = \sum_{g=1}^\infty u^{2g-2}\calF_g$ the positive genus part of the Gromov-Witten potential. Then:
\begin{equation} \label{expop}
(e^\calL \tilde{\calF})_{|\mathbf{t}=0} = \calS
\end{equation}
\end{theorem}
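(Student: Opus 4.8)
The plan is to show that the operator $e^{\calL}$ implements, at the level of generating functions, the combinatorial sum over partitions in Corollary \ref{num}. The key observation is that $\calL$ is an exponential-type differential operator: each summand of $\calL$ takes a product of $n$ of the $\sigma$-variables $s_{i_1}, \ldots, s_{i_n}$ and replaces it by a single $t$-derivative in the slot $i_1 + \cdots + i_n + 1 - n$, with the alternating sign $(-1)^{n-1}$ and the symmetry factor $1/n!$. Comparing with \eqref{omegafor2}, an individual $\psi$-integral $\int_{\Moduli_{g,\ell}} \prod \psi_{\bullet_i}^{\alpha_i - |P_i| + 1}$ is exactly a coefficient of $\tilde{\calF}$, the index shift $\alpha_i - |P_i| + 1$ matches $i_1 + \cdots + i_{|P_i|} + 1 - |P_i|$ when the $\omega$-insertions grouped into the block $P_i$ carry exponents $i_1, \ldots, i_{|P_i|}$, and the sign $(-1)^{n + \ell(\calP)} = \prod_j (-1)^{|P_j| - 1}$ is precisely the product of the block signs $(-1)^{|P_i|-1}$. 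So the passage from $\psi$- to $\omega$-monomials is the contraction of marks into partition blocks, which is exactly what applying the vector field $\calL$ repeatedly (and exponentiating) accomplishes.

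First I would make this precise by a bookkeeping lemma: for a vector field of the form $\calL = \sum_{n \geq 1} \frac{c_n}{n!} \sum_{i_1, \ldots, i_n} s_{i_1} \cdots s_{i_n} \vardel_{t_{f(i_1, \ldots, i_n)}}$ with $f$ symmetric, the coefficient extraction $(e^{\calL} \tilde{\calF})_{|\mathbf t = 0}$ can be expanded as a sum over ways to partition a multiset of $\omega$-insertions into blocks, each block contributing a factor $c_{|P|}$ and changing its collective insertion datum via $f$; the combinatorial factors work out because the $1/n!$ inside $\calL$ together with the $1/m!$ from $e^{\calL} = \sum \calL^m/m!$ and the $1/N!$ from $\tilde{\calF} = \langle e^{\vec\sigma} \rangle$ reassemble into the statement that one simply sums over \emph{unordered set partitions} $\calP \vdash [n]$. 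Here $c_n = (-1)^{n-1}$ and $f(i_1, \ldots, i_n) = i_1 + \cdots + i_n + 1 - n$.

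Then I would compare term by term. Fix $g \geq 1$ and a monomial $\prod s_i^{n_i}$ in $\calS$; its coefficient is $\frac{1}{\prod n_i!}\langle \prod \sigma_i^{n_i}\rangle_g = \frac{1}{\prod n_i!}\int_{\Moduli_{g,n}}\prod \omega^K$ where $n = \sum n_i$ and the exponent vector $K$ records the $\omega$-insertions. Corollary \ref{num} (or really Proposition \ref{omega} before integration, but the integrated form suffices since we want the potential) expresses this as a sum over $\calP \vdash [n]$ of $(-1)^{n + \ell(\calP)}$ times a $\psi$-integral on $\Moduli_{g, \ell(\calP)}$ with block exponents $\alpha_j - |P_j| + 1$. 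Each such $\psi$-integral is, up to the symmetry factor, a coefficient of $\tilde{\calF}$. Matching this against the expansion of $(e^{\calL}\tilde{\calF})_{|\mathbf t = 0}$ from the previous step, the partition-block structure, the signs, and the index shifts all line up; one checks the genus-$u$ powers match (both sides live in $u^{2g-2}$, and $\calL$ commutes with $u$), and the symmetry/multinomial factors cancel because $\tilde{\calF}$ and $\calS$ are both exponential generating functions. One caveat to handle carefully: Corollary \ref{num} as stated is for top-degree monomials on $\Moduli_g$ (the unpointed space), so to get the full $\omega$-potential on $\Moduli_{g,n}$ one invokes Proposition \ref{omega}, which holds in all degrees; the dimension constraint is automatically enforced on both sides by which monomials have nonzero coefficients.

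The main obstacle I expect is the combinatorial bookkeeping in the lemma of the second paragraph: verifying that the three sources of factorials ($1/n!$ per application of $\calL$, $1/m!$ from exponentiating, $1/N!$ from the definition of the potential as $\langle e^{\vec\sigma}\rangle$) combine to produce exactly a sum over \emph{unordered} partitions of the insertion set with no spurious overcounting. The cleanest way to organize this is to first prove, by induction on $m$, a formula for $\calL^m \tilde{\calF}$ as a sum over sequences of $m$ disjoint blocks carved out of the insertion multiset, then symmetrize to pass from ordered sequences of blocks to set partitions, absorbing the $1/m!$; then set $\mathbf t = 0$, which kills all terms except those where \emph{every} insertion has been consumed into a block (since any surviving $t$-variable is set to zero), leaving precisely the sum over $\calP \vdash [n]$. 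Everything else — the sign tracking and the arithmetic of the index shift $f$ — is routine once this skeleton is in place.
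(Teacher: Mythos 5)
Your proposal follows essentially the same route as the paper's proof: fix a monomial in the $s$-variables, expand $(e^{\calL}\tilde{\calF})_{|\mathbf{t}=0}$ as a sum over unordered set partitions of the $\omega$-insertions, and match signs, index shifts, and factorial factors against formula \eqref{omegafor2}, which is exactly how the paper organizes its bookkeeping (via the data $\underline{J}$ and the multiplicity $\prod k_i!/((\prod j_{i,l}!)^{v_i}\prod v_i!)$). One small remark: the detour through Proposition \ref{omega} is unnecessary, since Corollary \ref{num} already contains the top-degree integral $\int_{\Moduli_{g,n}}\prod\omega_i^{k_i}$, which is all the potential $\calS$ records.
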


\begin{remark}
Before we embark in a formal proof of Theorem \ref{diffop}, we provide some intuition. Consider the summand in the generating function $\calS$: $(\int_{\Moduli_{2,5}}\omega_1^2\omega_2^2\omega_3^2 \omega_4\omega_5) u^2\frac{s_1^2}{2}\frac{s_2^3}{3!}$; consider the operator $s_1s_2 \partial_{t_2}$ (up to sign, a term of $\calL$). Let us assume that there is a  function $\calG$ such that $\calS = s_1s_2 \partial_{t_2} \calG$. What we want to understand is what kind of information we get about $\calG$ by equating the coefficients of the monomial  $\frac{s_1^2}{2}\frac{s_2^3}{3!}$ in the above differential equation. A simple calculus exercise shows that if $\calG = \ldots +\gamma u^2t_2s_1 \frac{s_2^2}{2}+\ldots$, then
\begin{equation}\label{grab}
\int_{\Moduli_{2,5}}\omega_1^2\omega_2^2\omega_3^2 \omega_4\omega_5 =6\gamma.
\end{equation}
Let us now give a diagrammatic representation of this relation. Associate a graph to a monomial in $s,t$ variables as in Figure \ref{fig-gra}. Let $\Gamma$ denote the graph associated to  the monomial $(\int_{\Moduli_{2,5}}\omega_1^2\omega_2^2\omega_3^2 \omega_4\omega_5) u^2\frac{s_1^2}{2}\frac{s_2^3}{3!}$. Then \eqref{grab} is represented in Figure \ref{fig-grab}: applying the operator $s_1s_2 \partial_{t_2}$ compares $\Gamma$ with  the  sum of all graphs obtained by grabbing a leg of $\Gamma$ decorated by $\omega_i^1$, a leg decorated by $\omega_j^2$ and replacing them with a leg decorated with a $\psi^2$.
In general, an operator of the form $s^I\partial_{t_k}$ compares the coefficient of $\calS$ corresponding to a graph $\Gamma$, to coefficients of $\calG$ obtained by grabbing in all possible ways a collection of $\omega$-legs  of $\Gamma$ decorated according to the multi-index $I$, and replacing them by a unique leg decorated by $\psi^k$. The operator $\calL$ runs over all possibilities of grabbing a some  $\omega$-legs and replacing them with a unique leg; this leg is decorated with a power of $\psi$ determined by the number and the powers of $\omega$ classes that have been grabbed. Applying the exponential of the operator $\calL$ then corresponds to grabbing multiple  groups of $\omega$ legs and replacing each group with a $\psi$ leg, not caring about the order of the grabbing. At this point it should be possible to recognize, when applying $e^\calL$ to the Gromov-Witten potential $\tilde\calF$, the equations \eqref{omegafor2}.  The operator $\calL$ is constructed precisely to reproduce the formula based on the summation over all partitions of the index set in \eqref{omegafor2}.


\end{remark}

 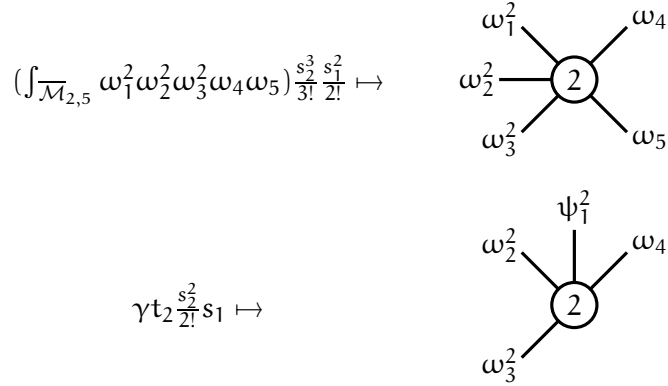
\begin{figure}[tb]
\begin{tikzpicture}

\node at (-5,0) {$(\int_{\Moduli_{2,5}}\omega_1^2\omega_2^2\omega_3^2 \omega_4\omega_5) \frac{s_2^3}{3!}\frac{s_1^2}{2!}\mapsto$};
\draw[very thick]  (0,0) circle (0.30);
\node at (0,0) {$2$};
\node at (1,0.8) {$\omega_4$};
\node at (1,-0.8) {$\omega_5$};
\node at (-1,0.8) {$\omega_1^2$};
\node at (-1,-0.8) {$\omega_3^2$};
\node at (-1.3,0) {$\omega_2^2$};
\draw[very thick]  (-1,0) -- (-0.30,0);
\draw[very thick]  (-0.7,0.7) -- (-0.20,0.2);
\draw[very thick]  (-0.7,-0.7) -- (-0.20,-0.2);
\draw[very thick]  (0.7,0.7) -- (0.20,0.2);
\draw[very thick]  (0.7,-0.7) -- (0.20,-0.2);
\begin{scope}[shift ={(0,-3)}]
\node at (-5,0) {$\gamma  t_2\frac{s_2^2}{2!}s_1\mapsto$};
\draw[very thick]  (0,0) circle (0.30);
\node at (0,0) {$2$};
\node at (1,0.8) {$\omega_4$};
\node at (-1,0.8) {$\omega_2^2$};
\node at (-1,-0.8) {$\omega_3^2$};
\node at (0,1.3) {$\psi_1^2$};
\draw[very thick]  (0,1) -- (0, 0.30);
\draw[very thick]  (-0.7,0.7) -- (-0.20,0.2);
\draw[very thick]  (-0.7,-0.7) -- (-0.20,-0.2);
\draw[very thick]  (0.7,0.7) -- (0.20,0.2);
\end{scope}

\end{tikzpicture}
\caption{Decorated graphs are associated to monomials in $s$ and $t$ variables. Such graphs also keep track of the coefficients of the monomial being an intersection number of $\psi$ and $\omega$ classes.}
\label{fig-gra}
\end{figure} 

 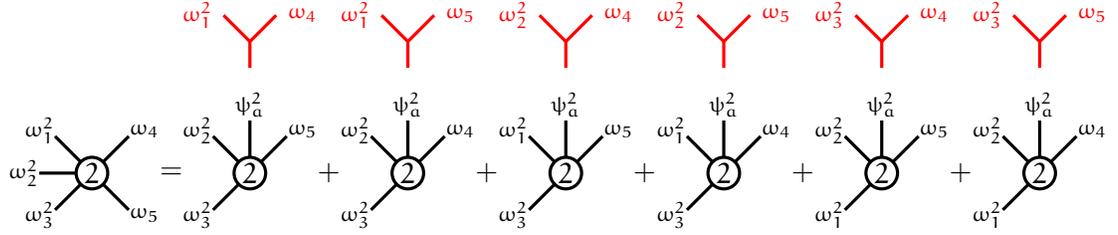
\begin{figure}[tb]
\begin{tikzpicture}

\begin{scope}[scale= 0.7]
\draw[very thick]  (0,0) circle (0.30);
\node at (0,0) {$2$};
\node at (1.5,0) {$=$};
\node at (1,0.8) {\scriptsize$\omega_4$};
\node at (1,-0.8) {\scriptsize$\omega_5$};
\node at (-1,0.8) {\scriptsize$\omega_1^2$};
\node at (-1,-0.8) {\scriptsize$\omega_3^2$};
\node at (-1.3,0) {\scriptsize$\omega_2^2$};
\draw[very thick]  (-1,0) -- (-0.30,0);
\draw[very thick]  (-0.7,0.7) -- (-0.20,0.2);
\draw[very thick]  (-0.7,-0.7) -- (-0.20,-0.2);
\draw[very thick]  (0.7,0.7) -- (0.20,0.2);
\draw[very thick]  (0.7,-0.7) -- (0.20,-0.2);

\begin{scope}[shift ={(3,0)}]
\draw[very thick]  (0,0) circle (0.30);
\node at (0,0) {$2$};
\node at (1,0.8) {\scriptsize$\omega_5$};
\node at (-1,0.8) {\scriptsize$\omega_2^2$};
\node at (-1,-0.8) {\scriptsize$\omega_3^2$};
\node at (0,1.3) {\scriptsize $\psi_a^2$};
\draw[very thick]  (0,1) -- (0, 0.30);
\draw[very thick]  (-0.7,0.7) -- (-0.20,0.2);
\draw[very thick]  (-0.7,-0.7) -- (-0.20,-0.2);
\draw[very thick]  (0.7,0.7) -- (0.20,0.2);
\draw[very thick, color = red]  (0,2) -- (0, 2.5) -- (0.5,3) ;
\draw[very thick, color = red]   (0, 2.5) -- (-0.5,3) ;
\node[color = red] at (-1,3) {\scriptsize $\omega_1^2$};
\node[color = red] at (1,3) {\scriptsize $\omega_4$};
\node at (1.5,0) {$+$};
\end{scope}

\begin{scope}[shift ={(6,0)}]
\draw[very thick]  (0,0) circle (0.30);
\node at (0,0) {$2$};
\node at (1,0.8) {\scriptsize$\omega_4$};
\node at (-1,0.8) {\scriptsize$\omega_2^2$};
\node at (-1,-0.8) {\scriptsize$\omega_3^2$};
\node at (0,1.3) {\scriptsize $\psi_a^2$};
\draw[very thick]  (0,1) -- (0, 0.30);
\draw[very thick]  (-0.7,0.7) -- (-0.20,0.2);
\draw[very thick]  (-0.7,-0.7) -- (-0.20,-0.2);
\draw[very thick]  (0.7,0.7) -- (0.20,0.2);
\draw[very thick, color = red]  (0,2) -- (0, 2.5) -- (0.5,3) ;
\draw[very thick, color = red]   (0, 2.5) -- (-0.5,3) ;
\node[color = red] at (-1,3) {\scriptsize $\omega_1^2$};
\node[color = red] at (1,3) {\scriptsize $\omega_5$};
\node at (1.5,0) {$+$};
\end{scope}


\begin{scope}[shift ={(9,0)}]
\draw[very thick]  (0,0) circle (0.30);
\node at (0,0) {$2$};
\node at (1,0.8) {\scriptsize$\omega_5$};
\node at (-1,0.8) {\scriptsize$\omega_1^2$};
\node at (-1,-0.8) {\scriptsize$\omega_3^2$};
\node at (0,1.3) {\scriptsize $\psi_a^2$};
\draw[very thick]  (0,1) -- (0, 0.30);
\draw[very thick]  (-0.7,0.7) -- (-0.20,0.2);
\draw[very thick]  (-0.7,-0.7) -- (-0.20,-0.2);
\draw[very thick]  (0.7,0.7) -- (0.20,0.2);
\draw[very thick, color = red]  (0,2) -- (0, 2.5) -- (0.5,3) ;
\draw[very thick, color = red]   (0, 2.5) -- (-0.5,3) ;
\node[color = red] at (-1,3) {\scriptsize $\omega_2^2$};
\node[color = red] at (1,3) {\scriptsize $\omega_4$};
\node at (1.5,0) {$+$};
\end{scope}


\begin{scope}[shift ={(12,0)}]
\draw[very thick]  (0,0) circle (0.30);
\node at (0,0) {$2$};
\node at (1,0.8) {\scriptsize$\omega_4$};
\node at (-1,0.8) {\scriptsize$\omega_1^2$};
\node at (-1,-0.8) {\scriptsize$\omega_3^2$};
\node at (0,1.3) {\scriptsize $\psi_a^2$};
\draw[very thick]  (0,1) -- (0, 0.30);
\draw[very thick]  (-0.7,0.7) -- (-0.20,0.2);
\draw[very thick]  (-0.7,-0.7) -- (-0.20,-0.2);
\draw[very thick]  (0.7,0.7) -- (0.20,0.2);
\draw[very thick, color = red]  (0,2) -- (0, 2.5) -- (0.5,3) ;
\draw[very thick, color = red]   (0, 2.5) -- (-0.5,3) ;
\node[color = red] at (-1,3) {\scriptsize $\omega_2^2$};
\node[color = red] at (1,3) {\scriptsize $\omega_5$};
\node at (1.5,0) {$+$};
\end{scope}

\begin{scope}[shift ={(15,0)}]
\draw[very thick]  (0,0) circle (0.30);
\node at (0,0) {$2$};
\node at (1,0.8) {\scriptsize$\omega_5$};
\node at (-1,0.8) {\scriptsize$\omega_2^2$};
\node at (-1,-0.8) {\scriptsize$\omega_1^2$};
\node at (0,1.3) {\scriptsize $\psi_a^2$};
\draw[very thick]  (0,1) -- (0, 0.30);
\draw[very thick]  (-0.7,0.7) -- (-0.20,0.2);
\draw[very thick]  (-0.7,-0.7) -- (-0.20,-0.2);
\draw[very thick]  (0.7,0.7) -- (0.20,0.2);
\draw[very thick, color = red]  (0,2) -- (0, 2.5) -- (0.5,3) ;
\draw[very thick, color = red]   (0, 2.5) -- (-0.5,3) ;
\node[color = red] at (-1,3) {\scriptsize $\omega_3^2$};
\node[color = red] at (1,3) {\scriptsize $\omega_4$};
\node at (1.5,0) {$+$};
\end{scope}


\begin{scope}[shift ={(18,0)}]
\draw[very thick]  (0,0) circle (0.30);
\node at (0,0) {$2$};
\node at (1,0.8) {\scriptsize$\omega_4$};
\node at (-1,0.8) {\scriptsize$\omega_2^2$};
\node at (-1,-0.8) {\scriptsize$\omega_1^2$};
\node at (0,1.3) {\scriptsize $\psi_a^2$};
\draw[very thick]  (0,1) -- (0, 0.30);
\draw[very thick]  (-0.7,0.7) -- (-0.20,0.2);
\draw[very thick]  (-0.7,-0.7) -- (-0.20,-0.2);
\draw[very thick]  (0.7,0.7) -- (0.20,0.2);
\draw[very thick, color = red]  (0,2) -- (0, 2.5) -- (0.5,3) ;
\draw[very thick, color = red]   (0, 2.5) -- (-0.5,3) ;
\node[color = red] at (-1,3) {\scriptsize $\omega_3^2$};
\node[color = red] at (1,3) {\scriptsize $\omega_5$};

\end{scope}

\end{scope}
\end{tikzpicture}
\caption{The comparison of one of the coefficients of $\calS$ and $\calG$ given by the differential equation $\calS = s_1s_2 \partial_{t_2} \calG$.}
\label{fig-grab}
\end{figure} 

\begin{proof}
A formal proof of Theorem \ref{diffop} is an exercise in bookkeeping: for any given monomial, we show that the coefficients on either side of \eqref{expop} agree, by using formula \eqref{omegafor2}.

Fix a monomial $u^{2g-2}\frac{s_1^{k_1}}{k_1!}\ldots \frac{s_n^{k_n}}{k_n!}$; we let  $K = 1^{k_1}, 2^{k_2}, \ldots, m^{k_m}$ denote a multi-index where the first $k_1$ indices have value $1$ , and so on. We do not care about the order of the values, so all multi-indices arising in this proof can be assumed normalized so that the values are non-decreasing. The coefficient for this monomial on the right hand side of \eqref{expop} is
$\int_{\Moduli_{g,n}}\omega^K$, where we use multi-index notation in the natural way. Formula \eqref{omegafor2} gives an expression for this quantity in terms of a weighted sum over all partitions of the index set, so let us fix a partition $\calP = P_1, \ldots , P_r$ of the index set. Each part $P_i$ of the partition $\calP$ produces a multi-index by looking at the powers of $\omega$ classes supported on the points that belong to $P_i$. It is possible that different parts give rise to the same multi-index. We denote $\underline{J} = J_1^{v_1}, \ldots J_t^{v_t}$ the collection of the multi-indices arising from the parts of $\calP$, intending that the multi-index $J_1$ arises $v_1$ times, and so on. For  $i$ from $1$ to $t$, we denote  $J_i = 1^{j_{i,1}}, \ldots, m^{j_{i,m}}$.
Finally, from $\underline{J}$, we can produce a multi-index $\alpha = \alpha_1^{v_1}, \ldots,\alpha_t^{v_t}$, where $\alpha_i = 1 +  j_{i,2}+2j_{i,3}+\ldots+(m-1) j_{i,m}$ (note that this is one plus the sum of the values minus the number of the parts of $J_i$, as in the definition of the exponents of the $\psi$ classes in \eqref{omegafor2} ). The multi-index $\alpha$ is the exponent vector of the monomial in $\psi$ classes corresponding to the partition $\calP$ in \eqref{omegafor2}. It is possible that different partitions of the set of indices give rise to the same multi-index $\alpha$: in fact there are exactly $\prod k_i!/((\prod j_{i,l}!)^{v_i}\prod v_i!)$ distinct partitions of the indices that will produce $\alpha$. We can rewrite \eqref{omegafor2} as a summation over the combinatorial data given by $\underline{J}$:
\begin{equation}\label{whew}
\int_{\Moduli_{g,n}}\omega^K = \sum_{\underline{J}} (-1)^{n+\ell(\alpha)} \frac{\prod k_i!}{(\prod j_{i,l}!)^{v_i}\prod v_i!}\int_{\Moduli_{g,\ell(\alpha)}}\psi^\alpha
\end{equation}
We exhibit a summand $m_\alpha$ in $\tilde\calF$, and a term $L_{\underline{J}}$ in the differential operator $e^\calL$ such that 
$L_{\underline{J}}m_\alpha$ is a multiple of $u^{2g-2}\frac{s^{K}}{K!}$;  such multiple may be shown to be $\prod k_i!/((\prod j_{i,l}!)^{v_i}\prod v_i!)$.
Define:
\begin{align}
m_\alpha & =  \left(\int_{\Moduli_{g,\ell(\alpha)}}\psi^\alpha\right) u^{2g-2} \frac{t_{\alpha_1}^{v_1}}{v_1!}\ldots \frac{t_{\alpha_t}^{v_t}}{v_t!}\\
L_{\underline{J}} & =\frac{ (-1)^{n+\ell(\alpha)} }{\prod v_i!} \prod \left( \frac{s_1^{j_{i,1}}}{j_{i,1}!}\ldots \frac{s_m^{j_{i,m}}}{j_{i,m}!}\partial_{t_{\alpha_i}}\right)^{v_i}
\end{align}
One may show that all pairs of terms  $m\in \tilde\calF$, $L\in e^\calL$ such that $Lm$ is a multiple  of $u^{2g-2}\frac{s^{K}}{K!}$ arise in this fashion. It follows that the coefficient of $u^{2g-2}\frac{s^{K}}{K!}$ in $e^\calL \tilde\calF$ equals the right hand side of \eqref{whew}, and therefore it agrees with the coefficient of the same monomial in  $\calS$. This concludes the proof of Theorem \ref{diffop}.
\end{proof}

\section{Change of Variables}
\label{sec:cov}


Theorem \ref{diffop} states that $\calS$ is the restriction of a function obtained by applying the exponential\footnote{it is important to note that the coefficients of the vector field are functions in variables that commute with the $\vardel_{t_i}$'s.} of a vector field to the function $\tilde\calF$. It follows that the two generating functions are related by a change of variables. In this section, we derive explicitly this change of variables, and then deduce an equivalent statement for the potential for $ \kappa$ classes $\calK$.

\begin{lemma} \label{expflow}
With notation as above, and denoting $\DD\mathcal{L} = \sum_{i = 0}f_i(\bm{s})\vardel_{t_i}$,
\begin{align}
e^\mathcal{L}\mathcal{F}(\bm{t}) &= \mathcal{F}(\bm{t} + \bm{f}(\bm{s})). 
\end{align}
\end{lemma}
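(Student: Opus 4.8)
The plan is to recognize $\mathcal{L} = \sum_{i\geq 0} f_i(\bm s)\,\vardel_{t_i}$ as a vector field in the $\bm t$-directions whose coefficients are ``constants'', namely formal power series in the variables $\bm s$, which therefore commute with every $\vardel_{t_j}$. For such a translation-invariant, constant-coefficient derivation the time-one flow $e^{\mathcal L}$ is literally translation of the argument by $\bm f(\bm s)$, and the lemma is the infinite-variable, formal-power-series incarnation of that fact. The first step is to fix the formal framework: work in (an appropriate completion of) $\mathbb{Q}[u^{\pm 1}][[\bm s]][[\bm t]]$, and observe that each $f_i(\bm s)$ has vanishing constant term — indeed the lowest-order contribution to $f_i$ from \eqref{fork} is the linear term $s_i$, coming from $n=1$. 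Hence the assignment $t_i \mapsto t_i + f_i(\bm s)$ extends to a well-defined continuous ring endomorphism, whose value on $\mathcal F$ is by definition $\mathcal F(\bm t + \bm f(\bm s))$; in particular the right-hand side of the claimed identity is an honest element of the ring, with the coefficient of any fixed monomial a finite sum because $\bm f$ strictly raises $\bm s$-degree.

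The second step is to check that the left-hand side also makes sense, using the same $\bm s$-adic grading: each term $f_i\vardel_{t_i}$ of $\mathcal L$ raises the $\bm s$-degree of a monomial by at least $1$ (while lowering its $\bm t$-degree by exactly $1$), so $\mathcal L^k$ maps into $(\bm s)^k$, and therefore $e^{\mathcal L} = \sum_{k\geq 0}\mathcal L^k/k!$ converges $\bm s$-adically. The same filtration will be the tool for justifying all the rearrangements that follow: modulo $(\bm s)^N$ only finitely many summands of $\mathcal L$, finitely many powers $\mathcal L^k$, and finitely many monomials of $\mathcal F$ ever contribute to a given coefficient.

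The third step is the actual computation, which I would reduce to monomials: by linearity and $\bm s$-adic continuity it suffices to prove $e^{\mathcal L}\bm t^{\beta} = (\bm t + \bm f)^{\beta}$ for each monomial $\bm t^{\beta}$ (a finite product of $t_i$'s). Since the $\vardel_{t_i}$ pairwise commute and commute with the $f_j$, the multinomial theorem gives $\mathcal L^{k} = \sum_{|\gamma| = k}\binom{k}{\gamma}\,\bm f^{\gamma}\,\vardel^{\gamma}$ over multi-indices $\gamma$, whence $e^{\mathcal L}\bm t^{\beta} = \sum_{\gamma}\frac{1}{\gamma!}\bm f^{\gamma}\,\vardel^{\gamma}\bm t^{\beta}$, a \emph{finite} sum because $\vardel^{\gamma}\bm t^{\beta} = 0$ unless $\gamma \leq \beta$. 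This is exactly the (terminating, multivariate) Taylor expansion of $\bm t^{\beta}$ about $\bm t$ evaluated at $\bm t + \bm f$, i.e. $(\bm t + \bm f)^{\beta}$ — equivalently, just the binomial theorem applied one coordinate at a time. Reassembling $\mathcal F$ from its monomials, and invoking the filtration argument of the previous step to legitimize the interchange of summations, yields $e^{\mathcal L}\mathcal F(\bm t) = \mathcal F(\bm t + \bm f(\bm s))$.

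I expect the only genuine obstacle to be this last bit of formal hygiene — making precise that the triple infinite summation (over $i$ inside $\mathcal L$, over $k$ in the exponential, over the monomials of $\mathcal F$) can be reorganized freely. Everything else is the elementary identity ``$e^{a\,\vardel_x}$ shifts $x$ by $a$''; the content is purely in setting up the grading so that all of these sums are manifestly finite after truncation, and I would present that filtration-by-$\bm s$-degree argument once, up front, and then refer back to it.
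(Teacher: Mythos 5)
Your proof is correct and takes essentially the same route as the paper's: both rest on the elementary fact that $e^{a\,\vardel_x}$ translates $x$ by $a$ whenever the coefficient $a$ commutes with $\vardel_x$, verified by a (terminating) Taylor/multinomial expansion. The only difference is cosmetic — the paper translates one variable at a time via $e^{\mathcal{L}}=\prod_i e^{f_i(\bm{s})\vardel_{t_i}}$ and leaves the formal convergence implicit, whereas you work monomial-by-monomial and justify the rearrangements with the $\bm{s}$-adic filtration, a welcome but inessential refinement.
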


\begin{proof}
This is just the exponential flow in differential geometry.  For the benefit of readers who may not be familiar with this technique, we provide the sketch of an elementary proof.
In one variable it is a Taylor expansion argument: imagine that $\calL = f(s)\vardel_t$ and $\tilde\calF$ is a function of one variable $t$, then
\begin{align*}
e^\mathcal{L}\mathcal{F}(t) &= \left(1 + f(s)\vardel_{t} + \frac{1}{2}f^2(s)\vardel^2_{t}+\cdots\right)\mathcal{F}(t) \\
&= \sum_{n=0}^\infty \frac{\mathcal{F}^{(n)}(t)}{n!}\left(f(s)\right)^n \\
&= \mathcal{F}(t+f(s)) \text{ expanded at } t.
\end{align*}
In countably many variables one has:
\begin{align*}
e^\mathcal{L}\mathcal{F}(\bm{t}) &= e^{\sum f_i(\bm{s})\vardel_{t_i}}\mathcal{F}(\bm{t}) \\
&=\prod e^{f_i(\bm{s})\vardel_{t_i}} \mathcal{F}(\bm{t})\\
&=  \mathcal{F}(\bm{t} + \bm{f}(\bm{s})), 
\end{align*}
where the variables have been translated one at a time.
\end{proof}

\begin{corollary}
The potential $\calS$ is obtained from $\tilde\calF$ via the change of variables encoded in the following generating function:
\begin{equation}\label{covgf}
\sum_{i=0}^\infty t_i z^i = \left[z\left(1- e^{\sum_{k=0}^\infty -s_k z^{k-1}}\right)\right]_+,
\end{equation}
where the subscript $+$ denotes the truncation of the expression in parenthesis to terms with non-negative exponents for the variable $z$.
\end{corollary}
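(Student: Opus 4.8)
The plan is to extract the change of variables directly from the vector field $\mathcal{L} = \sum_{n\geq 1} f_n(\bm{s})\vardel_{t_n}$ by identifying the coefficient functions $f_i(\bm{s})$ and then recognizing their generating series. By Lemma \ref{expflow}, $\calS = (e^\calL \tilde\calF)_{|\bm{t}=0} = \tilde\calF(\bm{f}(\bm{s}))$, so the substitution is precisely $t_i \mapsto f_i(\bm{s})$, and it suffices to compute $\sum_i f_i(\bm{s}) z^i$ in closed form. First I would read off from \eqref{fork} that
\begin{equation*}
f_i(\bm{s}) = \sum_{n=1}^\infty \frac{(-1)^{n-1}}{n!}\sum_{\substack{i_1,\dots,i_n\geq 0 \\ i_1+\cdots+i_n + 1 - n = i}} s_{i_1}\cdots s_{i_n}
\end{equation*}
for $i\geq 0$, with $f_i = 0$ when the index constraint forces $i<0$ to appear (i.e. $\mathcal{L}$ has no $\vardel_{t_i}$ with the shift pushing below $0$).

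Next I would pass to generating functions. The key observation is that the inner sum is an $n$-fold convolution: introducing the series $A(z) := \sum_{k\geq 0} s_k z^{k-1}$, one has $\sum_{i_1,\dots,i_n}s_{i_1}\cdots s_{i_n} z^{\,i_1+\cdots+i_n - n} = A(z)^n$, and the extra $+1$ in the subscript $t_{i_1+\cdots+i_n+1-n}$ accounts for one overall factor of $z$. Therefore, summing over $n$,
\begin{equation*}
\sum_{i} f_i(\bm{s})\, z^i \;=\; z\sum_{n=1}^\infty \frac{(-1)^{n-1}}{n!} A(z)^n \;=\; z\left(1 - e^{-A(z)}\right) \;=\; z\left(1 - e^{\sum_{k\geq 0} -s_k z^{k-1}}\right),
\end{equation*}
which is the bracketed expression in \eqref{covgf}. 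The final point is bookkeeping about which powers of $z$ actually occur: since each $f_i$ with $i\geq 0$ is a genuine coefficient function of $\mathcal{L}$ and terms with negative shift are simply absent, on the left-hand side only non-negative powers $z^i$ appear; hence one must apply the truncation $[\,\cdot\,]_+$ to the right-hand side to discard the (possibly nonzero) negative-degree tail produced by the $z^{k-1}$ exponents when $k=0$.

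I would then assemble these observations into the proof: state the formula for $f_i$, carry out the convolution identity to get $z(1-e^{-A(z)})$, invoke Lemma \ref{expflow} to conclude that $\calS = \tilde\calF$ under $t_i \mapsto f_i(\bm{s})$, and note the $[\,\cdot\,]_+$ truncation. The only subtle point — and the one I would be most careful about — is the treatment of the $s_0$ variable: the term $s_0 z^{-1}$ in $A(z)$ produces negative powers of $z$, so the right-hand side of \eqref{covgf} a priori lives in $z^{-1}\mathbb{Q}[s_0][[z,\bm{s}]]$ and the truncation is not cosmetic. One should check that this is consistent with Lemma \ref{ostring} (that $\calS$ is independent of $s_0$, equivalently the operator $\mathcal{L}$ interacts with $s_0$ only through the discarded terms), so that setting $s_0 = 0$ — as is done when passing to $\calK$ via Lemma \ref{l:KSS} — loses no information. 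This reconciliation of the formal-variable bookkeeping with the truncation convention is the main thing to get right; the convolution computation itself is routine.
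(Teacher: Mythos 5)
Your proposal is correct and takes essentially the same route as the paper: both identify the substitution $t_i = f_i(\bm{s})$ by combining Theorem \ref{diffop} with Lemma \ref{expflow} and then resum the coefficients of the fork operator $\calL$ into the generating function \eqref{covgf}. The only difference is that you carry out explicitly, via the convolution identity with $A(z)=\sum_{k\geq 0}s_kz^{k-1}$, what the paper (after listing $f_0,\ldots,f_3$) leaves as ``a simple combinatorial exercise,'' and your treatment of the truncation $[\,\cdot\,]_+$ and of the $s_0z^{-1}$ terms matches the paper's conventions.
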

\begin{proof}
Combining Theorem \ref{diffop} and Lemma \ref{expflow}, we obtain that $\calS(u;\mathbf{s} ) = \tilde\calF(u;f_0(\mathbf{s}), f_1(\mathbf{s}), \ldots)$, where $f_i$ is the coefficient of $\vardel_{t_i}$ in the vector field $\calL$. Resumming the coefficients to express $\calL$ in the form $\sum_{i = 0}f_i(\bm{s})\vardel_{t_i}$  one obtains:
\begin{align*}
t_0 = f_0(\mathbf{s}) &= s_0 -s_0s_1 +\frac{s_0^2}{2!}s_2 +s_0\frac{s_1^2}{2!} -\frac{s_0^3}{3!}s_3 - \frac{s_0^2}{2!}s_1 s_2 - s_0\frac{s_1^3}{3!} + \ldots \\
t_1 = f_1(\mathbf{s}) &= s_1  -{s_0}s_2 - \frac{s_1^2}{2!} +\frac{s_0^2}{2!}s_3 +{s_0}s_1 s_2 + \frac{s_1^3}{3!} - \frac{s_0^3}{3!}s_4- \ldots \\
t_2 = f_2(\mathbf{s}) &= s_2 - {s_0}s_3 -s_1 s_2  + \frac{s_0^2}{2!}s_4+ s_0 s_1s_3 + s_0\frac{s_2^2}{2!}+\frac{s_1^2}{2!}s_2-  \ldots \\
t_3 = f_3(\mathbf{s}) &= s_3   - {s_0}s_4-  s_1s_3 - \frac{s_2^2}{2!} + \frac{s_0^2}{2!}s_5 + s_0s_1s_4+ s_0 s_2s_3+ \frac{s_1^2}{2!}s_3+s_1 \frac{s_2^2}{2!}-  \ldots \\
\end{align*}
It is a simple combinatorial exercise to see that the above change of variables is organized in generating function form as in \eqref{covgf}.
\end{proof}

We have at this point all the technical information necessary to give our proof of \cite[Thm 4.1]{mz}. Before doing so, we establish one final piece of notation.

\begin{definition}\label{def:sch}
For $i\geq 0$ and a countable set of variables $\mathbf{p} = p_1, p_2, \ldots$, we define the functions $S_i(\mathbf{p})$ via:
\begin{equation}
\sum_{i=0}^\infty S_i(\mathbf{p}) z^i = e^{\sum_{k=1}^\infty {p_k}z^k}
\end{equation}
\end{definition}

\begin{remark}
The functions $S_i$ appear in  some literature (especially in the school of integrable systems) as {\it elementary Schur polynomials}. The reason is that  by interpreting the variables $p_k$ as (normalized) power sums,
$$
p_k = \frac{\sum x_j}{j},
$$ 
then $S_i$ is the {\it complete symmetric polynomial of degree $k$} in the variables $x_j$, which also is the Schur polynomial associated to the one part partition $k$. 
\end{remark}

\begin{theorem} \label{thm:mz}
The generating functions $\calK(p_0,\mathbf{p})$ and $\tilde\calF(u;t_0, t_1, \ldots)$ agree when restricting the domain of $\tilde\calF$ to the subspace $t_0= t_1=0$ and applying the following transformation to the remaining variables:
\begin{align*}
u &= e^{p_0},\\
t_i & = -S_{i-1}(-\mathbf{p}).
\end{align*}
\end{theorem}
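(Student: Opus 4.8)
The strategy is to take the change of variables already established in the corollary preceding Definition~\ref{def:sch}, namely the generating-function identity \eqref{covgf} relating $\bm{t}$ and $\bm{s}$, and then feed it through the two elementary specializations that connect $\calS$ to $\calK$: Lemma~\ref{l:KSS} tells us $\calK(p_0,\mathbf{p}) = \calS(p_0; 0,0,p_1,p_2,\ldots)$, i.e.\ we set $u = p_0$ (wait---$\calS$ carries its genus variable, so the substitution is $u \mapsto p_0$ in the slot of the formal genus parameter), $s_0 = s_1 = 0$, and $s_{k+1} = p_k$ for $k\geq 1$. Simultaneously we restrict $\tilde\calF$ to $t_0 = t_1 = 0$, which is consistent because Theorem~\ref{diffop} only ever produces $\calS$ from the positive-genus part $\tilde\calF$ and, as noted in the introduction, this restriction is exactly the hyperplane $t_0=0$ (together with $t_1=0$, which after the dilaton-type normalization is where the $\omega$-dilaton structure of Lemma~\ref{expo} forces us). So concretely: first substitute $s_0=0$, $s_{k+1}=p_k$ into the right-hand side of \eqref{covgf}, and check that the resulting expressions for $t_0$ and $t_1$ vanish identically, confirming the restriction is the correct one; then read off $t_i$ for $i\geq 2$ as a function of $\mathbf{p}$ and match it to $-S_{i-1}(-\mathbf{p})$.

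\textbf{Carrying out the substitution.} In \eqref{covgf} the inner exponent is $\sum_{k\geq 0} -s_k z^{k-1}$. Setting $s_0 = 0$ kills the $z^{-1}$ term; setting $s_1 = 0$ kills the $z^{0}$ term; and $s_{k+1} = p_k$ for $k\geq 1$ turns the tail $\sum_{k\geq 2} -s_k z^{k-1}$ into $\sum_{j\geq 1} -p_j z^{j}$. Thus the exponent becomes $\sum_{j\geq 1}(-p_j)z^j = \sum_{j\geq 1} p_j'\, z^j$ with $p_j' = -p_j$, so by Definition~\ref{def:sch},
\begin{equation*}
e^{\sum_{k\geq 0}-s_k z^{k-1}} \;=\; \sum_{i\geq 0} S_i(-\mathbf{p})\, z^{i}.
\end{equation*}
Plugging into \eqref{covgf},
\begin{equation*}
\sum_{i\geq 0} t_i z^i \;=\; \Bigl[\, z\Bigl(1 - \sum_{i\geq 0} S_i(-\mathbf{p})\,z^{i}\Bigr)\Bigr]_+ \;=\; \Bigl[\, z\bigl(1 - S_0(-\mathbf{p})\bigr) - \sum_{i\geq 1} S_i(-\mathbf{p})\, z^{i+1}\Bigr]_+ .
\end{equation*}
Since $S_0(-\mathbf{p}) = 1$, the $z^1$ term cancels and there is no negative-power term, so the truncation $[\cdot]_+$ does nothing and we get $\sum_{i\geq 0} t_i z^i = -\sum_{i\geq 1} S_i(-\mathbf{p})\, z^{i+1} = -\sum_{j\geq 2} S_{j-1}(-\mathbf{p})\, z^{j}$. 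Comparing coefficients: $t_0 = t_1 = 0$ and $t_j = -S_{j-1}(-\mathbf{p})$ for $j\geq 2$, which is exactly the claimed transformation (and for $j\geq 2$ it is also valid to write it for all $i\geq 0$ since $S_{-1} = 0$ and $S_0 = 1$ would give $t_0 = 0$, $t_1 = -1$---so one must be a little careful and state it only for $i\geq 2$, or else note $t_0,t_1$ are handled by the restriction).

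\textbf{Assembling the equality of potentials.} Finally I would combine: (i) the corollary's conclusion $\calS(u;\mathbf{s}) = \tilde\calF(u; f_0(\mathbf{s}), f_1(\mathbf{s}),\ldots)$ with the $f_i$ packaged by \eqref{covgf}; (ii) Lemma~\ref{l:KSS}, giving $\calK(p_0,\mathbf{p}) = \calS(p_0; 0,0,p_1,p_2,\ldots)$, which by Lemma~\ref{expo} is where the genus variable $u$ of $\calS$ gets specialized to $p_0$---except that in the statement of Theorem~\ref{thm:mz} the substitution is phrased as $u = e^{p_0}$, reflecting the ``dilaton shift and cosmetic change of variables'' mentioned around \eqref{incov}; so the honest bookkeeping step is to track how the $e^{(2g-2)s_1}$ prefactor of Lemma~\ref{expo}, combined with the shift $s_1 \mapsto$ (the old $s_2$) $\mapsto p_1$ and the reindexing, converts the naive $u\mapsto p_0$ into $u \mapsto e^{p_0}$ after absorbing the $e^{(2g-2)s_1}$ into the genus-weighting $u^{2g-2}$. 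Then substituting the computed $t_i = -S_{i-1}(-\mathbf{p})$ (and $t_0=t_1=0$) into $\tilde\calF$ yields $\calK(p_0,\mathbf{p})$.

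\textbf{Main obstacle.} The genuine mathematical content---the partition formula \eqref{omegafor2}---is already encapsulated in Theorem~\ref{diffop} and its corollary, so the only real work here is the normalization bookkeeping: making precise the ``dilaton shift and cosmetic change of variables'' so that the genus parameter substitution comes out as $u = e^{p_0}$ rather than $u = p_0$, and verifying that the hyperplane $\{t_0 = t_1 = 0\}$ is exactly the image of $\{s_0 = s_1 = 0\}$ under \eqref{covgf}. This is where I expect to have to be careful about factors of $2g-2$, the role of the $p_0/24$ unstable term in Definition~\ref{def:kapot}, and the direction of the reindexing $s_{k+1} = p_k$; the Schur-polynomial identification itself, as sketched above, is immediate once the substitution $s_0 = s_1 = 0$, $s_{k+1} = p_k$ is plugged into \eqref{covgf}.
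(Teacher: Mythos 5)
Your route is essentially the paper's, just with the two specializations performed in the opposite order. The paper keeps $u=1$ and substitutes $s_0=0$, $s_{k+1}=p_k$ for \emph{all} $k\geq 0$ (so $s_1=p_0$, not $0$), which yields \eqref{regcov} with $t_1=1-e^{-p_0}$ and $t_i=-e^{-p_0}S_{i-1}(-\mathbf{p})$, and only at the very end trades the $s_1$-slot for the genus variable via the second equality of \eqref{KSS}; you instead pass to the form $\calS(\,\cdot\,;0,0,p_1,\ldots)$ first, so the $e^{-p_0}$ prefactors never appear and $t_0=t_1=0$, $t_i=-S_{i-1}(-\mathbf{p})$ drops out directly from \eqref{covgf}. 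Your Schur-polynomial computation is correct (including the observation that the truncation $[\cdot]_+$ becomes vacuous once $s_0=0$), and your signs match the theorem statement; the unsigned $S_i$'s in the last display of the paper's proof are a typo.

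The one step you leave open, getting $u=e^{p_0}$ rather than $u=p_0$, should be closed, and your diagnosis of it is slightly off: it has nothing to do with the dilaton shift around \eqref{incov} (that shift only enters in Section \ref{sec:chain}); it is an immediate consequence of Lemma \ref{expo}. Since $\calS^g=e^{(2g-2)s_1}\tilde\calS^g(s_2,\ldots)$, one has
\[
\calS(1;0,p_0,p_1,\ldots)\;=\;\sum_{g}e^{(2g-2)p_0}\,\tilde\calS^g(p_1,p_2,\ldots)\;=\;\calS(e^{p_0};0,0,p_1,\ldots),
\]
so the second member of \eqref{KSS} as printed, $\calS(p_0;0,0,p_1,\ldots)$, is itself a typo for $\calS(e^{p_0};0,0,p_1,\ldots)$, which is exactly the form the paper's own proof of Theorem \ref{thm:mz} uses; with that corrected equality as your starting point, your substitution $s_0=s_1=0$, $s_{k+1}=p_k$ ($k\geq 1$), $u=e^{p_0}$ finishes the argument with no further bookkeeping. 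One caveat your instinct about the unstable term correctly anticipates: in the $t_1=0$, $u=e^{p_0}$ form the genus-one contribution is invisible (at $g=1$ the factor $u^{2g-2}$ is $1$), which is precisely why the term $p_0/24$ is inserted by hand in Definition \ref{def:kapot}; this affects your formulation and the paper's final expression in exactly the same way, so it is a convention to flag rather than a defect of your argument.
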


\begin{proof}
By the $\omega$-string relation (Lemma \ref{ostring}), the function $\calS$ is constant in $s_0$, therefore, one  may restrict the domain to the hyperplane $s_0 =0$. Imposing this restriction on the transformation given in \eqref{covgf}, one deduces that the domain of the function $\tilde\calF$ is restricted to the hyperplane $t_0=0$. After reindexing, one  obtains: 

\begin{align}\label{covgfreg}
\sum_{i=0}^\infty t_{i+1} z^i &= 1- e^{\sum_{k=0}^\infty -s_{k+1} z^{k}}.
\end{align}
For $k \geq 0$, define $s_{k+1} = p_k$, to obtain the change of variables:
\begin{align}
t_1 &= 1- e^{-p_0} \nonumber \\
t_i &=  -e^{-p_0}S_{i-1}(-p_1,- p_2, \ldots), \mbox{for $i\geq 2$.} \label{regcov}
\end{align}

Combining the statement of Corollary \ref{covgf} with  Lemma \ref{l:KSS}, one obtains
\begin{align*}
\calK(p_0, p_1, \ldots)&= \calS(1;0, p_0, p_1, \ldots)  = \calS(e^{p_0};0, 0, p_1, \ldots)\\
&= \tilde\calF(1;0, 1- e^{-p_0},  -e^{-p_0}S_{1}(-p_1, -p_2, \ldots),  -e^{-p_0}S_{2}(-p_1, -p_2, \ldots), \ldots)\\
& = \tilde\calF(e^{p_0};0, 0, S_{1}(-p_1, -p_2, \ldots), S_{2}(-p_1, -p_2, \ldots), \ldots).
\end{align*}
\end{proof}

\section{Virasoro Relations for $\kappa$ Potential}
\label{sec:chain}


The goal of this Section is to obtain a countable number of differential equations that annihilate the potential $\calK$, and determine recursively all intersection numbers of $\kappa$ classes on $\Moduli_g$ from the  unstable term $\frac{1}{24}p_0$. We start from the Virasoro constrains annihilating and determining the Gromov-Witten potential $\calF$.

\begin{theorem}[Witten's conjecture, Kontsevich's theorem]
Consider the differential operators $L_n$ defined as follows:
\begin{align}
L_{-1} =& - \vardel_{t_0} +  \sum_{i=0}^\infty t_{i+1}{\vardel_{t_{i}}}+\frac{t_0^2}{2}, \label{eq:str}\\
L_{0} = & -\frac{3}{2} \vardel_{t_1}+   \sum_{i=0}^\infty \frac{(2i+1)}{2}t_{i}{\vardel_{t_{i}}}+\frac{1}{16}, \label{eq:dil}
\end{align}
and for all positive $n$,
\begin{align}
L_n = &-\left(\frac{(2n+3)!!}{2^{n+1}}\right){\vardel_{t_{n+1}}} + \sum_{i=0}^\infty \left(\frac{(2i+2n+1)!!}{(2i-1)!!2^{n+1}}\right)t_i{\vardel_{t_{i+n}}} + \nonumber \\ & \frac{u^2}{2}\sum_{i=0}^{n-1}\left(\frac{(2i+1)!!(2n-2i-1)!!}{2^{n+1}}\right){\vardel_{t_i} \vardel_{t_{n-1-i}}}. \label{eq:higher}
\end{align} 
For all $n\geq -1$, we have
\begin{equation}\label{vira}
L_n (e^{\calF}) = 0.
\end{equation}
Further, the equations  \eqref{vira} determine recursions among the coefficients of $\calF$ that recover uniquely the Gromov-Witten potential from the initial condition $\calF = \frac{t_0^3}{3!}+ higher \ order \ terms$.
\end{theorem}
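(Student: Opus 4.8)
This is the celebrated Witten--Kontsevich theorem, and my plan is \emph{not} to reprove the annihilation statement \eqref{vira} from scratch: that is a genuinely deep result, which I would cite, reserving actual work only for the ``Further'' clause, which becomes elementary bookkeeping once \eqref{vira} is granted. For orientation let me recall the landscape of \eqref{vira}. After the dilaton shift $t_i \mapsto t_i - \delta_{i,1}$, the partition function $Z = e^{\calF}$ is Witten's conjectural $\tau$-function of the KdV hierarchy \cite{wittenconj}; the operators $L_n$ close into the Virasoro bracket $[L_m,L_n] = (m-n)L_{m+n}$ for $m,n\geq -1$, and the system \eqref{vira} is equivalent to the KdV-plus-string statement. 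Thus it suffices to invoke one of the known proofs of Witten's conjecture: Kontsevich's matrix Airy integral \cite{k:wittconj} (the ribbon-graph/Strebel cell decomposition of $\Moduli_{g,n}$ realizes the generating function as a Hermitian one-matrix model whose loop equations are exactly the $L_n$), or the later proofs of Okounkov and Pandharipande via the ELSV formula and Hurwitz combinatorics, of Mirzakhani via Weil--Petersson volumes, or of Kazarian and Lando. Since the present paper only consumes \eqref{vira} as an input, any one of these citations closes this half.

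For the ``Further'' clause I would argue directly from \eqref{vira}. Using $\vardel_{t_j} e^{\calF} = (\vardel_{t_j}\calF)\,e^{\calF}$ and $\vardel_{t_i}\vardel_{t_j} e^{\calF} = (\vardel_{t_i}\vardel_{t_j}\calF + \vardel_{t_i}\calF\,\vardel_{t_j}\calF)\,e^{\calF}$, each equation $L_n(e^{\calF}) = 0$ becomes, after dividing by $e^{\calF}$, a polynomial identity in the partial derivatives of $\calF$; extracting the coefficient of a suitably chosen monomial $u^{2g-2}\prod_i t_{k_i}$ turns it into a linear relation whose distinguished term, coming from the first summand of $L_n$ (the multiple of $\vardel_{t_{n+1}}\calF$ for $n\geq 1$, and the analogous $\vardel_{t_0}$, $\vardel_{t_1}$ terms in \eqref{eq:str}, \eqref{eq:dil}), isolates a single correlator $\langle\tau_a\,\prod\tau_{b_i}\rangle_g$ carrying the ``largest'' insertion, while every other term is a \emph{product} of correlators that are strictly smaller in a suitable complexity order. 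Concretely the string equation $L_{-1}$ strips a $\tau_0$ insertion and the dilaton equation $L_0$ strips a $\tau_1$ insertion (up to a scalar of the form $2g-2+(\text{number of marks})$), each strictly decreasing the number of marked points; and for $n\geq 1$ the operator $L_n$ rewrites any correlator carrying an insertion of $\tau_{n+1}$ in terms of correlators of strictly lower complexity (lower genus, or fewer points, or the same points with a strictly smaller top descendant index), the quadratic term of $L_n$ being the place where the genus drops or the correlator factors.

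The uniqueness statement then follows by induction: choose a well-founded complexity measure on correlators --- for instance, order genus-$g$ terms by the grading $3g-3+\#\{\text{marks}\}$ and break ties by genus and by the multiset of descendant indices --- with base data $\langle\tau_0^3\rangle_0 = 1$ together with a finite low-genus check; every other correlator is reduced in one step by the appropriate $L_n$ (or by $L_{-1}$, $L_0$ if all of its insertions are $\tau_0$ or $\tau_1$), and the induction terminates. The one point that requires genuine care --- and which I expect to be the main obstacle in writing this clause cleanly --- is verifying that the chosen order strictly decreases at \emph{every} reduction: in particular one must check that the string and dilaton equations, which do not lower descendant degree, are invoked only to remove $\tau_0$ or $\tau_1$ insertions (so that they strictly decrease the number of marks), and that the quadratic terms of the $L_n$ truly land at strictly lower complexity. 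This is a finite but fussy combinatorial verification rather than a deep point; modulo it, \eqref{vira} together with the single value $\langle\tau_0^3\rangle_0 = 1$ determines $\calF$.
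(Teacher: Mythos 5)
The paper does not prove this theorem at all: it is imported as a known result (Witten's conjecture / Kontsevich's theorem, with references), so your plan of citing the existing proofs for the annihilation statement \eqref{vira} is exactly the paper's own treatment, and your sketch of the reconstruction clause is the standard, correct induction (linear terms of $L_n$ drop the number of insertions at fixed genus, quadratic terms drop genus or split the correlator, and string/dilaton handle the residual $\tau_0,\tau_1$ cases down to $\langle\tau_0^3\rangle_0=1$). No gap worth flagging beyond the bookkeeping you already identify.
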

Equation \eqref{eq:str} is equivalent to the string recursion (Lemma \ref{str}), equation \eqref{eq:dil} to  dilaton (Lemma \ref{dil}). For positive values of $n$, there is not a geometric interpretation for the recursions on the coefficients of $\calF$ given by \eqref{eq:higher}. 

Theorem \ref{thm:mz} gives an explicit change of variables that equates the potential $\calK$ with the restriction of $\tilde\calF$ to the linear subspace $t_0=0, u=1$. Essentially, since $\calF$ is annihilated by the operators $L_n$, we  produce operators annihilating $\calK$ by rewriting the $L_n$'s in the variables $p_i$. There are a couple subtle points to address:
\begin{enumerate}
\item The function $\calF = \tilde\calF +\frac{\calF_0}{u^2}$, where $\calF_0$ is a multiple of $t_0^3$. Therefore:
$$\left(L_n e^{ \tilde\calF}\right)_{|t_0 = 0} = \left(L_ne^{ \calF} \right)_{|t_0 = 0},
$$ 
since for any $n$, $L_n$ has at most a quadratic term in $\vardel_{t_0}$.
\item The operations of applying the differential operator $L_n$ and restricting to the hyperplane $t_0=0$ do not commute. In Section \ref{remove}, we use the string equation $L_{-1}$ to replace the operator $\vardel_{t_0}$ with a differential operator in the remaining variables.
\end{enumerate}
The computation is long and technical; we break it down into several subsections to give the reader the chance to isolate segments of it that may be of interest.

\subsection{Auxiliary variables and notation}
In this section we make some elementary changes of variables and introduce some notation, in order to simplify the computation.
Let:
\begin{equation}
 \hat{t}_0 = -(t_1 -1), \ \ \ \ \ \hat{t}_i = -t_{i+1} \ \ \mbox{for $i=-1$  and $i\geq 1$,}  \ \ \ \ \ \hat{p}_k = -p_k \ \ \mbox{for all $k\geq 0$.}
\end{equation}

The translation of the variable $t_1$ by $1$ is the  well-known {\bf dilaton shift} in Gromov-Witten theory, which has the effect of making the operators $L_n$ homogeneous quadratic. The remaining reindexing and signs are chosen  to simplify the change of variables \eqref{regcov} to:
\begin{equation}\label{schurcov}
\sum_{i=0}^\infty \hat{t}_{i} z^i = e^{\sum_{k=0}^\infty \hat{p}_{k} z^{k}} ,
\end{equation}
or equivalently
\begin{equation}
\hat{t}_{i}= e^{\hat{p}_{0}}S_i(\hat{p}_1, \hat{p}_2, \ldots).
\end{equation}
In the hatted variables, the operators $L_n$ become:
\begin{align}
L_{-1} =&  \sum_{i=-1}^\infty \hat{t}_{i+1}{\vardel_{\hat{t}_{i}}}+\frac{\hat{t}_{-1}^2}{2}, \\
L_{0} = &   \sum_{i=-1}^\infty \frac{(2i+3)}{2}\hat{t}_{i}{\vardel_{\hat{t}_{i}}}+\frac{1}{16}, \\
L_n = & \sum_{i=-1}^\infty \left(\frac{(2i+2n+3)!!}{(2i+1)!!2^{n+1}}\right)\hat{t}_i{\vardel_{\hat{t}_{i+n}}} + \nonumber \\ & \frac{u^2}{2}\sum_{i=-1}^{n-2}\left(\frac{(2i+3)!!(2n-2i-3)!!}{2^{n+1}}\right){\vardel_{\hat{t}_i} \vardel_{\hat{t}_{n-3-i}}}.
\end{align}

\subsection{Removing $\hat{t}_{-1} = -t_0$}
\label{remove}
In this section we derive operators $\hat{L}_n$, for all $n\geq 0$ that annihilate the restriction  ${e^{\calF}}_{|\hat{t}_{-1} = 0}$. 
From the operator $L_{-1}$ we have:
\begin{equation}
\vardel_{\hat{t}_{-1}}e^{\calF} =  -\left(\sum_{i=0}^\infty \frac{\hat{t}_{i+1}}{\hat{t}_{0}} {\vardel_{\hat{t}_{i}}}+\frac{\hat{t}_{-1}^2}{2\hat{t}_{0}}\right)e^{\calF}. \label{stringaway}
\end{equation}
We may write the operators $L_n$ making sure that any monomial containing $\vardel_{\hat{t}_{-1}}$ has it as the righmost term. Then replacing $\vardel_{\hat{t}_{-1}}$ with the right hand side of \eqref{stringaway} produces an operator $\tilde{L}_n$ still annhiliating $e^{\calF}$ and not containing any derivative in $\hat{t}_{-1}$. The application of $\tilde{L}_n$ commutes with restriction to the hyperplane $\hat{t}_{-1}=0$, so we may define $\hat{L}_n = \left(\tilde{L}_n\right)_{|\hat{t}_{-1}=0}$. Up to some tedious but straightforward computation we have proved the following.
\begin{lemma} \label{lem:ann}
For all $n\geq 0$, the operators $\hat{L}_n$ defined below annhiliate $(e^\calF)_{|\hat{t}_{-1}=0}$:
\begin{align}
\hat{L}_{0} = &   \sum_{i=0}^\infty \frac{(2i+3)}{2}\hat{t}_{i}{\vardel_{\hat{t}_{i}}}+\frac{1}{16}, \\
\hat{L}_{1} = &   \sum_{i=0}^\infty \frac{(2i+3)(2i+5)}{4}\hat{t}_{i}{\vardel_{\hat{t}_{i+1}}}+ \nonumber\\
&+ \frac{u^2}{8\hat{t}_0^2}\left( \sum_{i=0}^\infty \left(\hat{t}_{i+2}-\frac{\hat{t}_{1}\hat{t}_{i+1}}{\hat{t}_{0}}\right)\vardel_{\hat{t}_{i}}+\sum_{i,j=0}^\infty \hat{t}_{i+1}\hat{t}_{j+1}\vardel_{\hat{t}_{i}}\vardel_{\hat{t}_{j}} \right),\\
\hat{L}_{2} = &   \sum_{i=0}^\infty \frac{(2i+3)(2i+5)(2i+7)}{8}\hat{t}_{i}{\vardel_{\hat{t}_{i+2}}}+ \nonumber\\
&+ \frac{3u^2}{8\hat{t}_0^2}\left( \sum_{i=0}^\infty \left(\hat{t}_{i+1}(1-{\hat{t}_{0}}\vardel_{\hat{t}_{0}})\right)\vardel_{\hat{t}_{i}} \right),\\
\hat{L}_n = & \sum_{i=0}^\infty \left(\frac{(2i+2n+3)!!}{(2i+1)!!2^{n+1}}\right)\hat{t}_i{\vardel_{\hat{t}_{i+n}}} + \nonumber \\ & \frac{u^2}{2}\sum_{i=0}^{n-3}\left(\frac{(2i+3)!!(2n-2i-3)!!}{2^{n+1}}\right){\vardel_{\hat{t}_i} \vardel_{\hat{t}_{n-3-i}}}+\nonumber \\ 
&-\frac{u^2}{\hat{t}_0}\frac{(2n-1)!!}{2^{n+1}}  \left( \vardel_{\hat{t}_{n-3}}+ \sum_{i=0}^\infty \hat{t}_{i+1}  \vardel_{\hat{t}_{i}}\vardel_{\hat{t}_{n-2}}\right).
\end{align}
\end{lemma}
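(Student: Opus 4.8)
The plan is to make explicit the elimination procedure sketched in the paragraph preceding the statement and then carry out the resulting bookkeeping. I would work throughout in the hatted variables and in the ring of formal power series, where $\hat t_0 = 1-t_1$ is invertible, so that all the rational expressions below make sense. Starting from $L_{-1}e^{\calF}=0$ and isolating the $i=-1$ summand $\hat t_0\vardel_{\hat t_{-1}}$ of $L_{-1}$ gives \eqref{stringaway}, which I read as the operator identity $\vardel_{\hat t_{-1}}e^{\calF} = \tilde R\, e^{\calF}$, where
\[
\tilde R := -\frac{1}{\hat t_0}\Bigl(D + \frac{\hat t_{-1}^2}{2}\Bigr), \qquad D := \sum_{j\geq 0}\hat t_{j+1}\vardel_{\hat t_j}.
\]
The structural facts I would record first are: $\tilde R$ involves no $\vardel_{\hat t_{-1}}$; its coefficients $1/\hat t_0$ and $\hat t_{j+1}$ are differentiated by $\vardel_{\hat t_k}$, $k\geq 0$, only when $k=0$ or $k=j+1$, so that $\vardel_{\hat t_k}$ with $k\geq 0$ never touches a coefficient $\hat t_{-1}^2$; the commutator $[\vardel_{\hat t_k},D]=\vardel_{\hat t_{k-1}}$ for $k\geq 1$ and $=0$ for $k=0$; and
\[
D^2 = \sum_{i\geq 0}\hat t_{i+2}\vardel_{\hat t_i} + \sum_{i,j\geq 0}\hat t_{i+1}\hat t_{j+1}\vardel_{\hat t_i}\vardel_{\hat t_j}, \qquad D\bigl(\tfrac1{\hat t_0}\bigr) = -\frac{\hat t_1}{\hat t_0^2}.
\]

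For each $n\geq 0$ I would locate $\hat t_{-1}$ and $\vardel_{\hat t_{-1}}$ in the hatted $L_n$. In the linear part $i+n$ is never $-1$, so the only occurrence is the $i=-1$ term, which carries an explicit factor $\hat t_{-1}$. In the quadratic part $\vardel_{\hat t_{-1}}$ occurs exactly at $i=-1$ and $i=n-2$, with equal coefficients $(2n-1)!!/2^{n+1}$: for $n\geq 3$ these indices are distinct, giving $\tfrac{u^2(2n-1)!!}{2^{n+1}}\vardel_{\hat t_{n-2}}\vardel_{\hat t_{-1}}$; for $n=2$ they give $\tfrac{3u^2}{8}\vardel_{\hat t_0}\vardel_{\hat t_{-1}}$; for $n=1$ they coincide, giving $\tfrac{u^2}{8}\vardel_{\hat t_{-1}}^2$; for $n=0$ there is nothing beyond the term $\tfrac12\hat t_{-1}\vardel_{\hat t_{-1}}$ with its $\hat t_{-1}$ factor. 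Moving every $\vardel_{\hat t_{-1}}$ to the right and replacing $\vardel_{\hat t_{-1}}e^{\calF}$ by $\tilde R e^{\calF}$ — and, for $n=1$, iterating once more via $\vardel_{\hat t_{-1}}(\tilde R e^{\calF}) = -\tfrac{\hat t_{-1}}{\hat t_0}e^{\calF} + \tilde R(\tilde R e^{\calF})$ — yields an operator $\tilde L_n$ that still annihilates $e^{\calF}$, contains no $\vardel_{\hat t_{-1}}$, and in which the relevant products have become compositions of differential operators. Since $\tilde L_n$ has no $\vardel_{\hat t_{-1}}$, applying it commutes with the restriction $\hat t_{-1}=0$, so $\hat L_n := (\tilde L_n)\big|_{\hat t_{-1}=0}$ annihilates $(e^{\calF})\big|_{\hat t_{-1}=0}$.

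The last step is to read off $\hat L_n$. The $i=-1$ linear term and every term of the composed operator carrying a coefficient $\hat t_{-1}^2$ (which, by the observation above, is never manufactured by differentiation) die on restriction; what survives is the $i\geq 0$ part of the linear sum, the $0\leq i\leq n-3$ part of the quadratic sum, and the composition of $\tfrac{u^2(2n-1)!!}{2^{n+1}}\vardel_{\hat t_{n-2}}$ (for $n\geq 3$) with $-\tfrac1{\hat t_0}D$. For $n\geq 3$, $\vardel_{\hat t_{n-2}}$ does not see $1/\hat t_0$, and $[\vardel_{\hat t_{n-2}},D]=\vardel_{\hat t_{n-3}}$ produces the third line of $\hat L_n$; for $n=2$, $\vardel_{\hat t_0}$ does differentiate $1/\hat t_0$ and the extra contribution collects into the factor $(1-\hat t_0\vardel_{\hat t_0})$; for $n=1$, one computes instead $(\tilde R\circ\tilde R)\big|_{\hat t_{-1}=0} = \bigl(-\tfrac1{\hat t_0}D\bigr)^2 = \tfrac1{\hat t_0^2}\bigl(D^2 - \tfrac{\hat t_1}{\hat t_0}D\bigr)$ via the two identities above; for $n=0$ the entire $\vardel_{\hat t_{-1}}$-block vanishes. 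Finally, expanding the ratio of double factorials $(2i+2n+3)!!/(2i+1)!!$ into a product of $n+1$ consecutive odd integers puts the surviving linear coefficients in the stated form for $n=0,1,2$. I expect the only real difficulty to be organizational: tracking which derivatives in the composed operators act on the coefficients $1/\hat t_0$ and $\hat t_{j+1}$ rather than passing through to $e^{\calF}$, and handling $n\in\{0,1,2\}$ separately because there $n-2\leq 0$ makes the general third line of $\hat L_n$ — which would otherwise feature an illegitimate $\vardel_{\hat t_{-1}}$ or $\vardel_{\hat t_{-2}}$ — degenerate, so it must be replaced by the special forms listed.
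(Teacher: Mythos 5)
Your proposal is correct and follows the same route as the paper: use the string equation \eqref{stringaway} to replace the rightmost $\vardel_{\hat{t}_{-1}}$ in each $L_n$, observe the resulting operator is free of $\vardel_{\hat{t}_{-1}}$ and hence commutes with restriction to $\hat{t}_{-1}=0$, and then restrict. The commutator bookkeeping you carry out (including the extra $-\hat{t}_{-1}/\hat{t}_0$ term for $n=1$ and the $\vardel_{\hat{t}_0}$ hitting $1/\hat{t}_0$ for $n=2$) is exactly the "tedious but straightforward computation" the paper leaves implicit, and your case results match the stated operators.
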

The need to treat the first three cases separately comes from the fact that the variable $\hat{t}_0$ and the remaining variables appear asymmetrically in the right hand side of \eqref{stringaway}.
We now rewrite these operators in the variables $\hat{p}_k$. We separate the computation in two parts, corresponding to the coefficients of $u^0$ and $u^2$ in the operators. We start by carrying out explicitly the change of variable for the first operator.

\subsection{Warm-up: $\hat{L}_0$}
\label{covdil}

We first focus on the term 
\begin{equation}\label{firstcase}
\sum_{i=0}^\infty \hat{t}_{i}{\vardel_{\hat{t}_{i}}} = \sum_{i=0}^\infty \hat{t}_{i}\sum_{j=0}^\infty \frac{\vardel{\hat{p}_{m}}}{\vardel{\hat{t}_{i}}}\vardel_{\hat{p}_{m}} .
\end{equation}
Consider the change of variables \eqref{schurcov}. By taking a partial derivative with respect to a variable $\hat{t}_i$, we obtain:
\begin{equation}\label{parder}
\sum_{j=0}^\infty \frac{\vardel \hat{p}_j}{\vardel \hat{t}_{i}} z^{j-i} = e^{-\sum_{k=0}^\infty \hat{p}_{k} z^{k}}. 
\end{equation}
We observe that \eqref{parder} holds for any fixed value of $i$, which implies that the partial derivatives $ \frac{\vardel \hat{p}_j}{\vardel \hat{t}_{i}}$ depend only on the difference $j-i$. In other words, the Jacobian of the change of variables may be thought as an infinite upper triangular matrix which is constant along translates of the diagonal.
It follows that the coefficient of $\vardel_{\hat{p}_{m}} $ in \eqref{firstcase} is the coefficient of $z^m$ in the product of generating series:
\begin{align*}\label{genser}
\sum_{i=0}^\infty \hat{t}_{i}{\vardel_{\hat{t}_{i}}} &=\sum_{m=0}^\infty\left[\left(\sum_{i=0}^\infty \hat{t}_{i}z^i\right)\left(\sum_{j=0}^\infty \frac{\vardel{\hat{p}_{j}}}{\vardel{\hat{t}_{i}}}z^{j-i}\right)\right]_{z^m}\vardel_{\hat{p}_m} \\
& = \sum_{m=0}^\infty \left[e^{\sum_{k=0}^\infty \hat{p}_{k} z^{k}}e^{-\sum_{k=0}^\infty \hat{p}_{k} z^{k}}\right]_{z^m}\vardel_{\hat{p}_m} = \vardel_{\hat{p}_0}.
\end{align*}

We return to the change of variables \eqref{schurcov} and apply the operator $z\vardel_z$ to obtain
\begin{equation}
\sum_{i=0}^\infty i \hat{t}_{i} z^i = e^{\sum_{k=0}^\infty \hat{p}_{k} z^{k}} \sum_{k=0}^\infty k\hat{p}_{k} z^{k}
\end{equation}
With an analogous argument to the previous case, we obtain:
\begin{align*}
\sum_{i=0}^\infty i \hat{t}_{i}{\vardel_{\hat{t}_{i}}} 
& = \sum_{m=0}^\infty \left[\left(e^{\sum_{k=0}^\infty \hat{p}_{k} z^{k}} \sum_{k=0}^\infty k\hat{p}_{k} z^{k}\right)(e^{-\sum_{k=0}^\infty \hat{p}_{k} z^{k}})\right]_{z^m}\vardel_{\hat{p}_m} \\
&=  \sum_{m=0}^\infty  m\hat{p}_{m} \vardel_{\hat{p}_m}.
\end{align*}
We may now express the operator $\hat{L}_0$ in the variables $p_i = -\hat{p}_i$ as 
\begin{equation}\label{dilop}
\hat{L}_{0} =    \sum_{i=0}^\infty \frac{(2i+3)}{2}\hat{t}_{i}{\vardel_{\hat{t}_{i}}}+\frac{1}{16} = -\frac{3}{2} \vardel_{{p}_0}+  \sum_{m=0}^\infty  mp_m\vardel_{{p}_m} +\frac{1}{16}.
\end{equation}
One may recognize that $\hat{L}_{0}(e^\calK) = 0$ (which implies  $\hat{L}_{0}(\calK) = 1/24$),  is equivalent to the statement that  $\kappa_0$ on $\Moduli_g$ has degree $2g-2$ (Lemma \ref{ka0}).

\subsection {Bell polynomials}

We now focus on the $u^0$ coefficients of the operators $\hat{L}_n$, with $n>0$. As a preliminary step, we change variables to expressions of the form:
\begin{equation}
\sum_{i=0}^\infty i^d \hat{t}_{i}z^i = (z \vardel_z)^d \left( \sum_{i=0}^\infty \hat{t}_{i} z^i \right).
\end{equation}
Iterated applications of the operator $z \vardel_z$ to the right hand side of \eqref{schurcov} are described by a variation of the classical {\it Fa\`{a} di Bruno formula} \cite{faa,arbogast}, giving the result in terms of Bell polynomials \cite{b:poly}.

\begin{definition}\label{def:bell}
For $d \geq 0$, the {\bf $d$-th Bell polynomial} $B_d(x_1, \ldots, x_d)$ is defined by
\begin{equation}\label{bell}
\sum_{d=0}^\infty B_d(x_1, \ldots, x_d) \frac{y^n}{n!} = e^{\left( \sum_{j=1}^\infty x_j \frac{y^j}{j!}\right)}.
\end{equation}
\end{definition} 
The first few Bell polynomials are $B_0 = 1, B_1 = x_1, B_2 = x_2+x_1^2, B_3 = x_3+ 3x_2x_1+x_1^3, B_4 = x_4+ 4x_3x_1+3x_2^2+6 x_2x_1^2+ x_1^4$. The coefficient of a  monomial $\prod x_j^{m_j}$ in the Bell polynomial $B_n$ (with $n = \sum jm_j$) counts the number of ways to partition a set with $n$ elements into a collection of  $m_1$ unlabeled subsets of cardinality $1$, $m_2$ unlabeled subsets of cardinality $2$, etc.
 
\begin{lemma} \label{bell}
For all $i \geq 0$, let  $${\hat{q}}_i = \sum_{k=0}^\infty k^i \hat{p}_kz^k.$$
Then,
\begin{equation}\label{varfaa}
(z\vardel_z)^d e^{{\hat{q}}_0} = e^{{\hat{q}}_0} B_d({\hat{q}}_1, {\hat{q}}_2, \ldots, {\hat{q}}_d). 
\end{equation}
\end{lemma}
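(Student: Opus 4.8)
The plan is to prove the identity
\begin{equation*}
(z\vardel_z)^d e^{\hat q_0} = e^{\hat q_0} B_d(\hat q_1, \hat q_2, \ldots, \hat q_d)
\end{equation*}
by induction on $d$, treating $z$ as the working variable and the $\hat p_k$ as constants, so that $\hat q_0 = \sum_k \hat p_k z^k$ is an honest power series in $z$ and $z\vardel_z$ acts on it term by term. The base case $d=0$ is just $e^{\hat q_0} = e^{\hat q_0}B_0$ with $B_0 = 1$. The key observation feeding the induction is that $z\vardel_z$ acts on $\hat q_i = \sum_k k^i \hat p_k z^k$ simply by bumping the index: $z\vardel_z \hat q_i = \hat q_{i+1}$, since differentiating $z^k$ and multiplying by $z$ pulls down a factor of $k$.

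For the inductive step, I would apply $z\vardel_z$ to $e^{\hat q_0}B_d(\hat q_1, \ldots, \hat q_d)$ using the product rule and the chain rule. The first term is $(z\vardel_z e^{\hat q_0}) B_d = e^{\hat q_0}\hat q_1 B_d$, using $z\vardel_z e^{\hat q_0} = (z\vardel_z \hat q_0) e^{\hat q_0} = \hat q_1 e^{\hat q_0}$. The second term is $e^{\hat q_0}\sum_{j=1}^d \frac{\vardel B_d}{\vardel x_j}(\hat q_1, \ldots, \hat q_d)\cdot (z\vardel_z \hat q_j) = e^{\hat q_0}\sum_{j=1}^d \frac{\vardel B_d}{\vardel x_j}\hat q_{j+1}$. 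So the whole thing equals $e^{\hat q_0}\left(\hat q_1 B_d + \sum_{j=1}^d \hat q_{j+1}\frac{\vardel B_d}{\vardel x_j}\right)$, and it remains to recognize the parenthesized expression as $B_{d+1}(\hat q_1, \ldots, \hat q_{d+1})$.

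That last recognition is the only real content, and it is a standard recursion for Bell polynomials: from the exponential generating function definition \eqref{bell}, differentiating $e^{\sum_j x_j y^j/j!}$ with respect to $y$ gives $\left(\sum_j x_j \frac{y^{j-1}}{(j-1)!}\right) e^{\sum_j x_j y^j/j!}$, and comparing coefficients of $y^d/d!$ on both sides yields exactly $B_{d+1}(x_1, \ldots, x_{d+1}) = \sum_{i=0}^d \binom{d}{i} B_{d-i}(x_1, \ldots) x_{i+1}$; a short manipulation rewrites this in the form $B_{d+1} = x_1 B_d + \sum_{j=1}^d x_{j+1}\frac{\vardel B_d}{\vardel x_j}$, which matches what we obtained. (Alternatively one can verify this operator identity directly: the operator $D = x_1 + \sum_{j\geq 1} x_{j+1}\vardel_{x_j}$ applied to the EGF $e^{\sum x_j y^j/j!}$ reproduces $\vardel_y$ of it after the shift $j \mapsto j-1$, so $D B_d = B_{d+1}$.) The main obstacle — such as it is — is purely bookkeeping: making sure the shift in indices and the combinatorial coefficients line up, and confirming that $B_{d+1}$ genuinely only involves $\hat q_1, \ldots, \hat q_{d+1}$, which follows since $\frac{\vardel B_d}{\vardel x_j}$ involves only $x_1,\ldots,x_d$ and the only newly introduced variable is $x_{d+1}$ coming from the $j=d$ term. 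Once the Bell-polynomial recursion is in hand the proof is immediate, so I would state that recursion as a preliminary remark (referencing \cite{faa,arbogast,b:poly}) and then give the two-line induction.
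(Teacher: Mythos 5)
Your proof is correct, but it takes a different route from the paper. You run a clean induction on $d$: the base case is trivial, the key facts are $z\vardel_z \hat{q}_i = \hat{q}_{i+1}$ and the recursion $B_{d+1}(x_1,\ldots,x_{d+1}) = x_1 B_d + \sum_{j=1}^{d} x_{j+1}\,\vardel_{x_j} B_d$, which you correctly extract from the exponential generating function (your operator identity $D\,B_d = B_{d+1}$ with $D = x_1 + \sum_{j\geq 1} x_{j+1}\vardel_{x_j}$ is the cleanest way to see it, since $D$ applied to $e^{\sum_j x_j y^j/j!}$ reproduces $\vardel_y$ of it), and your bookkeeping about which variables appear in $B_{d+1}$ is right. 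The paper instead treats the lemma as a direct adaptation of the classical Fa\`a di Bruno formula and sketches a bijective proof: it expands the iterated application of $z\vardel_z$ to $e^{\hat{q}_0}$ keeping all summands distinct, labels each application as a ``level,'' and matches the occurrences of a monomial $e^{\hat{q}_0}\hat{q}_{i_1}\cdots\hat{q}_{i_n}$ with set partitions of $[d]$ into blocks of sizes $i_1,\ldots,i_n$, which is exactly what the coefficients of $B_d$ count. Your argument is more self-contained and formally economical (no appeal to Fa\`a di Bruno, no combinatorial bijection, just the EGF recursion plus a two-line induction); the paper's argument buys combinatorial transparency, explaining directly why set-partition counts appear, at the cost of being a sketch rather than a complete verification.
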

\begin{proof}
This statement is a ready adaptation of the classical {Fa\`{a} di Bruno formula}, which expresses the successive derivatives of a composite function $f(g(z))$ in terms of derivatives of $f$ and of $g$. In our case the function $f$ is an exponential function, hence all of its derivatives are equal to itself, and since we are replacing the derivation operator with $z\vardel_z$ the ${\hat{q}}_i$'s play the role of the successive derivatives of $g$.
While it is at this point an exercise to complete the proof this way, we also provide a brief sketch of a bijective proof, which we find more conceptually satisfactory. Note that $z\vardel_z {\hat{q}}_i = {\hat{q}}_{i+1}$, and that applying $z\vardel_z$ to $e^{{\hat{q}}_0} {\hat{q}}_{i_1}\ldots {\hat{q}}_{i_n}$ results in a sum of terms, where one term is obtained by multiplying $m$ by ${\hat{q}}_1$ (aka ``producing a new ${\hat{q}}_1$''), the other terms are obtained by raising by one one of the indices of one of the ${\hat{q}}_{i_j}'s$.
Now let us imagine performing successive applications of $z\vardel_z$ to $e^{{\hat{q}}_0}$ mantaining at all times all summands distinct, and let us call ``level $i$" the $i$-th application of the operator. 
We want to put in bijection the number of times a term of the form $e^{{\hat{q}}_0} {\hat{q}}_{i_1}\ldots {\hat{q}}_{i_n}$ appears (necessarily at level $d = i_1+\ldots i_n$) with the number of partitions of the set $[d]$ in $n$ subsets of cardinality $i_1$, \ldots $i_n$. For each variable ${\hat{q}}_{i_j}$, there are $i_j$ distinct levels where the index of such variable has been increased. We associate to such variable the subset of such levels, and by running over all variables that are appearing we obtain a partition of $[d]$. It is at this point easy to see that this construction realizes the bijection we desire.
We illustrate our proof in a simple example. Consider $(z\vardel_z)^3e^{{\hat{q}}_0} = 3e^{{\hat{q}}_0} {\hat{q}}_2{\hat{q}}_1+ \ldots$ and let us observe that $3$ is obtained as the ways to partition the set $[3]$ into two subsets of size $2$ and $1$. First we write out all levels up to $3$:
\begin{align*}
Level \ 1: &   e^{\hat{q}}_0 {\hat{q}}_1 \\
Level \ 2: &   e^{\hat{q}}_0 \bar{\hat{q}}_1 {\hat{q}}_1+  e^{\hat{q}}_0 {\hat{q}}_2 \\
Level \ 3: &   e^{\hat{q}}_0 \bar{\bar{\hat{q}}}_1\bar{\hat{q}}_1 {\hat{q}}_1+ e^{\hat{q}}_0 \bar{\hat{q}}_2 {\hat{q}}_1+e^{\hat{q}}_0 \bar{\hat{q}}_1 {\hat{q}}_2+  e^{\hat{q}}_0\bar{\bar{\hat{q}}}_1 {\hat{q}}_2+ e^{\hat{q}}_0 {\hat{q}}_3 \\
\end{align*}
The bars over the variables are purely combinatorial decorations that keep track of at which level a given variable appears. We then observe that the monomials that reduce to $e^{{\hat{q}}_0}{\hat{q}}_2{\hat{q}}_1$ when forgetting the decorations are in bijection with two part non-trivial partitions of $[3]$ where the singleton corresponds to the level where the variable ${\hat{q}}_1$ appears, and the two part subset corresponds to two levels: the level where the variable first appears as a ${\hat{q}}_1$ and the level where its index is raised by one.
\end{proof}

\begin{lemma}  \label{bellch}
With notation as in Lemma \ref{bell}, for any nonnegative integers $n, d$  we have:
\begin{equation}
\sum_{i=0}^\infty i^d \hat{t}_i{\vardel_{\hat{t}_{i+n}}} = \sum_{m=0}^\infty \left[B_d ({\hat{q}}_1, \ldots, {\hat{q}}_d)\right]_{z^m}\vardel_{\hat{p}_{m+n}}.
\end{equation}
\end{lemma}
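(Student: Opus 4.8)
The plan is to combine the Jacobian structure established in Lemma \ref{bell} with the same generating-series manipulation used to handle $\hat{L}_0$ in Section \ref{covdil}. First I would write
\[
\sum_{i=0}^\infty i^d \hat{t}_i \vardel_{\hat{t}_{i+n}} = \sum_{i=0}^\infty i^d \hat{t}_i \sum_{m=0}^\infty \frac{\vardel \hat{p}_m}{\vardel \hat{t}_{i+n}} \vardel_{\hat{p}_m},
\]
and then use the key observation from \eqref{parder}: the partial derivative $\vardel \hat{p}_m/\vardel \hat{t}_{i+n}$ depends only on the difference $m-(i+n)$, so the Jacobian is upper-triangular and constant along diagonals. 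Consequently the coefficient of $\vardel_{\hat{p}_m}$ in the operator is the coefficient of $z^m$ in the product of generating series
\[
\left(\sum_{i=0}^\infty i^d \hat{t}_i z^i\right)\left(\sum_{j} \frac{\vardel \hat{p}_j}{\vardel \hat{t}_{i+n}} z^{j-i-n}\right) z^n = z^n \left(\sum_{i=0}^\infty i^d \hat{t}_i z^i\right) e^{-\sum_k \hat{p}_k z^k},
\]
where I have used \eqref{parder} in the index-shifted form to identify the second factor (up to the power $z^n$) with $e^{-{\hat q}_0}$.

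Next I would invoke Lemma \ref{bell}: applying $(z\vardel_z)^d$ to \eqref{schurcov} gives $\sum_i i^d \hat{t}_i z^i = e^{{\hat q}_0} B_d({\hat q}_1, \ldots, {\hat q}_d)$. Substituting this into the product above, the two exponentials $e^{{\hat q}_0} e^{-{\hat q}_0}$ cancel, leaving $z^n B_d({\hat q}_1, \ldots, {\hat q}_d)$. Extracting the coefficient of $z^m$ and reindexing (the factor $z^n$ shifts $[B_d]_{z^{m}}$ to $[B_d]_{z^{m-n}}$, equivalently relabel the outer sum) yields exactly
\[
\sum_{i=0}^\infty i^d \hat{t}_i \vardel_{\hat{t}_{i+n}} = \sum_{m=0}^\infty \left[B_d({\hat q}_1, \ldots, {\hat q}_d)\right]_{z^m} \vardel_{\hat{p}_{m+n}},
\]
which is the claimed identity. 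The case $n=0$, $d=1$ recovers the computation $\sum_i i\hat{t}_i \vardel_{\hat{t}_i} = \sum_m m\hat{p}_m \vardel_{\hat{p}_m}$ already carried out in Section \ref{covdil}, so this is a genuine generalization of that warm-up.

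The only delicate point is bookkeeping with the index shift by $n$: one must be careful that $\vardel \hat{p}_m / \vardel \hat{t}_{i+n}$ is the coefficient of $z^{m-i-n}$ in $e^{-{\hat q}_0}$, so that after multiplying by $\hat{t}_i z^i$ and the compensating $z^n$ the net generating-series identity is clean. A secondary subtlety is convergence/well-definedness of the formal manipulation of these countably-variabled series, but this is handled exactly as in the proof of the $\hat{L}_0$ case: all series involved are formal power series in $z$ whose coefficients are polynomials in finitely many $\hat{p}_k$, so term-by-term extraction of $z^m$-coefficients is legitimate. I expect the main (modest) obstacle is simply presenting the index shift transparently rather than any substantive difficulty, since Lemma \ref{bell} does all the real work.
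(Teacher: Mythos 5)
Your proposal is correct and follows essentially the same route as the paper's proof: both use the diagonal-constancy of the Jacobian from \eqref{parder} to reduce the chain rule to a product of generating series, invoke Lemma \ref{bell} to write $\sum_i i^d\hat{t}_i z^i = e^{\hat{q}_0}B_d(\hat{q}_1,\ldots,\hat{q}_d)$, cancel the exponentials, and reindex to absorb the shift by $n$ (your extraction of the $z^m$-coefficient after multiplying by $z^n$ is just the paper's extraction of the $z^{m-n}$-coefficient in disguise). No gaps.
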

\begin{proof}
In  analogy to the computations in Section \ref{covdil}, we have
\begin{align*}\label{genser2}
\sum_{i=0}^\infty i^d \hat{t}_{i}{\vardel_{\hat{t}_{i+n}}} &=\sum_{m=0}^\infty\left[(z\vardel_z)^d e^{\hat{q}_0}\left(\sum_{j=0}^\infty \frac{\vardel{\hat{p}_{j}}}{\vardel{\hat{t}_{i+n}}}z^{j-i-n}\right)\right]_{z^{m-n}}\vardel_{\hat{p}_m} \\
& = \sum_{m=0}^\infty \left[e^{{\hat{q}}_0} B_d ({\hat{q}}_1, \ldots, {\hat{q}}_d) e^{-{\hat{q}}_0}\right]_{z^{m-n}}\vardel_{\hat{p}_m},
\end{align*}
and the result follows from canceling the exponential terms and reindexing the summation.
\end{proof}

Lemma \ref{bellch}  allows to perform the change of variables for the $u^0$ coefficient of the operators $\hat{L}_n$.

\begin{lemma}\label{genusg}
For $n\geq 0$, let $ \left(i+\frac{3}{2}\right)\left(i+\frac{5}{2}\right)\cdots \left(i+\frac{2n+3}{2}\right)=\sum_{d=0}^{n+1} \alpha_{n,d}i^d$. Then:
\begin{align}
[\hat{L}_n]_{u^0} = \sum_{d=0}^{n+1} \alpha_{n,d} \left( \sum_{m=0}^\infty \left[B_d ({\hat{q}}_1, \ldots, {\hat{q}}_d)\right]_{z^m}\vardel_{\hat{p}_{m+n}} \right) .
\end{align}
\end{lemma}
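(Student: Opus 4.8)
The plan is to recognize $[\hat L_n]_{u^0}$ as a $\bZ$-linear combination of the ``power-weighted'' operators $\sum_{i=0}^\infty i^d\,\hat t_i\,\vardel_{\hat t_{i+n}}$ that appear in Lemma \ref{bellch}, with the combination coefficients being precisely the $\alpha_{n,d}$ of the statement, and then to invoke Lemma \ref{bellch} term by term. First I would read $[\hat L_n]_{u^0}$ off from Lemma \ref{lem:ann}: for every $n\geq 1$ the summands of $\hat L_n$ not carrying a factor of $u^2$ already form a single operator
\[
[\hat L_n]_{u^0} \;=\; \sum_{i=0}^\infty \frac{(2i+2n+3)!!}{(2i+1)!!\,2^{n+1}}\,\hat t_i\,\vardel_{\hat t_{i+n}},
\]
and for $n=0$ the same expression holds up to the additive constant $\tfrac1{16}$, which is not a differential operator and was already carried along separately in Section \ref{covdil}. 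In particular the coefficients $\tfrac{2i+3}{2}$, $\tfrac{(2i+3)(2i+5)}{4}$, $\tfrac{(2i+3)(2i+5)(2i+7)}{8}$ displayed for $n=0,1,2$ in Lemma \ref{lem:ann} are exactly the $n=0,1,2$ instances of $\tfrac{(2i+2n+3)!!}{(2i+1)!!\,2^{n+1}}$.

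Next I would simplify this coefficient. Cancelling the common tail $(2i+1)!!$ gives
\[
\frac{(2i+2n+3)!!}{(2i+1)!!} \;=\; (2i+3)(2i+5)\cdots(2i+2n+3),
\]
a product of the $n+1$ consecutive odd integers from $2i+3$ to $2i+2n+3$; dividing by $2^{n+1}$ distributes one factor of $\tfrac12$ into each factor, yielding $\bigl(i+\tfrac32\bigr)\bigl(i+\tfrac52\bigr)\cdots\bigl(i+\tfrac{2n+3}{2}\bigr)$. By the very definition of the $\alpha_{n,d}$ in the statement, this equals $\sum_{d=0}^{n+1}\alpha_{n,d}\,i^d$, so $\tfrac{(2i+2n+3)!!}{(2i+1)!!\,2^{n+1}} = \sum_{d=0}^{n+1}\alpha_{n,d}\,i^d$ identically in $i$.

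Finally I would substitute and interchange the finite sum over $d$ with the sum over $i$:
\[
[\hat L_n]_{u^0} \;=\; \sum_{i=0}^\infty\Bigl(\sum_{d=0}^{n+1}\alpha_{n,d}\,i^d\Bigr)\hat t_i\,\vardel_{\hat t_{i+n}} \;=\; \sum_{d=0}^{n+1}\alpha_{n,d}\sum_{i=0}^\infty i^d\,\hat t_i\,\vardel_{\hat t_{i+n}},
\]
and apply Lemma \ref{bellch} to each inner operator, replacing $\sum_{i=0}^\infty i^d\,\hat t_i\,\vardel_{\hat t_{i+n}}$ by $\sum_{m=0}^\infty\bigl[B_d({\hat q}_1,\dots,{\hat q}_d)\bigr]_{z^m}\vardel_{\hat p_{m+n}}$. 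This is precisely the claimed formula.

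I do not expect a genuine obstacle here: the argument is the routine assembly of the double-factorial identity (the middle step) with Lemma \ref{bellch}. The only place calling for a bit of care is the book-keeping around the low-$n$ special forms of Lemma \ref{lem:ann} --- one must confirm that, once the $u^2$ pieces (and, for $n=0$, the additive constant $\tfrac1{16}$) are set aside, the surviving $u^0$ operator really does have the uniform shape $\sum_{i=0}^\infty c_{n,i}\,\hat t_i\,\vardel_{\hat t_{i+n}}$ with $c_{n,i}=\tfrac{(2i+2n+3)!!}{(2i+1)!!\,2^{n+1}}$.
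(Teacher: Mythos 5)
Your proposal is correct and is essentially the argument the paper intends: the paper states Lemma \ref{genusg} as an immediate consequence of Lemma \ref{bellch}, precisely by expanding $\frac{(2i+2n+3)!!}{(2i+1)!!\,2^{n+1}}=\bigl(i+\tfrac32\bigr)\cdots\bigl(i+\tfrac{2n+3}{2}\bigr)=\sum_d\alpha_{n,d}i^d$ in the $u^0$ part of Lemma \ref{lem:ann} and applying Lemma \ref{bellch} term by term. Your remark about setting aside the additive constant $\tfrac1{16}$ for $n=0$ matches how the paper treats that case (cf.\ the warm-up computation \eqref{dilop}), so there is nothing to correct.
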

 We now turn our attention to the $u^2$ coefficient. 

\subsection{ The $u^2$ coefficient: $n\geq 2$.}

We recall that from \eqref{parder} we can express:
\begin{equation}\label{firststep}
\vardel_{\hat{t}_{i}} = \sum_{j=0}^\infty \frac{\vardel \hat{p}_j}{\vardel \hat{t}_{i}} \vardel_{ \hat{p}_j}= e^{-\hat{p}_0}\sum_{j=0}^\infty  S_{j-i}(-\hat{\mathbf{p}})\vardel_{ \hat{p}_{j}}.
\end{equation}

It will be useful to take the partial derivative of \eqref{parder} with respect to a variable $\hat{p}_m$ to obtain:
 \begin{equation}\label{parder2}
\sum_{j=0}^\infty \frac{\vardel^2 \hat{p}_j}{\vardel \hat{p}_{m}\vardel \hat{t}_{i}} z^{j-i-m} = -e^{-\sum_{k=0}^\infty \hat{p}_{k} z^{k}}. 
\end{equation}

We may now use generating functions to perform the change of variables for an operator of second order:
\begin{align}\nonumber 
 \vardel_{\hat{t}_{l}} \vardel_{\hat{t}_{i}} &= \left( \sum_{m=0}^\infty \frac{\vardel \hat{p}_m}{\vardel \hat{t}_{l}} \vardel_{ \hat{p}_m}\right)\left( \sum_{j=0}^\infty \frac{\vardel \hat{p}_j}{\vardel \hat{t}_{i}} \vardel_{ \hat{p}_j}\right)\\\nonumber 
 & =  \sum_{j,m=0}^\infty \frac{\vardel \hat{p}_m}{\vardel \hat{t}_{l}} \frac{\vardel \hat{p}_j}{\vardel \hat{t}_{i}}  \vardel_{ \hat{p}_m} \vardel_{ \hat{p}_j}+ \sum_{m=0}^\infty \frac{\vardel \hat{p}_m}{\vardel \hat{t}_{l}}\sum_{j=0}^\infty \frac{\vardel^2 \hat{p}_j}{\vardel \hat{p}_{m}\vardel \hat{t}_{i}}\vardel_{ \hat{p}_j}\\\nonumber 
 & =  \sum_{j,m=0}^\infty e^{-2\hat{p}_0} S_{m-l}(-\hat{\mathbf{p}})  S_{j-i}(-\hat{\mathbf{p}}) \vardel_{ \hat{p}_m} \vardel_{ \hat{p}_j}+\\\nonumber 
 & + \sum_{j=0}^\infty    \left[\left(e^{-\sum_{k=0}^\infty \hat{p}_{k} z^{k}}\right)\left(-e^{-\sum_{k=0}^\infty \hat{p}_{k} z^{k}}\right)\right]_{z^{j-i-l}}  \vardel_{ \hat{p}_j}\\
 & =  e^{-2\hat{p}_0} \left( \sum_{j,m=0}^\infty S_{m-l}(-\hat{\mathbf{p}})  S_{j-i}(-\hat{\mathbf{p}}) \vardel_{ \hat{p}_m} \vardel_{ \hat{p}_j} - \sum_{j=0}^\infty    S_{j-i-l}(-2\hat{\mathbf{p}})  \vardel_{ \hat{p}_j}\right). \label{almost}
\end{align}

The last expression that needs to be addressed to complete the change of variables for $\hat{L}_n$, $n\geq 3$ is:
\begin{align}
  \sum_{i=0}^\infty \hat{t}_{i+1}  \vardel_{\hat{t}_{i}}\vardel_{\hat{t}_{n-2}}  &= \sum_{i=0}^\infty\left( \sum_{m=0}^\infty  \hat{t}_{i+1} \frac{\vardel \hat{p}_m}{\vardel \hat{t}_{i}}  \nonumber \vardel_{ \hat{p}_m}\right)\left( \sum_{j=0}^\infty \frac{\vardel \hat{p}_j}{\vardel \hat{t}_{n-2}} \vardel_{ \hat{p}_j}\right)\\ \nonumber 
  &=  \sum_{i,j,m=0}^\infty   \hat{t}_{i+1} \frac{\vardel \hat{p}_m}{\vardel \hat{t}_{i}} \frac{\vardel \hat{p}_j}{\vardel \hat{t}_{n-2}} \vardel_{ \hat{p}_m} \vardel_{ \hat{p}_j} + \sum_{i,m=0}^\infty \hat{t}_{i+1}\frac{\vardel \hat{p}_m}{\vardel \hat{t}_{i}}\sum_{j=0}^\infty  \frac{\vardel^2 \hat{p}_j}{\vardel \hat{p}_{m}\vardel \hat{t}_{n-2}}\vardel_{ \hat{p}_j}\\\nonumber 
  &=  \sum_{j,m=0}^\infty    \left[\left(e^{\sum_{k=0}^\infty \hat{p}_{k} z^{k}}- e^{\hat{p}_0}\right)\left(e^{-\sum_{k=0}^\infty \hat{p}_{k} z^{k}}\right)\right]_{z^{m+1}} e^{-\hat{p_0}} S_{j-n+2}(-\hat{\mathbf{p}}) \vardel_{ \hat{p}_m} \vardel_{ \hat{p}_j}\\\nonumber 
   &+  \sum_{j=0}^\infty    \left[\left(e^{\sum_{k=0}^\infty \hat{p}_{k} z^{k}}- e^{\hat{p}_0}\right)\left(e^{-\sum_{k=0}^\infty \hat{p}_{k} z^{k}}\right)\left(-e^{-\sum_{k=0}^\infty \hat{p}_{k} z^{k}}\right)\right]_{z^{j-n+3}} \vardel_{ \hat{p}_j}\\
 &=   e^{-\hat{p_0}}\left(\sum_{j,m=0}^\infty  - S_{m+1}(-\hat{\mathbf{p}}) S_{j-n+2}(-\hat{\mathbf{p}}) \vardel_{ \hat{p}_m} \vardel_{ \hat{p}_j}+  \sum_{j=0}^\infty \left(S_{j-n+3}(-2\hat{\mathbf{p}})- S_{j-n+3}(-\hat{\mathbf{p}}) \right)  \vardel_{ \hat{p}_j}\right). \label{lastpiece}
\end{align}

To deal with the case $n=2$, we compute:
\begin{align}
  \sum_{i=0}^\infty \hat{t}_{i+1}  \vardel_{\hat{t}_{i}} &= \sum_{i=0}^\infty\left( \sum_{m=0}^\infty  \hat{t}_{i+1} \frac{\vardel \hat{p}_m}{\vardel \hat{t}_{i}}  \nonumber \vardel_{ \hat{p}_m}\right)\\ \nonumber 
  &=  \sum_{m=0}^\infty    \left[\left(e^{\sum_{k=0}^\infty \hat{p}_{k} z^{k}}- e^{\hat{p}_0}\right)\left(e^{-\sum_{k=0}^\infty \hat{p}_{k} z^{k}}\right)\right]_{z^{m+1}}  \vardel_{ \hat{p}_m} \\
 &=  \sum_{m=0}^\infty  - S_{m+1}(-\hat{\mathbf{p}})  \vardel_{ \hat{p}_m}. \label{neqtwo}
\end{align}

Combining \eqref{firststep}, \eqref{almost}, \eqref{lastpiece}, \eqref{neqtwo} and appropriately reindexing we obtain the following.

\begin{lemma}\label{gminusonegen}
For $n\geq 2$
\begin{align}
[\hat{L}_n]_{u^2} = & -\frac{ e^{-2\hat{p}_0}}{2}\left(\sum_{i=0}^{n-1}\frac{(2i+1)!!(2n-2i-1)!!}{2^{n+1}}\right) \left(\sum_{j=0}^\infty    S_{j}(-2\hat{\mathbf{p}})  \vardel_{ \hat{p}_{j+n-3}}\right)+ \nonumber\\
 &+ \frac{ e^{-2\hat{p}_0}}{2}\left(\sum_{i=0}^{n-1}\frac{(2i+1)!!(2n-2i-1)!!}{2^{n+1}} \sum_{j,m=0}^\infty S_{m}(-\hat{\mathbf{p}})  S_{j}(-\hat{\mathbf{p}}) \vardel_{ \hat{p}_{m+n-2-i}} \vardel_{ \hat{p}_{j+i-1}}  \right).
\end{align}
\end{lemma}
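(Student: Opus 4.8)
The proof is a term-by-term substitution of the change-of-variables formulas \eqref{firststep}, \eqref{almost}, \eqref{lastpiece} (and \eqref{neqtwo} for $n=2$) into the $u^2$-part of $\hat{L}_n$ as recorded in Lemma \ref{lem:ann}, followed by reindexing. Throughout one uses that $\hat{t}_0 = e^{\hat{p}_0}S_0(\hat{\mathbf{p}}) = e^{\hat{p}_0}$, so $1/\hat{t}_0^{k} = e^{-k\hat{p}_0}$ and every prefactor that appears below is $e^{-2\hat{p}_0}$. For $n\ge 3$ the $u^2$-coefficient is
\[
[\hat{L}_n]_{u^2} \;=\; \frac12\sum_{i=0}^{n-3}\frac{(2i+3)!!\,(2n-2i-3)!!}{2^{n+1}}\,\vardel_{\hat{t}_i}\vardel_{\hat{t}_{n-3-i}}\;-\;\frac{(2n-1)!!}{2^{n+1}}\,\frac{1}{\hat{t}_0}\Bigl(\vardel_{\hat{t}_{n-3}}+\sum_{i=0}^\infty \hat{t}_{i+1}\vardel_{\hat{t}_i}\vardel_{\hat{t}_{n-2}}\Bigr),
\]
and the plan is to feed \eqref{almost} (with first index $i$ and second index $n-3-i$, whose linear tail is $-S_{j-n+3}(-2\hat{\mathbf{p}})$ since $i+(n-3-i)=n-3$) into the first sum, \eqref{firststep} into $\vardel_{\hat{t}_{n-3}}$, and \eqref{lastpiece} into the mixed sum $\sum_i\hat{t}_{i+1}\vardel_{\hat{t}_i}\vardel_{\hat{t}_{n-2}}$. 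The case $n=2$ runs identically once one expands the alternative expression $\tfrac{3u^2}{8\hat{t}_0^2}\sum_i\hat{t}_{i+1}(1-\hat{t}_0\vardel_{\hat{t}_0})\vardel_{\hat{t}_i}$ from Lemma \ref{lem:ann}, applying \eqref{neqtwo} to $\sum_i\hat{t}_{i+1}\vardel_{\hat{t}_i}$ and \eqref{lastpiece} with $n=2$ to $\sum_i\hat{t}_{i+1}\vardel_{\hat{t}_i}\vardel_{\hat{t}_0}$; one may also run $n=2$ through the general formula above directly as a consistency check.

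After substitution the single-derivative ($\vardel_{\hat{p}_j}$) terms come from the linear tail of \eqref{almost}, from $\vardel_{\hat{t}_{n-3}}$, and from the linear tail $\sum_j\bigl(S_{j-n+3}(-2\hat{\mathbf{p}})-S_{j-n+3}(-\hat{\mathbf{p}})\bigr)\vardel_{\hat{p}_j}$ of \eqref{lastpiece}. The contribution of $\vardel_{\hat{t}_{n-3}}$ cancels exactly against the $-S_{j-n+3}(-\hat{\mathbf{p}})$ piece of \eqref{lastpiece} (both carry the coefficient $(2n-1)!!/2^{n+1}$), leaving only terms in $S_{j-n+3}(-2\hat{\mathbf{p}})\vardel_{\hat{p}_j}$ with total coefficient $-e^{-2\hat{p}_0}\bigl(\tfrac12\sum_{i=0}^{n-3}\tfrac{(2i+3)!!(2n-2i-3)!!}{2^{n+1}}+\tfrac{(2n-1)!!}{2^{n+1}}\bigr)$. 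The combinatorial identity to verify is
\[
\frac12\sum_{i=0}^{n-3}\frac{(2i+3)!!\,(2n-2i-3)!!}{2^{n+1}}+\frac{(2n-1)!!}{2^{n+1}} \;=\; \frac12\sum_{i=0}^{n-1}\frac{(2i+1)!!\,(2n-2i-1)!!}{2^{n+1}},
\]
which follows by reindexing $i\mapsto i+1$ on the left to get $\sum_{i=1}^{n-2}(2i+1)!!(2n-2i-1)!!$ and observing that the endpoint terms $i=0$ and $i=n-1$ of the symmetric sum on the right each equal $(2n-1)!!$. Reindexing $j\mapsto j+n-3$ then rewrites the single-derivative part as $-\tfrac{e^{-2\hat{p}_0}}{2}\bigl(\sum_{i=0}^{n-1}\tfrac{(2i+1)!!(2n-2i-1)!!}{2^{n+1}}\bigr)\sum_m S_m(-2\hat{\mathbf{p}})\vardel_{\hat{p}_{m+n-3}}$, the first line of the statement.

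For the quadratic terms the organizing tool is the symmetry of $S_m(-\hat{\mathbf{p}})S_j(-\hat{\mathbf{p}})$ under $m\leftrightarrow j$. Writing $c_i := (2i+1)!!(2n-2i-1)!!/2^{n+1}$, the substitution $m\mapsto m-n+2+i$, $j\mapsto j-i+1$ turns the term of index $i$ in the asserted quadratic sum into $\tfrac12 c_i\sum_{m,j}S_{m-n+2+i}(-\hat{\mathbf{p}})S_{j-i+1}(-\hat{\mathbf{p}})\vardel_{\hat{p}_m}\vardel_{\hat{p}_j}$. One now matches: the quadratic part of \eqref{almost} applied to $\vardel_{\hat{t}_i}\vardel_{\hat{t}_{n-3-i}}$ has Schur factor $S_{m-i}(-\hat{\mathbf{p}})S_{j-n+3+i}(-\hat{\mathbf{p}})$ and weight $\tfrac12(2i+3)!!(2n-2i-3)!!/2^{n+1}=\tfrac12 c_{n-2-i}$, so it is exactly the term of index $n-2-i$, and as $i$ ranges over $0,\dots,n-3$ these account for the indices $1,\dots,n-2$; the surviving quadratic part of \eqref{lastpiece}, namely $+\tfrac{(2n-1)!!}{2^{n+1}}e^{-2\hat{p}_0}\sum_{m,j}S_{m+1}(-\hat{\mathbf{p}})S_{j-n+2}(-\hat{\mathbf{p}})\vardel_{\hat{p}_m}\vardel_{\hat{p}_j}$ accounts for the endpoint indices $i=0$ and $i=n-1$, whose Schur patterns $S_{m-n+2}S_{j+1}$ and $S_{m+1}S_{j-n+2}$ coincide after swapping $m\leftrightarrow j$, so their half-weights $\tfrac12 c_0=\tfrac12 c_{n-1}=(2n-1)!!/2^{n+2}$ combine to the needed $(2n-1)!!/2^{n+1}$. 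Adding the single- and double-derivative pieces yields the asserted formula. I expect the only genuine obstacle to be bookkeeping: keeping the index shifts in the $\vardel_{\hat{p}}$-subscripts consistent across the four substituted identities, respecting the conventions $S_k=0$ and $\vardel_{\hat{p}_k}=0$ for $k<0$ in the small cases $n=2,3$ (where $n-3$ or $i-1$ may be negative), and correctly tracking the halving and doubling of weights produced by symmetrizing in $m$ and $j$.
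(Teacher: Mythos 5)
Your proposal is correct and follows exactly the route the paper takes (the paper's proof is the one-line ``combining \eqref{firststep}, \eqref{almost}, \eqref{lastpiece}, \eqref{neqtwo} and appropriately reindexing''): substitute those identities into the $u^2$-part of $\hat{L}_n$ from Lemma \ref{lem:ann}, cancel the $S_{j-n+3}(-\hat{\mathbf{p}})$ terms, and reindex using the symmetry $c_i=c_{n-1-i}$ of the double-factorial weights. Your verification of the identity $\frac12\sum_{i=0}^{n-3}(2i+3)!!(2n-2i-3)!!+(2n-1)!!=\frac12\sum_{i=0}^{n-1}(2i+1)!!(2n-2i-1)!!$ and the endpoint bookkeeping for $i=0,\,n-1$ supply exactly the details the paper leaves implicit.
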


\subsection{ The $u^2$ coefficient: $n=1$.}
In order to compute $\hat{L}_1$ we must perform the following computations:\\

\begin{align}
  \sum_{i=0}^\infty \hat{t}_{i+2}  \vardel_{\hat{t}_{i}} &= \sum_{i=0}^\infty\left( \sum_{m=0}^\infty  \hat{t}_{i+2} \frac{\vardel \hat{p}_m}{\vardel \hat{t}_{i}}   \vardel_{ \hat{p}_m}\right) =\nonumber\\ \nonumber 
  &=  \sum_{m=0}^\infty    \left[\left(e^{\sum_{k=0}^\infty \hat{p}_{k} z^{k}}- e^{\hat{p}_0}(1+\hat{p}_1z)\right)\left(e^{-\sum_{k=0}^\infty \hat{p}_{k} z^{k}}\right)\right]_{z^{m+2}}  \vardel_{ \hat{p}_m} \\
   &=\sum_{m=0}^\infty S_{1}(-\hat{\mathbf{p}}) S_{m+1}(-\hat{\mathbf{p}})  - S_{m+2}(-\hat{\mathbf{p}})  \vardel_{ \hat{p}_m}. \label{loneone}
  \end{align}

\begin{equation}
  \sum_{i=0}^\infty\frac{\hat{t}_{1}\hat{t}_{i+1}}{\hat{t}_{0}}  \vardel_{\hat{t}_{i}} =  \sum_{m=0}^\infty   S_{1}(-\hat{\mathbf{p}}) S_{m+1}(-\hat{\mathbf{p}})  \vardel_{ \hat{p}_m}.\label{lonetwo}
  \end{equation}

\begin{align}
  \sum_{i,j=0}^\infty \hat{t}_{i+1} \hat{t}_{j+1} \vardel_{\hat{t}_{i}}\vardel_{\hat{t}_{j}}  &= \sum_{i,j=0}^\infty \hat{t}_{i+1} \hat{t}_{j+1}\left( \sum_{l=0}^\infty  \frac{\vardel \hat{p}_l}{\vardel \hat{t}_{i}}  \nonumber \vardel_{ \hat{p}_l}\right)\left( \sum_{m=0}^\infty \frac{\vardel \hat{p}_m}{\vardel \hat{t}_{j}} \vardel_{ \hat{p}_m}\right)\\ \nonumber 
  &\hspace{-3cm}=  \sum_{i,j,l,m=0}^\infty   \hat{t}_{i+1} \hat{t}_{j+1}  \frac{\vardel \hat{p}_l}{\vardel \hat{t}_{i}} \frac{\vardel \hat{p}_m}{\vardel \hat{t}_{j}} \vardel_{ \hat{p}_l} \vardel_{ \hat{p}_m} + \sum_{i,j,l=0}^\infty \hat{t}_{i+1}\hat{t}_{j+1}\frac{\vardel \hat{p}_l}{\vardel \hat{t}_{i}}\sum_{m=0}^\infty  \frac{\vardel^2 \hat{p}_m}{\vardel \hat{p}_{l}\vardel \hat{t}_{j}}\vardel_{ \hat{p}_m}\\\nonumber 
  &\hspace{-3cm}=  \sum_{l,m=0}^\infty   \left[\left(e^{\sum_{k=0}^\infty \hat{p}_{k} z^{k}}- e^{\hat{p}_0}\right)\left(e^{-\sum_{k=0}^\infty \hat{p}_{k} z^{k}}\right)\right]_{z^{l+1}} 
    \left[\left(e^{\sum_{k=0}^\infty \hat{p}_{k} z^{k}}- e^{\hat{p}_0}\right)\left(e^{-\sum_{k=0}^\infty \hat{p}_{k} z^{k}}\right)\right]_{z^{m+1}}  \vardel_{ \hat{p}_l} \vardel_{ \hat{p}_m}\\\nonumber 
   &\hspace{-3cm}+  \sum_{m=0}^\infty    \left[\left(e^{\sum_{k=0}^\infty \hat{p}_{k} z^{k}}- e^{\hat{p}_0}\right)^2\left(e^{-\sum_{k=0}^\infty \hat{p}_{k} z^{k}}\right)\left(-e^{-\sum_{k=0}^\infty \hat{p}_{k} z^{k}}\right)\right]_{z^{m+2}} \vardel_{ \hat{p}_m}\\
 &\hspace{-3cm}=   \left(\sum_{l,m=0}^\infty   S_{l+1}(-\hat{\mathbf{p}}) S_{m+1}(-\hat{\mathbf{p}}) \vardel_{ \hat{p}_l} \vardel_{ \hat{p}_m}+  \sum_{m=0}^\infty \left(2S_{m+2}(-\hat{\mathbf{p}})- S_{m+2}(-2\hat{\mathbf{p}}) \right)  \vardel_{ \hat{p}_m}\right). \label{lone}
\end{align}

Combining the results of \eqref{loneone}, \eqref{lonetwo}, \eqref{lone}, we obtain the following.
\begin{lemma}\label{oneone}
\begin{align}
[\hat{L}_1]_{u^2} &=   \frac{e^{-2\hat{p}_0}}{8} \left( \sum_{m=0}^\infty \left(S_{m+2}(-\hat{\mathbf{p}})- S_{m+2}(-2\hat{\mathbf{p}})\right)  \vardel_{ \hat{p}_m}
  + \sum_{l,m=0}^\infty   S_{l+1}(-\hat{\mathbf{p}}) S_{m+1}(-\hat{\mathbf{p}}) \vardel_{ \hat{p}_l} \vardel_{ \hat{p}_m}\right).
\end{align}
\end{lemma}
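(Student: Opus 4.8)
The plan is to read the lemma straight off the $\hat t$-coordinate formula for $\hat L_1$ recorded in Lemma \ref{lem:ann} by substituting the three generating-function computations \eqref{loneone}, \eqref{lonetwo} and \eqref{lone} established just above. First I would extract the $u^2$-part of $\hat L_1$ from Lemma \ref{lem:ann}:
\begin{equation*}
[\hat L_1]_{u^2} = \frac{1}{8\hat t_0^2}\left(\sum_{i=0}^\infty\Bigl(\hat t_{i+2}-\tfrac{\hat t_1\hat t_{i+1}}{\hat t_0}\Bigr)\vardel_{\hat t_i} + \sum_{i,j=0}^\infty \hat t_{i+1}\hat t_{j+1}\vardel_{\hat t_i}\vardel_{\hat t_j}\right),
\end{equation*}
and observe that the change of variables \eqref{schurcov} gives $\hat t_0 = e^{\hat p_0}S_0(\hat{\mathbf{p}}) = e^{\hat p_0}$, so the prefactor $1/\hat t_0^2$ becomes $e^{-2\hat p_0}$; this is the single exponential that appears out front in the statement.

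Next I would substitute the three identities and collect terms. The first-order operator $\sum_i(\hat t_{i+2}-\hat t_1\hat t_{i+1}/\hat t_0)\vardel_{\hat t_i}$ is the difference of \eqref{loneone} and \eqref{lonetwo}, and in that difference the common summand $\sum_m S_1(-\hat{\mathbf{p}})S_{m+1}(-\hat{\mathbf{p}})\vardel_{\hat p_m}$ cancels, leaving $-\sum_m S_{m+2}(-\hat{\mathbf{p}})\vardel_{\hat p_m}$. Adding the second-order operator \eqref{lone}, whose first-order part is $\sum_m\bigl(2S_{m+2}(-\hat{\mathbf{p}})-S_{m+2}(-2\hat{\mathbf{p}})\bigr)\vardel_{\hat p_m}$ and whose genuinely second-order part is $\sum_{l,m}S_{l+1}(-\hat{\mathbf{p}})S_{m+1}(-\hat{\mathbf{p}})\vardel_{\hat p_l}\vardel_{\hat p_m}$, the first-order contributions combine as $-S_{m+2}(-\hat{\mathbf{p}})+2S_{m+2}(-\hat{\mathbf{p}})-S_{m+2}(-2\hat{\mathbf{p}}) = S_{m+2}(-\hat{\mathbf{p}})-S_{m+2}(-2\hat{\mathbf{p}})$, while the second-order term is carried through unchanged. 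Multiplying the resulting bracket by $e^{-2\hat p_0}/8$ produces precisely the formula of Lemma \ref{oneone}.

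The substance of the argument therefore lies in \eqref{loneone}--\eqref{lone}, which I would obtain exactly as the preceding subsection does: apply the chain rule to \eqref{schurcov}, using that the Jacobian entries $\vardel\hat p_j/\vardel\hat t_i$ are packaged by the generating series \eqref{parder} and their $\hat p_m$-derivatives by \eqref{parder2}, then read each coefficient as the relevant $z$-coefficient of a product of exponential series. The one delicate point is the first-order ($S_{m+2}(-2\hat{\mathbf{p}})$) contribution inside \eqref{lone}: the second application of the chain rule produces an extra term through the mixed derivative $\vardel^2\hat p_j/(\vardel\hat p_l\,\vardel\hat t_i)$, carrying the factor $-e^{-\sum_k\hat p_k z^k}$ of \eqref{parder2}, and keeping its sign and index shift straight under the final reindexing is the main place to be careful. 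Once \eqref{loneone}--\eqref{lone} are in hand, the assembly described above is purely mechanical, so I expect no conceptual obstacle beyond this bookkeeping.
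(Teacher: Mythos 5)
Your proposal is correct and follows essentially the same route as the paper: the paper's proof of this lemma is precisely the three chain-rule generating-function computations \eqref{loneone}, \eqref{lonetwo}, \eqref{lone} (obtained from \eqref{schurcov}, \eqref{parder}, \eqref{parder2}), which are then substituted into the $u^2$ part of $\hat{L}_1$ from Lemma \ref{lem:ann} with $\hat{t}_0 = e^{\hat{p}_0}$ and combined exactly as you describe. Your cancellation of the $S_1(-\hat{\mathbf{p}})S_{m+1}(-\hat{\mathbf{p}})$ terms and the tally $-S_{m+2}(-\hat{\mathbf{p}})+2S_{m+2}(-\hat{\mathbf{p}})-S_{m+2}(-2\hat{\mathbf{p}})$ reproduce the stated first-order coefficient, so nothing is missing.
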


\subsection{Proof of Theorem \ref{mainthm}}

The first part of Theorem \ref{mainthm} is proved by combining the results of Lemmas \ref{lem:ann}, \ref{genusg}, \ref{gminusonegen} and \ref{oneone} and switching back to the unhatted variables $p_k = -\hat{p}_k$. 
To prove that the recursions obtained from the vanishing of coefficients of $L_n(e^\calK)$ reconstruct $\calK$ from the ``unstable" term $1/24 p_0$, one observes that each $\hat{L}_n$ has a term of the form $A\vardel_{p_n}$, with $A\in \mathbb{Q}\smallsetminus 0$; this means that the vanishing of a coefficient of $L_n(e^\calK)$ compares the intersection number of a monomial $m$ containing $\kappa_n$ with a combination of other intersection numbers determined by the remaining terms of $\hat{L}_n$. A direct analysis of the remaining terms shows that the monomials that are compared to $m$ are either strictly shorter than $m$, or they correspond to intersection numbers on lower genus. This proves that any monomial can inductively be computed from the ``genus one" term  $1/24 p_0$.
 We illustrate this strategy for $g=2,3$ in the next section.
 



\section{Recursions for $\kappa$ Classes}\label{relations}
In this section we collect some of the relations among $\kappa$ classes that are produced by the vanishing of coefficients of $\hat{L}_n(e^\calK)$. We choose to exhibit a set of relations that inductively reconstruct all intersection numbers in genus $2$ and $3$.
Throughout this section we denote
$
[\kappa^I]_g:= \int_{\Moduli_g} \kappa^I. 
$
We extend this notation to the unstable term, and define 
$
[\kappa_0]_1 := \frac{1}{24}.
$
For $g\geq 2$, we only consider monomials with no factor of $\kappa_0$, since $[\kappa^I\kappa_0^n]_g = (2g-2)^n[\kappa^I]_g$. In genus $2$, we have the following:
$$
\begin{array}{rrl}
\left[\hat{L}_3 (e^\calK)\right]_1: &   [\kappa_3]_2 =&\frac{13}{630}[\kappa_0]_1+\frac{1}{210} [\kappa_0]_1^2 = \frac{1}{1152},\\ & & \\
\left[\hat{L}_1 (e^\calK)\right]_{p_2}: & [\kappa_2\kappa_1]_2 =&  \frac{48}{15} [\kappa_3]_2+ \frac{1}{30} [\kappa_0]_1 = \frac{1}{240},\\& & \\
\left[\hat{L}_1 (e^\calK)\right]_{p_1^2}: &  [\kappa_1^3]_2 =&  \frac{8}{3} [\kappa_2\kappa_1]_2-\frac{8}{15} [\kappa_3]_2+ \frac{1}{15}[\kappa_0]_1^2+\frac{1}{10}[\kappa_0]_1 = \frac{43}{2880}.\\
\end{array} 
$$

A set of reconstructing relations in genus $3$ is given by:
$$\hspace{-2cm}
\begin{array}{rrl}
\left[\hat{L}_6 (e^\calK)\right]_1: &  [\kappa_6]_3= &\frac{1}{99}[\kappa_3]_2+\frac{1}{1287}[\kappa_2\kappa_1]_2+\frac{1}{715}[\kappa_3]_2 [\kappa_0]_1,\\ & & \\
\left[\hat{L}_1 (e^\calK)\right]_{p_5}: &  [\kappa_5\kappa_1]_3=&12[\kappa_6]_3+\frac{1}{30}[\kappa_3]_2 ,\\ & & \\
\left[\hat{L}_2 (e^\calK)\right]_{p_4}: &   [\kappa_4\kappa_2]_3=&\frac{136}{7}[\kappa_6]_3+\frac{4}{35}[\kappa_3]_2+\frac{1}{35}[\kappa_3]_2[\kappa_0]_1,\\ & & \\
\left[\hat{L}_3 (e^\calK)\right]_{p_3}: &  

[\kappa_3^2]_3 = & \frac{136}{7}[\kappa_6]_3+ \frac{38}{315}[\kappa_3]_2+ \frac{1}{63}[\kappa_2\kappa_1]_2 - [\kappa_3]_2^2 + \frac{31}{630}[\kappa_3]_2[\kappa_0]_1  + \frac{1}{210}[\kappa_3]_2[\kappa_0]_1^2

 ,\\ & & \\
\left[\hat{L}_1 (e^\calK)\right]_{p_4p_1}: & [\kappa_4\kappa_1^2]_3= & -\frac{32}{15}[\kappa_6]_3+\frac{128}{15}[\kappa_5\kappa_1]_3+\frac{4}{3}[\kappa_4\kappa_2]_3+\frac{7}{30}[\kappa_3]_2+\frac{1}{30}[\kappa_2\kappa_1]_2+\frac{1}{15}[\kappa_3]_2[\kappa_0]_1 ,\\ & & \\
\left[\hat{L}_1 (e^\calK)\right]_{p_3p_2}: &  [\kappa_3\kappa_2\kappa_1]_3=&  -\frac{16}{5}[\kappa_6]_3 +\frac{28}{5}[\kappa_4\kappa_2]_3+\frac{16}{5}[\kappa_3^2]_3+\frac{1}{6}[\kappa_3]_2+\frac{1}{10}[\kappa_2\kappa_1]_2+\frac{16}{5}[\kappa_3]_2^2\\ & & \\ & &-[\kappa_3]_2[\kappa_2\kappa_1]_2+\frac{1}{30} [\kappa_3]_2[\kappa_0]_1,\\ & & \\
\left[\hat{L}_2 (e^\calK)\right]_{p_2^2}: &  [\kappa_2^3]_3=&-\frac{288}{35}[\kappa_6]_3+\frac{56}{5}[\kappa_4\kappa_2]_3+\frac{6}{35}[\kappa_3]_2+\frac{2}{7}[\kappa_2\kappa_1]_2+\frac{1}{35}[\kappa_3]_2[\kappa_0]_1+\frac{2}{35}[\kappa_2\kappa_1]_2[\kappa_0]_1 ,\\ & & \\
\left[\hat{L}_1 (e^\calK)\right]_{p_2^2p_1}: & [\kappa_2^2\kappa_1^2]_3= & -\frac{32}{15}[\kappa_5\kappa_1]_3-\frac{32}{15}[\kappa_4\kappa_2]_3+\frac{32}{5}[\kappa_3\kappa_2\kappa_1]_3+\frac{4}{3}[\kappa_2^3]_3+\frac{11}{30}[\kappa_3]_2+\frac{5}{6}[\kappa_2\kappa_1]_2 \\
& & \\
& &
+\frac{1}{15}[\kappa_1^3]_2+ \frac{32}{5}[\kappa_3]_2[\kappa_2\kappa_1]_2-2[\kappa_2\kappa_1]_2^2+\frac{1}{15} [\kappa_3]_2[\kappa_0]_1+\frac{1}{5}[\kappa_2\kappa_1]_2[\kappa_0]_1,\\ & & \\
\left[\hat{L}_1 (e^\calK)\right]_{p_3p_1^2}: &  [\kappa_3\kappa_1^3]_3=&-\frac{16}{5}[\kappa_5\kappa_1]_3-\frac{8}{15}[\kappa_3^2]_3 +\frac{28}{5}[\kappa_4\kappa_1^2]_3+\frac{8}{3}[\kappa_3\kappa_2\kappa_1]_3+\frac{29}{30}[\kappa_3]_2+\frac{8}{15}[\kappa_2\kappa_1]_2\\ & & \\
& &+\frac{1}{30}[\kappa_1^3]_2 -\frac{8}{15}[\kappa_3]_2^2+\frac{8}{3}[\kappa_3]_2[\kappa_2\kappa_1]_2-[\kappa_3]_2[\kappa_1^3]_2 +\frac{1}{2}[\kappa_3]_2[\kappa_0]_1 \\
\\ & & +\frac{2}{15}[\kappa_2\kappa_1]_2[\kappa_0]_1 +\frac{1}{15}[\kappa_3]_2[\kappa_0]_1^2, \\ & & \\
\left[\hat{L}_1 (e^\calK)\right]_{p_2p_1^3}: &  [\kappa_2\kappa_1^4]_3=&-\frac{16}{5}[\kappa_4\kappa_1^2]_3-\frac{8}{5}[\kappa_3\kappa_2\kappa_1]_3+\frac{16}{5}[\kappa_3\kappa_1^3]_3
+4[\kappa_2^2\kappa_1^2]_3+\frac{9}{10}[\kappa_3]_2+\frac{19}{5}[\kappa_2\kappa_1]_2 
\\ & & \\ &&
+\frac{29}{30}[\kappa_1^3]_2 -\frac{8}{5}[\kappa_3]_2[\kappa_2\kappa_1]_2+\frac{16}{5}[\kappa_3]_2[\kappa_1^3]_2+8[\kappa_2\kappa_1]_2^2-4[\kappa_2\kappa_1]_2[\kappa_1^3]_2
\\ & & \\ &&
+\frac{1}{5} [\kappa_3]_2[\kappa_0]_1+\frac{17}{10}[\kappa_2\kappa_1]_2[\kappa_0]_1+\frac{7}{30}[\kappa_1^3]_2[\kappa_0]_1+ \frac{1}{5}[\kappa_2\kappa_1]_2[\kappa_0]_1^2,\\ & & \\

\left[\hat{L}_1 (e^\calK)\right]_{p_1^5}: &  [\kappa_1^6]_3=&  -\frac{16}{3}[\kappa_3\kappa_1^3]_3 +
\frac{20}{3}[\kappa_2\kappa_1^4]_3+
\frac{17}{10}[\kappa_3]_2+\frac{35}{6}[\kappa_2\kappa_1]_2+12[\kappa_1^3]_2-\frac{16}{3}[\kappa_3]_2[\kappa_1^3]_2\\
& & \\
 & &  +\frac{80}{3}[\kappa_2\kappa_1]_2[\kappa_1^3]_2-10[\kappa_1^3]_2^2  +\frac{1}{3} [\kappa_3]_2[\kappa_0]_1+\frac{4}{3}[\kappa_2\kappa_1]_2[\kappa_0]_1+\frac{17}{3}[\kappa_1^3]_2[\kappa_0]_1. \\ & &  \\ & & +\frac{2}{3}[\kappa_1^3]_2[\kappa_0]_1^2
\end{array} 
$$




\bibliographystyle{amsalpha} 
\bibliography{biblio}
\end{document}